\newcommand{\defword}[1]{\textcolor{blue}{\em #1}}
\newcommand{\pd}[2]{\ensuremath{ \frac{ \partial #1 }{\partial #2}  }}
\DeclareMathOperator*{\argmin}{\arg\!\min}
\newtheorem{example}[theorem]{Example}
\newtheorem{conjecture}[theorem]{Conjecture}
\newtheorem{question}[theorem]{Question}
\newtheorem{assumption}[theorem]{Assumption}
\crefname{hypothesis}{Hypothesis}{Hypotheses}
\title{Multistability of Reaction Networks \\ with One-Dimensional Stoichiometric Subspaces\thanks{Submitted to the editors DATE.
\funding{XT was funded by the NSFC12001029.}
}}
\author{Xiaoxian Tang\thanks{School of Mathematical Sciences, Beihang University, Beijing,  China
  (\email{xiaoxian@buaa.edu.cn}, \url{https://sites.google.com/site/rootclassification/}).}
\and Zhishuo Zhang\thanks{School of Mathematical Sciences, Beihang University, Beijing,  China
  (\email{793008192@buaa.edu.cn}).}
}
\begin{document}

\maketitle
\begin{abstract}
 For the reaction networks with one-dimensional stoichiometric subspaces,  we show the following results.
 (1) If the maximum number of positive steady states is an even number $N$, then the maximum number of stable positive steady states
 is $\frac{N}{2}$. 
 (2) If the maximum number of positive steady states is an odd number $N$, then we provide a condition on the network such that the maximum number of stable positive steady states
 is $\frac{N-1}{2}$ if this condition is satisfied, and this maximum number is $\frac{N+1}{2}$ otherwise.
\end{abstract}

\begin{keywords}
biochemical reaction networks, mass-action kinetics, multistationarity,
multistability, stoichiometric subspaces
\end{keywords}

\begin{AMS}
 92C40, 92C45
\end{AMS}

\section{Introduction}\label{sec:intro}
For the dynamical systems that arise from biochemical reaction networks, the following questions are widely open.

\begin{question}\label{question1}
Does it hold that a network admits multistability if this network admits at least three positive steady states?
\end{question}

\begin{question}\label{question2}
More generally, what is the relation between the numbers of stable positive steady states and positive steady states?
\end{question}

These questions are motivated by the multistability problem of biochemical reaction systems, which
is linked to switch-like behavior and decision-making process in cellular signaling \cite{ FM1998,BF2001, XF2003, CTF2006}.
We say a network admits multistability if there exist
positive parameters (rate constants) such that the corresponding dynamical system (arising under mass-action kinetics)
has at least two stable positive steady states in the same stoichiometric compatibility class.
Deciding the existence of multistability is a hard problem in general. So far, a typical method is first finding multistationarity (i.e., finding
parameters such that
a given network admits at least two positive steady states), and then numerically checking the stability of those steady states (e.g., \cite{OSTT2019}). Recently, symbolic methods based on computational algebraic geometry are also successfully applied to a list of biochemical reaction networks \cite{TF2020}. However, we still need simpler criterion for multistability because the standard tools (e.g., Routh-Hurwitz criterion \cite{HTX2015}, or alternatively Li\'enard-Chipart criterion \cite{datta1978}) are computationally challenging.

Since there has been a list of nice criteria for multistationarity (e.g., \cite{CF2005, ShinarFeinberg2012,CF2012, WiufFeliu_powerlaw, BP, signs, CFMW, DMST2019}), it is natural to ask what the relation between the numbers of stable positive steady states and positive steady states is. Notice that a stable steady state must be nondegenerate.  So, a related problem is the relation between the numbers of nondegenerate positive steady states and positive steady states.
For this problem, it is conjectured that these two numbers are equal if a network admits finitely many positive steady states \cite[Nondegeneracy Conjecture]{Joshi:Shiu:Multistationary}, and this conjecture is proved
for the small networks with one species, and for a sub-family of the networks with  at most two reactions (possibly reversible) \cite{Joshi:Shiu:Multistationary, shiu-dewolff}.
It is shown in numerous computational results that if a network has three nondegenerate positive steady states for a choice of parameters, then two of these steady states are stable \cite{OSTT2019,TF2020}, which suggests the answer to Question \ref{question1} might be positive. But there is no answer to the more general problem Question \ref{question2}.

In this paper, for the reaction networks with
one-dimensional stoichiometric subspaces, we answer both Question \ref{question1} Question \ref{question2} by the following three results.
\begin{itemize}
\item[{\bf (1)}]There exists a network such that its maximum number of positive steady states is $3$, and the maximum number of stable positive steady states is only $1$  (see Example \ref{ex:counter}).
So, the answer to Question \ref{question1} is negative.
\item[{\bf (2)}]
If the maximum number of positive steady states is an even number $N$, then the maximum number of stable positive steady states
 is $\frac{N}{2}$ (Theorem \ref{thm:bdm} (a)).
\item[{\bf (3)}]If the maximum number of positive steady states is an odd number $N$, then we provide a condition on the network such that the maximum number of stable positive steady states
 is $\frac{N-1}{2}$ if this condition is satisfied, and this maximum number is $\frac{N+1}{2}$ otherwise (Theorem \ref{thm:bdm} (b)).
\end{itemize}
From the proof of Theorem \ref{thm:bdm}, we also see that if a choice of parameters yields the maximum number of positive steady states, then it also yields the maximum number of stable positive steady states (Corollary \ref{cry:bdm}). Besides the main results above, based on the lemmas for proving Theorem \ref{thm:bdm},  we provide a self-contained proof for the nondegeneracy conjecture (Theorem \ref{thm:nc}). We also provide Corollary \ref{cry:bd} for the networks with two  reactions, which shows that multistability can be easily read off from the reaction coefficients.

The idea of studying networks with one-dimensional stoichiometric subspaces is inspired by the fact that the small networks with two reactions (possibly reversible) studied in \cite{Joshi:Shiu:Multistationary,shiu-dewolff,tx2020} have one-dimensional stoichiometric subspaces. Since it is easier to eliminate variables by using the conservation-law equations, these networks have a list of nice properties as follows.  Their (nondegenerate) steady states are (simple) solutions to univariate polynomials (see Section \ref{sec:ss} and Lemma \ref{lm:nonde}).
The stability of a steady state can be determined by checking the trace of a Jacobian matrix (see Lemma \ref{lm:stable}).
By these properties, one can first prove the nondegeneracy conjecture (Theorem \ref{thm:nc}) by
perturbing a multiple solution of a univariate polynomial (see Lemma \ref{lm:even} and Lemma \ref{lm:odd}).
It is worth mentioning that some similar but different ideas of perturbing parameters can be found in the study of (nondegenerate) multistationarity
\cite{shiu-dewolff,DMST2019}.  Once the nondegeneracy conjecture is proved, it is not difficult to deduce the first part of the main result (Theorem \ref{thm:bdm} (a))  based on a ``sign condition" (see Theorem \ref{thm:sign}). The challenging part of the main result (Theorem \ref{thm:bdm} (b)) is proved by computing the Brouwer degree of a univariate polynomial \eqref{eq:q} on a bounded interval. It is remarkable that  the Brouwer degree of steady-state equations is already known  for the dissipative networks admitting no boundary steady states \cite[Theorem 3]{CFMW}. However, for a large class of dissipative (or, conservative) networks (e.g., networks with exactly two irreversible reactions), if they admit multistability, then they definitely admit boundary steady states (see \cite[Theorem 4.8]{tx2020}). So, the results in \cite{CFMW} do not apply here.

The rest of this paper is organized as follows.
In Section~\ref{sec:back}, we review the definitions of multistationarity and multistability for the
mass-action kinetics systems  arising  from reaction networks.  In Section \ref{sec:ps}, we formally present the main result: 
Theorem \ref{thm:bdm}. We also present two useful corollaries, and we illustrate how these results work by two examples.
In Section~\ref{sec:unipoly}, we provide two useful lemmas on univariate polynomials.
In Section~\ref{sec:dim1}, we prepare a list of nice properties for the networks with one-dimensional stoichiometric subspaces.
In Section~\ref{sec:nc}, we prove the nondegeneracy conjecture for the networks with one-dimensional stoichiometric subspaces (Theorem \ref{thm:nc}).
In Section~\ref{sec:mstab}, we prove Theorem \ref{thm:bdm} and its corollaries.
Finally, we end this paper with some future directions inspired by Theorem \ref{thm:bdm}, see Section \ref{sec:dis}.
\section{Background}\label{sec:back}
\subsection{Chemical reaction networks}\label{sec:pre}



In this section, we briefly recall the standard notions and definitions on reaction networks, see \cite{CFMW, Joshi:Shiu:Multistationary} for more details.
A \defword{reaction network} $G$  (or \defword{network} for short) consists of a set of $s$ species $\{X_1, X_2, \ldots, X_s\}$ and a set of $m$ reactions:
\begin{align}\label{eq:network}
\alpha_{1j}X_1 +
 \dots +
\alpha_{sj}X_s
~ \xrightarrow{} ~
\beta_{1j}X_1 +
 \dots +
\beta_{sj}X_s,
 \;
    {\rm for}~
	j=1,2, \ldots, m,
\end{align}
where all $\alpha_{ij}$ and $\beta_{ij}$ are non-negative integers, and
$(\alpha_{1j},\ldots,\alpha_{sj})\neq (\beta_{1j},\ldots,\beta_{sj})$. We call the $s\times m$ matrix with
$(i, j)$-entry equal to $\beta_{ij}-\alpha_{ij}$ the
\defword{stoichiometric matrix} of
$G$,
denoted by ${\mathcal N}$.
We call the image of ${\mathcal N}$
the \defword{stoichiometric subspace}, denoted by $S$.

We denote by $x_1, x_2, \ldots, x_s$ the concentrations of the species $X_1,X_2, \ldots, X_s$, respectively.
Under the assumption of mass-action kinetics, we describe how these concentrations change  in time by following system of ODEs:
\begin{align}\label{eq:sys}
\dot{x}~=~f(\kappa; x)~=~(f_1(\kappa; x), \ldots, f_s(\kappa; x))^{\top}~:=~{\mathcal N}\cdot \begin{pmatrix}
\kappa_1 \, x_1^{\alpha_{11}}
		x_2^{\alpha_{21}}
		\cdots x_s^{\alpha_{s1}} \\
\kappa_2 \, x_1^{\alpha_{12}}
		x_2^{\alpha_{22}}
		\cdots x_s^{\alpha_{s2}} \\
		\vdots \\
\kappa_m \, x_1^{\alpha_{1m}}
		x_2^{\alpha_{2m}}
		\cdots x_s^{\alpha_{sm}} \\
\end{pmatrix}~,
\end{align}
where $x$ denotes the vector $(x_1, x_2, \ldots, x_s)$,
and each $\kappa_j \in \mathbb R_{>0}$ is called a \defword{rate constant} corresponding to the
$j$-th reaction in \eqref{eq:network}.
 By considering the rate constants as an unknown vector $\kappa:=(\kappa_1, \kappa_2, \dots, \kappa_m)$, we have polynomials $f_{i}(\kappa;x) \in \mathbb Q[\kappa, x]$, for $i=1,2, \dots, s$.

 Let $d:=s-{\rm rank}({\mathcal N})$. A \defword{conservation-law matrix} of $G$, denoted by $W$, is any row-reduced $d\times s$-matrix whose rows form a basis of $S^{\perp}$ (note here, ${\rm rank}(W)=d$). Our system~\eqref{eq:sys} satisfies $W \dot x =0$,  and both the positive orthant $\mathbb R_{>0}^s$ and its closure $\mathbb R_{\ge 0}$ are forward-invariant for the dynamics. Thus,
any trajectory $x(t)$ beginning at a non-negative vector $x(0)=x^0 \in
\mathbb{R}^s_{> 0}$ remains, for all positive time,
 in the following \defword{stoichiometric compatibility class} with respect to the  \defword{total-constant vector} $c:= W x^0 \in {\mathbb R}^d$:
\begin{align*}
{\mathcal P}_c~:=~ \{x\in {\mathbb R}_{\geq 0}^s \mid Wx=c\}.~
\end{align*}
That means ${\mathcal P}_c$ is forward-invariant with
respect to the dynamics~\eqref{eq:sys}.

\subsection{Multistationarity and Multistability}\label{sec:mm}
A \defword{steady state} 
of~\eqref{eq:sys} is a concentration vector
$x^* \in \mathbb{R}_{\geq 0}^s$ such that $f(x^*)=0$, where  $f(x)$ is on the
right-hand side of the
ODEs~\eqref{eq:sys}.
If all coordinates of a steady state $x^*$ are strictly positive (i.e., $x^*\in \mathbb{R}_{> 0}^s$), then we call $x^*$ a \defword{positive steady state}.
If a steady state $x^*$ has zero coordinates (i.e., $x^*\in \mathbb{R}_{\geq 0}^s\backslash \mathbb{R}_{> 0}^s$), then we call $x^*$ a \defword{boundary steady state}.
We say a steady state $x^*$ is \defword{nondegenerate} if
${\rm im}\left({\rm Jac}_f (x^*)|_{S}\right)=S$,
where ${\rm Jac}_f(x^*)$ denotes the Jacobian matrix of $f$, with respect to $x$, at $x^*$.
A steady state $x^*$ is \defword{exponentially stable} (or, simply \defword{stable} in this paper)
if it is nondegenerate, and  all non-zero eigenvalues of ${\rm Jac}_f(x^*)$ have negative real parts.
Note that if a steady state is exponentially stable, then it is locally asymptotically stable \cite{P2001}.




Suppose $N\in {\mathbb Z}_{\geq 0}$. We say a  network  \defword{admits $N$ (nondegenerate) positive steady states}  if for some rate-constant
vector $\kappa$ and for some total-constant vector $c$,  it has $N$ (nondegenerate) positive steady states  in the stoichiometric compatibility class ${\mathcal P}_c$.
Similarly, we say a  network  \defword{admits $N$ stable positive steady states}  if for some rate-constant
vector $\kappa$ and for some total-constant vector $c$,  it has $N$ stable positive steady states  in ${\mathcal P}_c$.

The \defword{maximum number of positive steady states} of a network $G$ is
{\footnotesize
\[cap_{pos}(G)\;:=\;\max\{N\in {\mathbb Z}_{\geq 0}\cup \{+\infty\}|G \;\text{admits}\; N\; \text{positive steady states}\}.\]
}
The \defword{maximum number of nondegenerate positive steady states} of a network $G$ is
{\footnotesize
\[cap_{nondeg}(G)\;:=\;\max\{N\in {\mathbb Z}_{\geq 0}\cup \{+\infty\}|G \;\text{admits}\; N\; \text{nondegenerate positive steady states}\}.\]
}
The \defword{maximum number of stable positive steady states} of a network $G$ is
{\footnotesize
\[cap_{stab}(G)\;:=\;\max\{N\in {\mathbb Z}_{\geq 0}\cup \{+\infty\}|G \;\text{admits}\; N\; \text{stable positive steady states}\}.\]
}
We say a network admits \defword{multistationarity} if $cap_{pos}(G)\geq 2$.
We say a network admits \defword{multistability} if $cap_{stab}(G)\geq 2$.
\begin{conjecture}[Nondegeneracy Conjecture]\cite[Conjecture 2.3]{Joshi:Shiu:Multistationary}\label{conj:nc}
Given a network $G$,
if $cap_{pos}(G)<+\infty$, then $cap_{nondeg}(G)=cap_{pos}(G)$.
\end{conjecture}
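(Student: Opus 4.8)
The inequality $cap_{nondeg}(G)\le cap_{pos}(G)$ is immediate, since every nondegenerate positive steady state is in particular a positive steady state; so the whole content is the reverse inequality $cap_{nondeg}(G)\ge cap_{pos}(G)$. The plan is to fix parameters $(\kappa^*,c^*)$ realizing $N:=cap_{pos}(G)$ positive steady states and to perturb them into nearby parameters $(\kappa',c')$ under which $N$ positive steady states persist and all of them are nondegenerate. The first move is to reformulate nondegeneracy as simplicity of a square polynomial system: because the rows of $f$ satisfy the linear relations encoded by $W$ (as $W{\mathcal N}=0$), the equations $f(\kappa;x)=0$ together with the conservation laws $Wx=c$ amount to $s$ equations in $s$ unknowns, and a positive steady state is nondegenerate precisely when it is a simple (transverse) solution of this square system, i.e.\ the relevant Jacobian is nonsingular there.

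First I would dispose of the nondegenerate steady states at $(\kappa^*,c^*)$: by the implicit function theorem each of them persists under any sufficiently small perturbation of the parameters and remains nondegenerate, so it continues to contribute to $cap_{nondeg}$. Next I would invoke a parametric transversality (Sard-type) argument to obtain genericity of nondegeneracy: the set of parameters admitting some degenerate positive steady state is cut out by a discriminant-type condition and hence lies in a proper algebraic subset of parameter space, so a generic arbitrarily small perturbation makes every \emph{surviving} positive steady state simple. After these two steps the entire difficulty is concentrated in the degenerate steady states present at $(\kappa^*,c^*)$ and in whether the positive count survives the perturbation.

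The crux is therefore to choose the generic perturbation so that the total number of positive steady states does not drop below $N$. Near a degenerate positive steady state one would like a conservation-of-number principle: assign to it a local Brouwer degree (or intersection multiplicity) of the square system on a small neighborhood, and argue that this integer is preserved under perturbation while the emanating solutions stay real and positive. Maximality of $N$ supplies one half of the control, namely that the count can never exceed $N$ for any parameter; the hope is to combine this bound with the degree invariant and with lower semicontinuity of the positive count to pin the count at exactly $N$ for a suitable generic nearby parameter.

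The step I expect to be the genuine obstacle---and the reason the statement is posed as a conjecture rather than a theorem---is precisely this last one. In the multivariate setting there is no mechanism forcing the solutions that emanate from a degenerate steady state to remain in the open positive orthant: under perturbation they may become complex or cross into the boundary $\mathbb{R}_{\geq 0}^s\backslash \mathbb{R}_{>0}^s$, so the preserved local degree can be realized by \emph{fewer} positive real solutions and the count can fall below $N$ with no a priori reason to be recovered elsewhere. Controlling real positive solutions under deformation is a real-algebraic-geometry question that generic transversality alone does not settle. This is exactly where the one-dimensional hypothesis rescues the argument in Theorem~\ref{thm:nc}: there the positive steady states are the roots of a single univariate polynomial on a bounded interval, ``degenerate'' means ``multiple root,'' and the sign-change analysis of Lemma~\ref{lm:even} and Lemma~\ref{lm:odd} lets one perturb a multiple root into the largest possible number of simple real roots while maximality of $N$ forbids creating extra ones, thereby pinning the count to $N$. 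The absence of such a univariate structure in general is, in my view, the main open obstruction to the full conjecture.
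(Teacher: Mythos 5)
Your proposal is not a complete proof, and you say so yourself; the key point to make explicit is that this is not a repairable oversight in your argument but the actual status of the statement. The paper records it only as Conjecture~\ref{conj:nc} (quoting Joshi--Shiu) and proves it solely under the one-dimensional hypothesis, as Theorem~\ref{thm:nc}. The gap you flag is exactly the right one: in the multivariate setting the local Brouwer degree attached to a degenerate positive steady state is a \emph{signed} count that is preserved under perturbation, but the real solutions realizing it may become complex or cross into the boundary of $\mathbb{R}_{\geq 0}^s$, so the number of positive real solutions is not lower semicontinuous and can drop below $N$ with nothing forcing recovery elsewhere. A secondary soft spot you should also note: your Sard-type step asserts that the degeneracy locus is a proper algebraic subset of parameter space, but properness requires exhibiting at least one nearby parameter with all-simple positive solutions, which is essentially the statement being proved -- so that step is close to circular as written.

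Your account of how the one-dimensional hypothesis rescues the argument in Theorem~\ref{thm:nc} is faithful to the paper, with two refinements worth recording. First, the paper does not perturb parameters generically: it perturbs the \emph{single} rate constant $\kappa_{\ell}$ (with $\ell$ chosen as in \eqref{eq:ell}), after solving $q(\kappa;x_1)=0$ symbolically for $\kappa_{\ell}$ via the function $\phi$ of \eqref{eq:phi}, whose derivatives at a multiple root are controlled by Lemma~\ref{lm:phi}. Second, an even-multiplicity root does \emph{not} split under every small perturbation: by Lemma~\ref{lm:even} it splits into two simple roots for one sign of the perturbation of $\kappa_{\ell}$ and disappears locally for the other, while by Lemma~\ref{lm:odd} a simple or odd-multiplicity root persists as a simple root for either sign. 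Since one global sign must serve all roots simultaneously, the paper's proof needs a pigeonhole step: among the $v$ even-multiplicity roots, at least $\lceil v/2 \rceil$ share a preferred sign, and perturbing in that direction yields at least $2\lceil v/2\rceil + N - v \geq N$ simple roots in the relevant interval; the bound $cap_{pos}(G)=N$ then pins the nondegenerate count. Your phrase ``perturb a multiple root into the largest possible number of simple real roots'' glosses over this direction-dependence, which is the one genuinely delicate point in the paper's univariate argument.
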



\section{Main Result}\label{sec:ps}
In this section,   we first define a list of notions needed in the statement of the main result, and after that, we formally present the main result: 
Theorem \ref{thm:bdm}, which reveals the relation between the maximum numbers of stable positive steady states and positive steady states. We also provide two useful corollaries.
Corollary \ref{cry:bdm} shows that if we want to
find the parameters such that a network (with a one-dimensional stoichiometric subspace) has the maximum number of stable positive steady states, then one method is to search the parameters such that the network has the maximum number of positive steady states.
Corollary \ref{cry:bd} shows that if a network contains two reactions, then we can determine if it admits multistability by looking at the reaction coefficients.
Theorem \ref{thm:bdm} and the two corollaries will be proved in Section \ref{sec:mstab}. We illustrate how these results work by two examples, where Example \ref{ex:counter} answers Question \ref{question1}. We remark that Theorem \ref{thm:bdm} generalizes \cite[Theorem 3.16]{tx2020}.

\begin{assumption}\label{assumption}
For any network $G$ with reactions defined in \eqref{eq:network},
by the definition of reaction network,
we know $(\alpha_{11},\ldots, \alpha_{s1})\neq (\beta_{11},\ldots, \beta_{s1})$.
 Without loss of generality, we will assume 
  $\beta_{11}-\alpha_{11}\neq 0$ throughout this paper.
\end{assumption}

Suppose a network $G$ \eqref{eq:network} (with $s$ species and $m$ reactions) has a one-dimensional stoichiometric subspace. Note that the rank of
its conservation-law matrix is $s-1$.
Given a total-constant vector $c^*\in {\mathbb R}^{s-1}$, under Assumption \ref{assumption}, 
we define
\begin{align}
&  A_1 \;:=\; 1, \;\;\;\;\;\;\;\;\;\;\;\;\;\;\;\; B_1 \;:=\; 0, \label{eq:a}\\
& A_i \;:=\; \frac{\beta_{i1}-\alpha_{i1}}{\beta_{11}-\alpha_{11}}, \;\;\; B_i \;:=\; -\frac{c^*_{i-1}}{\beta_{11}-\alpha_{11}}, \;\; \text{for any}\; i\in \{2, \ldots, s\}, \label{eq:b}
 \end{align}
and for every $i\in \{1, \ldots, s\}$,  we define an equivalence class
\begin{align}\label{eq:iec}
[i]_{c^*} \;:=\;
\begin{cases}
\{k\in \{1, \ldots, s\}|A_k\neq 0, \;\text{and} \; \frac{B_k}{A_k}=\frac{B_i}{A_i} \}, &\text{if}\; A_i\neq 0, \\
    \{k\in \{1, \ldots, s\}|A_k=0 \},   &\text{if}\;  A_i=0.
\end{cases}
\end{align}
 When $c^*$ is clear from the context, we simply denote $[i]_{c^*}$ by $[i]$.
Denote by $r$ the number of these equivalence classes, i.e.,
\begin{align}\label{eq:r}
r\; :=\; |\;\{\;[i]\;|\;i\in \{1, \ldots, s\}\;\}\;|\;\leq\; s.
\end{align}
    Without loss of generality, we assume $1, \ldots, r$ are the representatives of these equivalence classes.
    Recall $m$ is the number of reactions in $G$ \eqref{eq:network}.
    For every $k\in \{1, \ldots, r\}$,  we define
    \begin{align}\label{eq:varphi}
    \varphi_k \;&:=\; \min \limits_{1\leq j\leq m} \{\sum_{i\in[k]}\alpha_{ij}\},\;
    \text{where}\; \alpha_{ij}\text{'s are the reaction coefficients, see \eqref{eq:network}}.
    \end{align}
    And for every $j\in \{1, \ldots, m\}$, 
    we define
    \begin{align}\label{eq:gamma}
    \gamma_{kj}\;& :=\; \sum_{i\in[k]}\alpha_{ij}-\varphi_k.
    \end{align}
Define two sets of indices
 \begin{align}
{\mathcal J} & \;:=\; \{i\in \{1,\ldots, s\}|A_i\neq 0\}, \;\;\;\;\;\;\; \text{and} \label{eq:nindexj1}\\
 {\mathcal H} &\;:=\; \{k\in \{1,\ldots, r\}\cap {\mathcal J}|\gamma_{k1}, \ldots, \gamma_{km}\;\text{are not all the same}\}. \label{eq:nindexi}
 \end{align}
 \begin{assumption}\label{assumption2}
Given a network $G$,
   suppose the stoichiometric subspace of $G$ is one-dimensional.
   For a total-constant vector $c^*$, let ${\mathcal H}$ be the set of indices defined in \eqref{eq:nindexi}.
   If there exists a  rate-constant vector $\kappa^*$ such that $G$ has $N$ $(0<N<+\infty)$ positive steady states in the stoichiometric compatibility class ${\mathcal P}_{c^*}$,  then
 without loss of generality, we will assume 
  $1\in {\mathcal H}$.
\end{assumption}
We will explain later in Section \ref{sec:dim1} why Assumption \ref{assumption2} does not lose generality. See
Lemma \ref{lm:assumption2}, Remark \ref{rmk:assumption2} and Example \ref{ex:assumption2}.

For every $i\in \{1, \ldots, s\}$, define an open interval
 \begin{align}\label{eq:iinterval}
I_i \;:= \;
\begin{cases}
(-\frac{B_i}{A_i}, +\infty)&\text{if}\; A_i>0\\
(0, +\infty)&\text{if}\; A_i=0\\
(0, -\frac{B_i}{A_i})& \text{if}\;  A_i<0
\end{cases}.
\end{align}
Under Assumption \ref{assumption2}, define an open interval
 \begin{align}\label{eq:tinterval}
I \;:=\; \bigcap\limits_{k\in {\mathcal H}} I_k, \;\;\; \text{where}\;
I_k \; \text{is defined in \eqref{eq:iinterval}.}
\end{align}
\begin{remark}\label{rmk:end1}
Under Assumption \ref{assumption2}, we have $1\in {\mathcal H}$. So, by \eqref{eq:tinterval},
$I \subset I_1=(0, +\infty)$. Therefore, the left endpoint of $I$ \eqref{eq:tinterval} is no less than $0$.
 \end{remark}
Let
\begin{align}\label{eq:setk}
  \tau\;\text{be the index in}\;{\mathcal H} \;\text{such that}\; A_{\tau}>0 \;\text{and} \;-\frac{B_{\tau}}{A_{\tau}} \;\text{is the left endpoint of}\;I.
 \end{align}
 Note that by \eqref{eq:varphi} and \eqref{eq:gamma}, there exists $j\in \{1, \ldots, m\}$ such that
 $\gamma_{\tau j}=0$.
 We define
 \begin{align}\label{eq:L}
{\mathcal L} \;:=\; \{j\in \{1, \ldots, m\}| \gamma_{\tau j}\;=\; 0 \}.
 \end{align}

Now, we are prepared to define two important sets of parameters, which are depending on the given network $G$:
{\footnotesize
\begin{align}
{\mathcal W} \;&:=\; \{(\kappa^*, c^*)|\text{for the rate-constant vector}\; \kappa^*,  G\; \text{has}\;cap_{pos}(G)\; \text{positive steady states in}\; {\mathcal P}_{c^*} \},  \label{eq:witness}\\
\text{and} & \notag\\
{\mathcal B} \;&:=\; \{(\kappa^*, c^*)|\sum\limits_{j\in \mathcal L}(\beta_{1j}-\alpha_{1j})\kappa^*_{j}\prod\limits_{i\in {\mathcal J}}|A_i|^{\alpha_{ij}}\prod\limits_{i\in \{1, \ldots, s\}\backslash{\mathcal J}}B_i^{\alpha_{ij}} \prod \limits_{k\in {\mathcal H}, k\neq \tau}(\frac{B_k}{|A_k|}-\frac{A_k}{|A_k|}\frac{B_{\tau}}{A_{\tau}})^{\gamma_{kj}}>0\} \label{eq:bwd}
\end{align}
}(here, we remark that in \eqref{eq:bwd}, the values of $A_i$, $B_i$, $\gamma_{kj}$, ${\mathcal J}$, ${\mathcal H}$, $\tau$, and ${\mathcal L}$ are depending on $c^*$ (recall \eqref{eq:a}--\eqref{eq:L})).
We explain the meaning of the above two sets as follows.
\begin{itemize}
\item[(i)]The set ${\mathcal W}$ collects all ``witnesses" $(\kappa^*, c^*)$ such that the number of positive steady states  reaches its maximum.
\item[(ii)]The set ${\mathcal B}$ collects all choices of parameters $(\kappa^*, c^*)$ such that
the ``Brouwer degree" is positive (see more discussion in Section \ref{sec:dis}, and also see the meaning of Brouwer degree in Remark \ref{rmk:nobd}, or in
\cite[Theorem 1]{CFMW}).
\end{itemize}

\begin{theorem}\label{thm:bdm}
Given a network $G$ \eqref{eq:network} with a one-dimensional stoichiometric subspace, suppose $cap_{pos}(G)<+\infty$.
 \begin{itemize}
\item[(a)] If $cap_{pos}(G)$ is even, then $cap_{stab}(G)\;=\;\frac{cap_{pos}(G)}{2}$.
\item[(b)] If $cap_{pos}(G)$ is odd, then
\begin{align*}
cap_{stab}(G)\;=\;
\begin{cases}
\frac{cap_{pos}(G)-1}{2} & \text{if}\;\;\; {\mathcal W}  \cap {\mathcal B}=\emptyset  \\
\frac{cap_{pos}(G)+1}{2}  & \text{if}\;\;\;  {\mathcal W}   \cap {\mathcal B}\neq \emptyset
\end{cases},
\end{align*}
 \end{itemize}
 where ${\mathcal W}$ and ${\mathcal B}$ are defined in \eqref{eq:witness}--\eqref{eq:bwd}.
\end{theorem}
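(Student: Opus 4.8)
The plan is to run the entire argument through the univariate reduction prepared in Section~\ref{sec:dim1}. Since $\dim S=1$, every $f_i$ is a scalar multiple of $f_1$, and along the one-dimensional stoichiometric line the non-leading coordinates are forced to be the affine functions $x_i = A_i x_1 + B_i$ recorded by \eqref{eq:a}--\eqref{eq:b}. Substituting these into $f_1$ produces the univariate polynomial $q$ of \eqref{eq:q}, whose roots in the open interval $I$ of \eqref{eq:tinterval} are exactly the positive steady states in $\mathcal{P}_{c^*}$. By Lemma~\ref{lm:nonde} the nondegenerate positive steady states are precisely the simple roots of $q$ in $I$, and by Lemma~\ref{lm:stable} such a simple root is stable exactly when the induced one-dimensional vector field is decreasing there, i.e.\ when $q$ crosses zero in the decreasing direction (a ``down-crossing''); since on $I$ the factor relating $\dot x_1$ to $q$ has a fixed positive sign, down-crossings of $q$ and stable states coincide. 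Finally, Theorem~\ref{thm:nc} gives $cap_{nondeg}(G)=cap_{pos}(G)=N$, and moreover shows that at any witness in $\mathcal{W}$ the $N$ distinct roots must all be simple: a multiple root could be split by the perturbation of Lemma~\ref{lm:even} and Lemma~\ref{lm:odd} into strictly more than $N$ nearby roots in $I$, contradicting maximality.

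The combinatorial core is a crossing count. Order the distinct roots of $q$ in $I$ as $x^{(1)}<\dots<x^{(\ell)}$; $q$ changes sign precisely at odd-multiplicity roots, so down-crossings (stable) and up-crossings (unstable) must interleave. Hence between any two stable roots lies at least one unstable root, so a configuration with $D$ stable states has at least $2D-1$ distinct roots. Since $\ell\le cap_{pos}(G)=N$ for every choice of parameters, this gives the uniform bound $D\le\lfloor (N+1)/2\rfloor$, which reads $cap_{stab}(G)\le N/2$ when $N$ is even and $cap_{stab}(G)\le (N+1)/2$ when $N$ is odd. For a configuration of $N$ simple roots the exact number of down-crossings is $\lceil N/2\rceil$ or $\lfloor N/2\rfloor$ according to whether $q$ is positive or negative just to the right of the left endpoint of $I$, since the leftmost root is a down-crossing exactly when $q>0$ there.

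It remains to identify that boundary sign with $\mathcal{B}$ and to assemble the cases. Near the left endpoint $-B_\tau/A_\tau$ of $I$ (see \eqref{eq:setk}) the coordinate $x_\tau\to 0^+$, so the lowest-order part of $q$ in $x_\tau$ is carried exactly by the reactions $j\in\mathcal{L}$ with $\gamma_{\tau j}=0$ (see \eqref{eq:L}); evaluating the surviving monomials $\prod_i x_i^{\alpha_{ij}}$ at the endpoint, using $x_i=A_i(-B_\tau/A_\tau)+B_i$ for $i\in\mathcal{J}$ and $x_i=B_i$ for $i\notin\mathcal{J}$, and collecting the $(\beta_{1j}-\alpha_{1j})$ coefficients shows that the sign of $q$ just inside the left endpoint equals the sign of the sum defining $\mathcal{B}$ in \eqref{eq:bwd}. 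Thus $(\kappa^*,c^*)\in\mathcal{B}$ precisely when $q>0$ near the left endpoint. For part~(a): any witness in $\mathcal{W}$ yields $N$ simple roots, hence exactly $N/2$ down-crossings irrespective of the boundary sign (this is the ``sign condition'' of Theorem~\ref{thm:sign}), matching the upper bound, so $cap_{stab}(G)=N/2$. For part~(b), suppose $N$ is odd. If $\mathcal{W}\cap\mathcal{B}\neq\emptyset$, a point there has $N$ simple roots and $q>0$ at the left endpoint, giving $\lceil N/2\rceil=(N+1)/2$ stable states, which meets the upper bound. If $\mathcal{W}\cap\mathcal{B}=\emptyset$, then any configuration attaining $D=(N+1)/2$ would need at least $2D-1=N$ distinct roots, hence exactly $N$ (so it lies in $\mathcal{W}$), and the count $\lceil N/2\rceil$ among $N$ roots forces a down-crossing at the leftmost root, i.e.\ membership in $\mathcal{B}$ — contradicting $\mathcal{W}\cap\mathcal{B}=\emptyset$; therefore $D\le (N-1)/2$, and this value is realized by any witness (which has the unfavorable sign and thus $\lfloor N/2\rfloor=(N-1)/2$ stable states), giving $cap_{stab}(G)=(N-1)/2$.

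The step I expect to be the main obstacle is the sign computation identifying the $\mathcal{B}$-sum of \eqref{eq:bwd} with the sign of $q$ at the left endpoint. This requires carefully tracking the absolute-value normalizations $|A_i|$, the separate contribution of the coordinates $i\notin\mathcal{J}$ where $A_i=0$, and the exponents $\gamma_{kj}$ arising from the equivalence-class grouping \eqref{eq:iec}, and then checking that this leading-coefficient sign depends only on the reaction data captured by $\mathcal{L}$ (and hence is unaffected by whether the roots are simple). A secondary subtlety, handled above, is confirming that the parity dichotomy for odd $N$ is exactly governed by $\mathcal{W}\cap\mathcal{B}$: attaining the larger count $(N+1)/2$ is logically equivalent to having a maximal configuration with the favorable boundary sign, so the emptiness of $\mathcal{W}\cap\mathcal{B}$ forces both the upper and lower bound to equal $(N-1)/2$.
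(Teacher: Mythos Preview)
Your overall strategy matches the paper's: reduce to the univariate polynomial $q$, count down-crossings via the interleaving of Theorem~\ref{thm:sign}, and identify the $\mathcal{B}$-condition with the sign of $q$ at the left endpoint $a=-B_\tau/A_\tau$ of $I$. Two steps, however, do not go through as written.

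First, the assertion that ``at any witness in $\mathcal{W}$ the $N$ distinct roots must all be simple'' is not justified by Lemmas~\ref{lm:even} and~\ref{lm:odd}. Lemma~\ref{lm:odd} produces only \emph{one} simple root near a root of odd multiplicity $2K+1\ge 3$, so perturbing such a configuration still yields $N$ roots, not $N+1$; likewise, two even-multiplicity roots whose splitting directions (the $\mathrm{sign}(\delta)$ of Lemma~\ref{lm:even}) are opposite cannot both be split by the same one-parameter perturbation of $\kappa_\ell$. Thus a point of $\mathcal{W}$ may well carry degenerate steady states. The paper repairs Case~1 of part~(b) with Corollary~\ref{cry:ww'}: since $\mathcal{B}$ is defined by a strict inequality and the perturbation of Theorem~\ref{thm:nc} moves only $\kappa_\ell$, one passes from $\mathcal{W}\cap\mathcal{B}$ to $\mathcal{W}_{nondeg}\cap\mathcal{B}$ and only then applies your crossing count via Corollary~\ref{cry:lmbdm}.

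Second, in Case~2 of part~(b) you need ``$(N+1)/2$ stable $\Rightarrow (\kappa^*,c^*)\in\mathcal{B}$'', i.e.\ $q(\kappa^*;a)>0$. Your argument gives only $q>0$ on $(a,x_1^{(1)})$, hence $q(\kappa^*;a)\ge 0$; if $q(\kappa^*;a)=0$ the point is \emph{not} in $\mathcal{B}$ and no contradiction with $\mathcal{W}\cap\mathcal{B}=\emptyset$ follows. Ruling out $q(\kappa^*;a)=0$ is exactly Lemma~\ref{lm:nobd}: if $a$ were a root of $q$, then Lemma~\ref{lm:even} or~\ref{lm:odd} applied at $a$ pushes a simple root into $(a,M)$ while Lemma~\ref{lm:odd} keeps one simple root near each of the $N$ interior roots, yielding $N+1$ roots in $I$ and contradicting Lemma~\ref{lm:tinterval}. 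That lemma in turn requires the hypothesis that every degenerate steady state at the witness has odd multiplicity, which the paper extracts as Lemma~\ref{lm:x1-nondeg}(b) from the interleaving plus Lemma~\ref{lm:uni}. Your identification of the $\mathcal{B}$-sum with $q(\kappa^*;a)$ is correct (this is the computation at the end of the proof of Lemma~\ref{lm:bdm}), but precisely because it is the \emph{value} at $a$ rather than a limiting sign, the nonvanishing step cannot be skipped.
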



\begin{corollary}\label{cry:bdm}
Given a network $G$ \eqref{eq:network} with a one-dimensional stoichiometric subspace, suppose $cap_{pos}(G)<+\infty$.
Then, there exist a rate-constant
vector $\kappa$ and a total-constant vector $c$ such that the network $G$ has $cap_{pos}(G)$ positive steady states in the stoichiometric compatibility class ${\mathcal P}_c$, where
$cap_{stab}(G)$ of these positive steady states are stable.
\end{corollary}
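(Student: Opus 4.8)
The plan is to extract the corollary directly from the lower-bound half of the proof of Theorem~\ref{thm:bdm}, exploiting the fact that in the one-dimensional case stability is forced to alternate along the steady states. Concretely, in $\mathcal{P}_{c^*}$ the positive steady states of $G$ correspond bijectively to the roots in the interval $I$ \eqref{eq:tinterval} of the single univariate polynomial \eqref{eq:q}, and by Lemma~\ref{lm:nonde} a positive steady state is nondegenerate exactly when the corresponding root is simple. The whole argument therefore reduces to counting sign changes of $q$ on $I$.

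First I would select a witness. Since $cap_{pos}(G) = cap_{nondeg}(G) =: N$ by the Nondegeneracy Conjecture (Theorem~\ref{thm:nc}), there is a choice $(\kappa^*, c^*)$ for which $q$ has $N$ simple roots $p_1 < \cdots < p_N$ in $I$; because a configuration with $N$ nondegenerate positive steady states has at least $N$, hence exactly $N$, positive steady states, this $(\kappa^*, c^*)$ automatically lies in $\mathcal{W}$. In the odd case I would in addition require $(\kappa^*, c^*) \in \mathcal{W} \cap \mathcal{B}$ whenever that set is nonempty---this is possible because $\mathcal{B}$ is cut out by the strict inequality \eqref{eq:bwd}, so it is stable under the small perturbations used to make all roots simple, provided these perturbations preserve the combinatorial data $\mathcal{J}, \mathcal{H}, \tau, \mathcal{L}$. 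I would then apply Lemma~\ref{lm:stable}: a simple root is a stable steady state precisely when $q$ has a prescribed sign of its derivative there, so along $p_1 < \cdots < p_N$ the stable and unstable types strictly alternate (this is the content of the sign condition, Theorem~\ref{thm:sign}). Consequently the number of stable steady states at $(\kappa^*, c^*)$ is completely determined by the sign of $q$ just to the right of the left endpoint $-B_\tau/A_\tau$ of $I$, i.e.\ by the Brouwer degree (see Remark~\ref{rmk:nobd}).

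A direct count then finishes the argument. If $N$ is even, the alternation yields exactly $N/2$ stable states irrespective of the endpoint sign, so the chosen $(\kappa^*, c^*)$ realizes both $cap_{pos}(G) = N$ and $cap_{stab}(G) = N/2$. If $N$ is odd, the count is $(N+1)/2$ when $q>0$ near the endpoint and $(N-1)/2$ otherwise; since the defining inequality of $\mathcal{B}$ in \eqref{eq:bwd} is exactly the sign of the leading term of $q$ at $-B_\tau/A_\tau$, membership of the witness in $\mathcal{B}$ produces $(N+1)/2$ stable states and its absence produces $(N-1)/2$, matching both branches of Theorem~\ref{thm:bdm}(b). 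In every case the same $(\kappa^*, c^*)$ serves as a witness for $cap_{pos}(G)$ and for $cap_{stab}(G)$, which is precisely the assertion of the corollary. The one point requiring care---already the crux of Theorem~\ref{thm:bdm}(b)---is the endpoint asymptotic identifying the sign of $q$ near $-B_\tau/A_\tau$ with the product-sum in \eqref{eq:bwd}, where the exponents $\gamma_{kj}$ record which linear factors vanish at the endpoint and the index set $\mathcal{L}$ isolates the lowest-order terms; once this is in hand, the corollary is immediate.
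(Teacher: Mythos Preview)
Your proposal is correct and follows essentially the same route as the paper: pick a nondegenerate witness via Theorem~\ref{thm:nc} (and, in the odd case with ${\mathcal W}\cap{\mathcal B}\neq\emptyset$, a nondegenerate witness inside ${\mathcal B}$, which is exactly Corollary~\ref{cry:ww'}), then read off the number of stable steady states from the alternation in Theorem~\ref{thm:sign} together with the endpoint sign of $q$ at $-B_\tau/A_\tau$ (Lemma~\ref{lm:nobd} and Corollary~\ref{cry:lmbdm}). One small imprecision worth tightening: positive steady states correspond to roots of $q$ in $\bigcap_{i=1}^s I_i$, not in $I$ (Lemma~\ref{lm:qroot}); your bijection with roots in $I$ is only valid for $(\kappa^*,c^*)\in{\mathcal W}$, where Lemma~\ref{lm:tinterval} forces all roots of $q$ in $I$ to already lie in $\bigcap_{i=1}^s I_i$---you should make that dependence explicit.
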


\begin{corollary}\label{cry:bd}
 Given a network $G$ \eqref{eq:network}, suppose $G$ contains two reactions and $cap_{pos}(G)<+\infty$.
 \begin{itemize}
\item[(a)] If $cap_{pos}(G)$ is even, then $cap_{stab}(G)\;=\;\frac{cap_{pos}(G)}{2}$.
\item[(b)] If $cap_{pos}(G)$ is odd, then
\begin{align*}
cap_{stab}(G)\;=\;
\begin{cases}
\frac{cap_{pos}(G)-1}{2} & \text{if}\;\;\; {\mathcal W}  \cap {\mathcal B}_{2-react}=\emptyset  \\
\frac{cap_{pos}(G)+1}{2}  & \text{if}\;\;\;  {\mathcal W}   \cap {\mathcal B}_{2-react}\neq \emptyset
\end{cases},
\end{align*}
 \end{itemize}
 where ${\mathcal W}$  is defined in \eqref{eq:witness}, and
 \begin{align}\label{eq:bd}
 {\mathcal B}_{2-react} \;&:=\; \{(\kappa^*, c^*)|(\gamma_{\tau2}-\gamma_{\tau1})(\beta_{\tau1}-\alpha_{\tau1})>0\}
 \end{align}
 (here, recall $\tau$ \eqref{eq:setk} and $\gamma_{\tau j}$ $(j\in\{1,2\})$ \eqref{eq:gamma} are depending on the total constant-vector $c^*$). Especially, if $\tau$ is the only index in $[\tau]$ (recall \eqref{eq:iec}), then
 the inequality stated in \eqref{eq:bd} can be replaced with
 \begin{align}\label{eq:sbd}
 (\alpha_{\tau2}-\alpha_{\tau1})(\beta_{\tau1}-\alpha_{\tau1})>0.
 \end{align}
 \end{corollary}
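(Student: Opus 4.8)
The plan is to derive Corollary~\ref{cry:bd} directly from Theorem~\ref{thm:bdm} by specializing to $m=2$. Part~(a) is literally the statement of Theorem~\ref{thm:bdm}(a), so nothing is needed there. For part~(b), Theorem~\ref{thm:bdm}(b) already splits into the same two cases according to whether ${\mathcal W}\cap{\mathcal B}$ is empty, so it suffices to prove that for a two-reaction network ${\mathcal W}\cap{\mathcal B}={\mathcal W}\cap{\mathcal B}_{2-react}$; that is, on every witness $(\kappa^*,c^*)\in{\mathcal W}$ the Brouwer-degree inequality \eqref{eq:bwd} defining ${\mathcal B}$ reduces to the sign inequality \eqref{eq:bd} defining ${\mathcal B}_{2-react}$.

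First I would identify ${\mathcal L}$. Since $\tau\in{\mathcal H}$, by \eqref{eq:nindexi} the numbers $\gamma_{\tau1},\gamma_{\tau2}$ are not all equal, and by \eqref{eq:varphi}--\eqref{eq:gamma} they are nonnegative with minimum $0$; hence exactly one vanishes and ${\mathcal L}=\{j_0\}$ is a singleton, with $j_0=1$ when $\gamma_{\tau1}=0<\gamma_{\tau2}$ (so $\gamma_{\tau2}-\gamma_{\tau1}>0$) and $j_0=2$ when $\gamma_{\tau2}=0<\gamma_{\tau1}$ (so $\gamma_{\tau2}-\gamma_{\tau1}<0$). The sum in \eqref{eq:bwd} then collapses to one term, and I would check that on ${\mathcal W}$ every factor of that term except $(\beta_{1j_0}-\alpha_{1j_0})$ is strictly positive: $\kappa^*_{j_0}>0$; each $|A_i|^{\alpha_{ij_0}}>0$ since $A_i\neq0$ for $i\in{\mathcal J}$; each $B_i^{\alpha_{ij_0}}>0$ because a positive steady state has $x_i=B_i>0$ for $i\notin{\mathcal J}$ (where $A_i=0$), by the parametrization of Section~\ref{sec:dim1}; and each base $\frac{B_k}{|A_k|}-\frac{A_k}{|A_k|}\frac{B_{\tau}}{A_{\tau}}$ is strictly positive. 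The last point uses that $-B_{\tau}/A_{\tau}$ is the left endpoint of the nonempty interval $I$: for $k\in{\mathcal H}$, $k\neq\tau$, with $A_k>0$ the base is $\frac{B_k}{A_k}-\frac{B_{\tau}}{A_{\tau}}>0$ (strict because distinct representatives lie in distinct classes, so $\frac{B_k}{A_k}\neq\frac{B_{\tau}}{A_{\tau}}$ by \eqref{eq:iec}), while for $A_k<0$ the base is $-\frac{B_k}{A_k}+\frac{B_{\tau}}{A_{\tau}}>0$ since $-B_k/A_k$ bounds $I$ from the right. Thus on ${\mathcal W}$ the inequality defining ${\mathcal B}$ is equivalent to $\beta_{1j_0}-\alpha_{1j_0}>0$.

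It remains to convert $\beta_{1j_0}-\alpha_{1j_0}>0$ into the stated condition at index $\tau$. Because the stoichiometric subspace is one-dimensional, the two reaction vectors are parallel: $\beta_{i2}-\alpha_{i2}=\mu(\beta_{i1}-\alpha_{i1})$ for all $i$ and some scalar $\mu\neq0$. On a witness there is a positive zero of $f_1(x)=\sum_j(\beta_{1j}-\alpha_{1j})\kappa^*_j\prod_ix_i^{\alpha_{ij}}$; if $\mu>0$ both summands would carry the sign of $\beta_{11}-\alpha_{11}$ on the positive orthant and $f_1$ could not vanish there, so $\mu<0$. Using $\beta_{\tau1}-\alpha_{\tau1}=A_{\tau}(\beta_{11}-\alpha_{11})$ with $A_{\tau}>0$, I finish by cases: if $j_0=1$ then $\beta_{1j_0}-\alpha_{1j_0}=\beta_{11}-\alpha_{11}$ shares the sign of $\beta_{\tau1}-\alpha_{\tau1}$, which matches \eqref{eq:bd} because $\gamma_{\tau2}-\gamma_{\tau1}>0$; if $j_0=2$ then $\beta_{1j_0}-\alpha_{1j_0}=\mu(\beta_{11}-\alpha_{11})$ with $\mu<0$ has the opposite sign, matching \eqref{eq:bd} because now $\gamma_{\tau2}-\gamma_{\tau1}<0$. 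This yields ${\mathcal W}\cap{\mathcal B}={\mathcal W}\cap{\mathcal B}_{2-react}$. In the special case $[\tau]=\{\tau\}$, \eqref{eq:varphi}--\eqref{eq:gamma} give $\gamma_{\tau j}=\alpha_{\tau j}-\min\{\alpha_{\tau1},\alpha_{\tau2}\}$, so $\gamma_{\tau2}-\gamma_{\tau1}=\alpha_{\tau2}-\alpha_{\tau1}$ and \eqref{eq:bd} becomes \eqref{eq:sbd}.

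The main obstacle I expect is the sign bookkeeping in the middle step: showing that every auxiliary factor of \eqref{eq:bwd} is strictly positive (not merely nonnegative) on ${\mathcal W}$, which relies on $I$ being nonempty and on distinct class representatives having distinct ratios $B_k/A_k$, and then carrying out the index shift from species $1$ (appearing in ${\mathcal B}$) to species $\tau$ (appearing in ${\mathcal B}_{2-react}$). The conceptual crux making the two cases align is deducing $\mu<0$ from the mere existence of a positive steady state.
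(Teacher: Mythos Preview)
Your proposal is correct and follows essentially the same route as the paper: the paper's proof consists of Lemma~\ref{lm:bd}, which carries out exactly the sign analysis you describe (identifying ${\mathcal L}$ as a singleton, using Lemma~\ref{lm:well-defined}(2) and Lemma~\ref{rmk:mathcalc} for positivity of the auxiliary factors, and invoking $(\beta_{11}-\alpha_{11})(\beta_{12}-\alpha_{12})<0$ together with $A_\tau>0$ to pass from species~$1$ to species~$\tau$), and then says the rest is ``similar to the proof of Theorem~\ref{thm:bdm}.'' The only substantive difference is packaging: the paper cites \cite[Lemma~4.1]{Joshi:Shiu:Multistationary} both for the fact that a two-reaction network with a positive steady state has one-dimensional stoichiometric subspace and for $(\beta_{11}-\alpha_{11})(\beta_{12}-\alpha_{12})<0$, whereas you re-derive the latter yourself via the $\mu<0$ argument but never explicitly justify the former---you simply assert ``because the stoichiometric subspace is one-dimensional'' when you need it. You should add the one-line observation (or cite \cite{Joshi:Shiu:Multistationary}) that if the two reaction vectors were independent, $f(x)=0$ would force each monomial to vanish and hence preclude positive steady states, so $cap_{pos}(G)\ge 1$ forces the one-dimensional hypothesis of Theorem~\ref{thm:bdm}; the case $cap_{pos}(G)=0$ is trivial.
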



 \begin{example}\label{ex:main1}
 This example illustrates how to compute $cap_{stab}(G)$ using Theorem \ref{thm:bdm}.
  Consider the following network $G$
 $$ X_2\xrightarrow{\kappa_1} X_1,\;\;\;
 X_1\xrightarrow{\kappa_2} X_2,\;\;\; 2X_1+X_2\xrightarrow{\kappa_3} \textcolor{black}{3}X_1. $$
By \eqref{eq:network}, for this network, we have $s=2$.
Note that there is only one total constant $c_1$ since $s-1=1$.
By \cite[Theorem 2.6]{tx2020}, we know $cap_{pos}(G)=3$. The key idea of computing $cap_{stab}(G)$ is to check if we have $ {\mathcal W}\cap {\mathcal B}\neq \emptyset$
for the two sets ${\mathcal W}$ \eqref{eq:witness} and ${\mathcal B}$ \eqref{eq:bwd}.
In fact, we have
\begin{align}\label{eq:main1}
(\kappa_1^*=\frac{1}{2},\kappa_2^*=16, \kappa_3^*=\textcolor{black}{\frac{3}{2}}, c_1^*=-9)\in {\mathcal W}\cap {\mathcal B}
\end{align}
(we list the details in the next paragraph).
Therefore, by Theorem \ref{thm:bdm} (b), we have $cap_{stab}(G)=\frac{cap_{pos}(G)+1}{2}=2$.

Below, we show that \eqref{eq:main1} is true. By \eqref{eq:network}, for this network, we have  $\beta_{11}-\alpha_{11}=1$ and $\beta_{21}-\alpha_{21}=-1$.
So, for the total constant $c_1^*=-9$ ($\in \mathbb{R}^1$), by \eqref{eq:a}, we have $A_1=1$ and $B_1=0$, and by \eqref{eq:b},
we have $A_2=-1$ and $B_2=9$. Hence, it is straightforward to verify that there are two equivalence classes defined in \eqref{eq:iec}:
$$[1]=\{1\},\;\text{and}\;\; [2]=\{2\},$$
and the set of indices defined in \eqref{eq:nindexj1} is ${\mathcal J}=\{1,2\}$.
By \eqref{eq:varphi}, for each equivalence class, we have
 $$\varphi_1=\min\limits_{1\leq j\leq 3} \{\alpha_{1j}\}=0,\;\text{and}\;\varphi_2=\min\limits_{1\leq j\leq 3} \{\alpha_{2j}\}=0,$$
 and so,  by \eqref{eq:gamma}, we have $\gamma_{ij}=\alpha_{ij}$ for every $i\in \{1, 2\}$ and for every $j\in \{1, 2, 3\}$.
Hence, the set of indices defined in \eqref{eq:nindexi} is ${\mathcal H}=\{1,2\}$, and the open interval defined in \eqref{eq:tinterval} is $I=I_1\cap I_2=(0,9)$, where
$I_1=(0,+\infty)$ and $I_2=(0,9)$.
Note that the left endpoint of $I$ is $0$. So,
the index $\tau$ defined in \eqref{eq:setk} is
 $\tau=1$, and the set of indices defined in \eqref{eq:L} is ${\mathcal L}=\{1\}$ (the set of indices $j$ such that $\gamma_{1j}=0$).
 So, for the rate constants $\kappa_1^*=\frac{1}{2},\kappa_2^*=16$, and $\kappa_3^*=\textcolor{black}{\frac{3}{2}}$,
 the left-hand side of the inequality in \eqref{eq:bwd} is
$$
 \sum\limits_{j\in \mathcal L}(\beta_{j1}-\alpha_{j1})\kappa^*_{j}\prod\limits_{i\in {\mathcal J}}|A_i|^{\alpha_{ij}}\prod\limits_{i\in \{1, \ldots, s\}\backslash{\mathcal J}}B_i^{\alpha_{ij}} \prod \limits_{k\in {\mathcal H}, k\neq \tau}(\frac{B_k}{|A_k|}-\frac{A_k}{|A_k|}\frac{B_{\tau}}{A_{\tau}})^{\gamma_{kj}}
$$
$$ =(\beta_{11}-\alpha_{11})\kappa_1^*\prod\limits_{i=1}^2|A_i|^{\alpha_{i1}}
\left(\frac{B_2}{|A_2|}-\frac{A_2}{|A_2|}\frac{B_1}{A_1}\right)^{\gamma_{21}}=\frac{9}{2}>0.
 $$
 That means, we have
 \begin{align*}
(\kappa_1^*=\frac{1}{2},\kappa_2^*=16, \kappa_3^*=\textcolor{black}{\frac{3}{2}}, c_1^*=-9)\in {\mathcal B}
\end{align*}
Note that by \cite[Theorem 2.6]{tx2020},  for these parameters, G has three (the maximum number) nondegenerate positive steady states. So, the above choice of parameters is also in
the set of witnesses ${\mathcal W}$.
Hence, we indeed have \eqref{eq:main1}.
 \end{example}

  \begin{example}\label{ex:counter}
  Consider the network $G$
 $$ 2X_1 + 2X_2 + X_3 \xrightarrow{\kappa_1} 3X_1 + X_2,\;\;
 X_1+2X_3 \xrightarrow{\kappa_2} X_2 + 3X_3. $$
 We show by Corollary \ref{cry:bd} that for this network, the answer to Question \ref{question1} is negative.

First, we point out that $cap_{pos}(G)=3$. In fact,
 from the total degree of the steady-state system $h$ shown later in Example \ref{ex:counter2},
 we see that the number of positive solutions of $h=0$ is upper bounded by $3$.
 And it is straightforward to check that for the rate constants $\kappa_1^*=1$, and $\kappa_2^*=\frac{8}{3}$, G has three nondegenerate positive steady states in the stoichiometric compatibility class ${\mathcal P}_{c^*=(-3,-\frac{11}{4})}$.

Next, we will prove that $cap_{stab}(G)=1$.
By Corollary \ref{cry:bd} (b), it is sufficient to show that
${\mathcal W}\cap {\mathcal B}_{2-react}=\emptyset$ (here, ${\mathcal B}_{2-react}$ is defined in \eqref{eq:bd}). 
\textcolor{black}{
By \eqref{eq:network}, for this network, we have $s=3$, $m=2$,  $\beta_{11}-\alpha_{11}=1$, $\beta_{21}-\alpha_{21}=-1$, and $\beta_{31}-\alpha_{31}=-1$.} Notice that the values of $A_i$'s defined in \eqref{eq:a}--\eqref{eq:b} have nothing to do with the choice of the total-constant vector
$c^*$ ($A_i$'s only depend on the values of $\alpha_{ij}$ and $\beta_{ij}$). Indeed, for this network,  by \eqref{eq:a} and \eqref{eq:b}, we have $A_1=1$, $A_2=-1$, and $A_3=-1$ for any choice of total-constant vector $c^*\in \mathbb{R}^2$. So,
by \eqref{eq:iinterval} and \eqref{eq:tinterval}, the left endpoint of the interval $I$ defined in \eqref{eq:tinterval} is always $0$. Hence,
the index $\tau$ defined in \eqref{eq:setk} is
 $\tau=1$.
 Note also, it is straightforward to check by the conservation laws (see Example \ref{ex:counter2}) that
 if $(\kappa^*, c^*)\in {\mathcal W}$, then
 $c_1^*\neq 0$ and $c_2^*\neq 0$. So, by
 \eqref{eq:a}--\eqref{eq:iec},
 there is only one index $\tau$ in $[\tau]$.
  Therefore, by Corollary \ref{cry:bd}, for any $(\kappa^*, c^*)\in {\mathcal W}$,
the left-hand side of the inequality \eqref{eq:sbd} is always
\textcolor{black}{
$$(\alpha_{\tau 2}-\alpha_{\tau 1})(\beta_{\tau 1}-\alpha_{\tau 1})=(\alpha_{1 2}-\alpha_{1 1})(\beta_{1 1}-\alpha_{1 1})=-1<0.$$}
That means ${\mathcal W}\cap {\mathcal B}_{2-react}=\emptyset$ for this example.
 \end{example}

\section{Univariate Polynomials}\label{sec:unipoly}
\begin{definition}\label{def:multiplicity}
Suppose $p(z)$ is a univariate polynomial in ${\mathbb R}[z]$, we say  $z^*\in {\mathbb R}$ is a \defword{simple} solution of $p(z)=0$ if
$p(z^*)=0$ and $\frac{d p}{d z}(z^*)\neq 0$. We say  $z^*\in {\mathbb R}$ is a \defword{multiplicity-$N$ ($N\geq 2$)} solution  of $p(z)=0$ if
$p(z^*)=0~, \;  \frac{d p}{d z}(z^*)=0$, $\ldots, \frac{d^{N-1} p}{d z^{N-1}}(z^*)=0$ and $\frac{d^N p}{d z^N}(z^*)\neq 0.$
\end{definition}

\begin{lemma}\label{lm:uni}
Let $p(z):=a_nz^n+\cdots+a_1z+a_0$ be a univariate polynomial in ${\mathbb R}[z]$. If
$z_1$ and $z_2$ are two distinct real solutions of
the equation
$p(z)=0$, and any real solution of $p(z)=0$
in the open interval $(z_1, z_2)$ has an even multiplicity, then
 we have $\frac{d p}{d z}(z_1)\frac{d p}{d z}(z_{2})\leq 0$.
\end{lemma}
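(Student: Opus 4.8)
The plan is to reduce everything to a sign analysis of $p$ on the closed interval between $z_1$ and $z_2$. Without loss of generality I would assume $z_1 < z_2$. First I would dispose of the trivial case: if either $\frac{dp}{dz}(z_1)=0$ or $\frac{dp}{dz}(z_2)=0$, then the product $\frac{dp}{dz}(z_1)\frac{dp}{dz}(z_2)$ is $0$, so the desired inequality $\le 0$ holds immediately. Hence I may assume both derivatives are nonzero, which by Definition \ref{def:multiplicity} means both $z_1$ and $z_2$ are \emph{simple} roots of $p$. The goal then sharpens to proving the strict inequality $\frac{dp}{dz}(z_1)\frac{dp}{dz}(z_2) < 0$, i.e. that the two derivatives have opposite signs.

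The heart of the argument is the following claim: \emph{$p$ has constant sign on the open interval $(z_1, z_2)$}, in the sense that there is a fixed $\sigma \in \{+1,-1\}$ with $\mathrm{sign}(p(z)) = \sigma$ for every $z \in (z_1, z_2)$ at which $p(z) \neq 0$. To prove this, list the real roots of $p$ lying strictly inside the interval as $w_1 < \cdots < w_\ell$; by hypothesis each $w_i$ has even multiplicity. On each of the subintervals $(z_1, w_1), (w_1, w_2), \ldots, (w_\ell, z_2)$ the polynomial $p$ has no zero, so by the intermediate value theorem it keeps a constant sign there. Crossing each interior root $w_i$, its even multiplicity guarantees that $p$ does not change sign; one sees this by factoring $p(z) = (z - w_i)^{2m_i} g(z)$ with $g(w_i) \neq 0$ and noting $(z-w_i)^{2m_i} > 0$ for $z \neq w_i$. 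Splicing these facts together shows the sign is identical on every subinterval, which establishes the claim.

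Finally I would translate the constant sign $\sigma$ into statements about the two derivatives. Because $z_1$ is a simple root, the local expansion $p(z) = \frac{dp}{dz}(z_1)\,(z - z_1) + o(z - z_1)$ shows that for $z$ slightly larger than $z_1$ the sign of $p(z)$ equals $\mathrm{sign}\!\left(\frac{dp}{dz}(z_1)\right)$, whence $\sigma = \mathrm{sign}\!\left(\frac{dp}{dz}(z_1)\right)$. Symmetrically, because $z_2$ is a simple root, for $z$ slightly smaller than $z_2$ the sign of $p(z)$ equals $\mathrm{sign}\!\left(-\frac{dp}{dz}(z_2)\right)$, so $\sigma = -\,\mathrm{sign}\!\left(\frac{dp}{dz}(z_2)\right)$. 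Comparing these two identities gives $\mathrm{sign}\!\left(\frac{dp}{dz}(z_1)\right) = -\,\mathrm{sign}\!\left(\frac{dp}{dz}(z_2)\right)$, that is $\frac{dp}{dz}(z_1)\frac{dp}{dz}(z_2) < 0$, which completes the proof.

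I expect the only delicate step to be the constant-sign claim, namely making rigorous that an even-multiplicity interior root does not flip the sign of $p$ and that this, combined with the intermediate value theorem on the root-free subintervals, forces a single global sign across $(z_1, z_2)$. Everything else, including the trivial vanishing-derivative case and the local sign reading from the simple-root expansion, is routine.
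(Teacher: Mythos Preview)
Your proof is correct but proceeds differently from the paper. The paper's argument is purely algebraic: it factors $p(z)=(z-z_1)(z-z_2)h(z)$, computes directly that $p'(z_1)p'(z_2)=-(z_1-z_2)^2 h(z_1)h(z_2)$, and then pulls the even-multiplicity interior roots out of $h$ to see that $h(z_1)h(z_2)\ge 0$. Your route is analytic: you dispose of the degenerate case, then argue via the intermediate value theorem that $p$ keeps a constant sign on $(z_1,z_2)$ because even-multiplicity roots do not flip signs, and finally read off the signs of $p'(z_1)$ and $p'(z_2)$ from the local first-order expansions at the simple endpoints. The paper's factorization handles all cases uniformly without splitting off the vanishing-derivative case, and the key identity $p'(z_1)p'(z_2)=-(z_1-z_2)^2 h(z_1)h(z_2)$ makes the sign conclusion immediate; your argument is more elementary and makes transparent \emph{why} the inequality holds, namely that $p$ cannot change sign between $z_1$ and $z_2$.
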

\begin{proof}
We write
$p(z) =(z-z_1)(z-z_2)h(z)$, where $h(z)\in {\mathbb R}[z]$. Note that
    $$\frac{d p}{d z}(z) \;=\; (z-z_2)h(z) + (z-z_1)h(z) + (z-z_1)(z-z_{2})\frac{d h}{d z}(z).$$
    So,
    \begin{align}\label{eq:uni}
  \frac{d p}{d z}(z_1)\frac{d p}{d z}(z_{2}) \;=\; -(z_1-z_2)^{2}h(z_1)h(z_2).
  \end{align}
  Let ${\mathcal V}:=\{b\in (z_1, z_2)|p(b)=0\}$. Then, we can write
$h(z)=\tilde h(z)\prod_{b\in {\mathcal V}}(z-b)^{m_b}$, where $m_b$ is an even number, and
$\tilde h(z)=0$ has no real solutions in $(z_1, z_2)$ (which implies $\tilde  h(z_1)\tilde  h(z_2)\geq 0$). So,
by \eqref{eq:uni}, we have
 $$\frac{d p}{d z}(z_1)\frac{d p}{d z}(z_{2}) \;=\; -(z_1-z_2)^{2}\tilde  h(z_1)\tilde  h(z_2)\prod_{b\in {\mathcal V}}(z_1-b)^{m_b}(z_2-b)^{m_b}\leq 0.$$
 We are done.
 \end{proof}






\begin{lemma}\label{lm:signend}
 Let $p(z)$ be a univariate polynomial in ${\mathbb R}[z]$. Suppose the equation
$p(z)=0$ has exactly $N$ $(N>0)$ real solutions $z^{(1)}<\cdots<z^{(N)}$ in an open interval $(a, M)$ ($a\in {\mathbb R}$, and $M\in {\mathbb R}\cup \{+\infty\}$), where
$z^{(1)}$ is simple, and any multiple solution has an odd multiplicity.  If $p(a)\neq 0$,
 then
 ${\tt sign}(p(a)) = -{\tt sign}(\frac{d p}{d z}(z^{(1)}))$.
 \end{lemma}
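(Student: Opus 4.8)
The plan is to exploit the hypothesis that $z^{(1)}$ is the \emph{smallest} root of $p$ in $(a,M)$, which forces $p$ to keep a constant sign on the half-open interval $[a, z^{(1)})$ and thereby links $p(a)$ to the local behaviour of $p$ at $z^{(1)}$. First I would observe that, since $z^{(1)}$ is by assumption the smallest real solution of $p(z)=0$ in $(a,M)$ and since $p(a)\neq 0$ by hypothesis, the polynomial $p$ has no zero anywhere on $[a, z^{(1)})$. By continuity of $p$ together with the intermediate value theorem, $p$ therefore maintains a single sign throughout $[a, z^{(1)})$, so that ${\tt sign}(p(a)) = {\tt sign}(p(z))$ for every $z\in [a, z^{(1)})$. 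In particular, the sign of $p(a)$ equals the sign of $p$ at points immediately to the left of $z^{(1)}$.

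Next I would use that $z^{(1)}$ is a \emph{simple} solution, so that $\frac{d p}{d z}(z^{(1)})\neq 0$ and the first-order expansion $p(z) = \frac{d p}{d z}(z^{(1)})\,(z - z^{(1)}) + o(z - z^{(1)})$ controls the behaviour of $p$ near $z^{(1)}$. For $z<z^{(1)}$ sufficiently close to $z^{(1)}$, the factor $(z-z^{(1)})$ is negative, and hence ${\tt sign}(p(z)) = -{\tt sign}\!\left(\frac{d p}{d z}(z^{(1)})\right)$. Combining this with the constant-sign conclusion from the first step gives ${\tt sign}(p(a)) = -{\tt sign}\!\left(\frac{d p}{d z}(z^{(1)})\right)$, which is exactly the asserted identity. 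Equivalently, one could phrase the second step as a two-case check on the sign of $\frac{d p}{d z}(z^{(1)})$, reading off whether $p$ is below or above $0$ just to the left of $z^{(1)}$.

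I do not expect any genuine obstacle; the argument is elementary once the correct interval $[a, z^{(1)})$ is isolated. The only point demanding a little care is to be certain that no zero of $p$ intrudes into $[a, z^{(1)})$, and this is precisely secured by the two hypotheses that $p(a)\neq 0$ and that $z^{(1)}$ is minimal among the roots in $(a,M)$. It is worth remarking that the remaining hypothesis — that the other solutions $z^{(2)}<\cdots<z^{(N)}$ may have odd multiplicity — plays no role in this statement, since everything is decided on $[a, z^{(1)})$, where $p$ does not vanish; that hypothesis is presumably needed only in the companion results concerning the sign of $p$ at the opposite end of the interval.
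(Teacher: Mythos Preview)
Your proof is correct, and it is genuinely different from (and more elementary than) the paper's argument. The paper factors $p(z) = C(z)\prod_{i=1}^N (z-z^{(i)})^{m_i}$ globally, computes $\frac{dp}{dz}(z^{(1)}) = C(z^{(1)})\prod_{k\ge 2}(z^{(1)}-z^{(k)})^{m_k}$, argues that $C$ cannot change sign on $[a,M)$, and then splits into two cases according to the parity of $N$ to match the sign of the product $\prod_{i}(a-z^{(i)})^{m_i}$ against that of $\prod_{k\ge 2}(z^{(1)}-z^{(k)})^{m_k}$. That bookkeeping is exactly where the odd-multiplicity hypothesis enters, since it makes each $m_i$ odd and hence the two products' signs depend only on $N$.

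Your route sidesteps all of this: by restricting attention to the zero-free interval $[a,z^{(1)})$ and reading off the one-sided sign of $p$ from the first-order Taylor expansion at the simple root $z^{(1)}$, you never touch $z^{(2)},\ldots,z^{(N)}$ at all. You are therefore right that the odd-multiplicity assumption is superfluous for the lemma as stated; the paper carries it because its global factorization needs it, not because the conclusion does. The paper's approach has the mild advantage of yielding the Brouwer-degree interpretation alluded to in the subsequent remark (the alternating signs of $\frac{dp}{dz}$ at all $z^{(i)}$ fall out of the same computation), whereas your argument is sharper and cleaner for the specific claim about $p(a)$.
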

 \begin{proof}
 We write
 \begin{align}\label{eq:tildegc}
 p(z)\;=\;C(z)\prod\limits_{i=1}^N(z-z^{(i)})^{m_i},
 \end{align}
 where $C(z)$ is a non-zero polynomial in ${\mathbb R}[z]$, $m_1=1$, and for any $i\geq 2$, $m_i$ is odd.
 Then, we have
 \begin{align*}
 \frac{d p}{d z}(z)\;=\;\frac{dC}{d z}(z)\prod\limits_{i=1}^N(z-z^{(i)})^{m_i}+
 C(z)\sum\limits_{i=1}^N m_i(z-z^{(i)})^{m_i-1}\prod\limits_{k\neq i}(z-z^{(k)})^{m_k}.
 \end{align*}
 Thus, we have
 \begin{align}\label{eq:dtildegc}
\frac{d p}{d z}(z^{(1)})\;=\;
 C(z^{(1)})\prod\limits_{k=2}^N(z^{(1)}-z^{(k)})^{m_k}.
 \end{align}
 Note that $C(a)\neq 0$ since $p(a)\neq 0$. Also, $C(z)$ does not change its sign over $(a,M)$ since $p(z)=0$ has exactly $N$ solutions in $(a,M)$. Hence,
for any $z\in (a, M)$, $C(z)$ has the same sign with
$C(z^{(1)})$. So, ${\tt sign}(C(a))={\tt sign}(C(z^{(1)}))$ since
$C(z)$ is continuous.
Therefore, by \eqref{eq:tildegc} and \eqref{eq:dtildegc},
the sign of $p(a)$ is equal to
$$
\begin{cases}
-{\tt sign}(C(a))= -{\tt sign}(C(z^{(1)}))=-{\tt sign}(\frac{d p}{d z}(z^{(1)})), & \text{if}\; N \;\text{is odd},\\
{\tt sign}(C(a))= {\tt sign}(C(z^{(1)}))=-{\tt sign}(\frac{dp}{d z}(z^{(1)})), & \text{if}\; N \;\text{is even}.
\end{cases}$$
We complete the proof.
\end{proof}

\section{Networks with one-dimensional stoichiometric subspaces}\label{sec:dim1}

In this section, we focus on the networks with one-dimensional stoichiometric subspaces.
We present a list of nice results on these networks.
 In Section \ref{sec:ss}, we show the steady-state system (augmented with the conservation laws) can be reduced to a univariate polynomial
 (see \eqref{eq:h1}--\eqref{eq:g}).
 In Section \ref{sec:consis}, we present a criterion for stability (Lemma \ref{lm:stable}).
 In Section \ref{sec:nss}, we show a nondegenerate steady state corresponds to a simple real root of a univariate polynomial (Lemma \ref{lm:nonde}).
 In Section \ref{sec:sign}, we present a sign condition (Theorem \ref{thm:sign}), which implies
 an upper bound of the number of stable positive steady states (Corollary \ref{cry:sign-stable}).
 In Section \ref{sec:apss}, we prepare a list of lemmas for the networks admitting at least one positive steady state.

  We remark that in the first author's previous work \cite{tx2020}, Lemma \ref{lm:stable}, Lemma \ref{lm:jh}, Lemma \ref{lm:jac}, and Lemma \ref{lm:nonde} are proved for networks with two reactions (possibly reversible). Those proofs can be directly to extend to any networks with one-dimensional stoichiometric subspaces. Also, Lemmas \ref{lm:jh}--\ref{lm:nonde} are implied by the results \cite[Propositions 9.1--9.2]{ConradiPantea_Multi} on the ``reduced Jacobian matrix"
\cite[Definition 9.8]{ConradiPantea_Multi}.
So, we omit the proofs here.  Note also Theorem \ref{thm:sign} is a straightforward generalization of \cite[Theorem 3.5]{tx2020}.

\subsection{Steady States}\label{sec:ss}
 If the stoichiometric subspace of a network $G$ \eqref{eq:network} is one-dimensional, then under Assumption \ref{assumption}, for every
$j\in \{1,\ldots,m\}$,
 there exists $\lambda_j \in {\mathbb R}$ such that
\begin{align}\label{eq:scalar}
\left(
\begin{array}{c}
\beta_{1j}-\alpha_{1j}\\
\vdots\\
\beta_{sj}-\alpha_{sj}
\end{array}
\right)\;=\;
\lambda_j\left(
\begin{array}{c}
\beta_{11}-\alpha_{11}\\
\vdots\\
\beta_{s1}-\alpha_{s1}
\end{array}
\right).
\end{align}
Note here, we have $\lambda_1=1$, and by the definition of chemical reaction network (which requires $(\alpha_{1j},\ldots,\alpha_{sj})\neq (\beta_{1j},\ldots,\beta_{sj})$ in \eqref{eq:network}), we have
$\lambda_j\neq 0$ for all $j\in \{2, \ldots, m\}$.
We substitute
\eqref{eq:scalar} into $f(x)$ in \eqref{eq:sys}, and
we have
\begin{align}\label{eq:f}
f_i \;=\;\left(\beta_{i1}-\alpha_{i1}\right) \sum\limits_{j=1}^m\lambda_j\kappa_j\prod\limits_{k=1}^s x_k^{\alpha_{kj}}, \;\; i=1, \ldots, s.
\end{align}
We define the steady-state system augmented with the conservation laws:
\begin{align}
h_1 ~&:=~ f_1\;=\;\left(\beta_{11}-\alpha_{11}\right)\sum\limits_{j=1}^m\lambda_j\kappa_j\prod\limits_{k=1}^s x_k^{\alpha_{kj}} \label{eq:h1}\\
h_i ~&:= ~(\beta_{i1}-\alpha_{i1})x_1 - (\beta_{11}-\alpha_{11})x_i - c_{i-1} \;(\text{here},\; c_{i-1}\in {\mathbb R}), \;\; 2\leq i\leq s.\label{eq:h}
\end{align}

We first solve $x_i$ from $h_i=0$ for $i\in \{2, \ldots, s\}$, and then we substitute the symbolic solution into $h_1$. We get
a polynomial
\begin{align}
g(\kappa; c; x_1) &~:=~ h_1(x_1, \ldots, x_s)|_{x_2=\frac{\beta_{21}-\alpha_{21}}{\beta_{11}-\alpha_{11}}x_1-\frac{c_{1}}{\beta_{11}-\alpha_{11}}, \;\ldots,\; x_s=\frac{\beta_{s1}-\alpha_{s1}}{\beta_{11}-\alpha_{11}}x_1-\frac{c_{s-1}}
{\beta_{11}-\alpha_{11}}}\notag\\
&~=~ \left(\beta_{11}-\alpha_{11}\right)\sum\limits_{j=1}^m\lambda_j\kappa_jx_1^{\alpha_{1j}}\prod\limits_{k=2}^s (\frac{\beta_{k1}-\alpha_{k1}}{\beta_{11}-\alpha_{11}}x_1-\frac{c_{k-1}}{\beta_{11}-\alpha_{11}})^{\alpha_{kj}}. \label{eq:g}
\end{align}
Clearly, if for a rate-constant $\kappa^*$,
$x^*$ is a steady state in ${\mathcal P}_{c^*}$, then
$x^*$ is a common solution to the equations $h_1(x^*)=\ldots=h_s(x^*)=0$, and
the first coordinate of $x^*$, denoted by $x_1^*$, is a solution to the univariate equation $g(\kappa^*;c^*;x_1)=0$.
\subsection{Stability}\label{sec:consis}



  \begin{lemma}\label{lm:stable}\cite[Lemma 3.4]{tx2020}
If  the stoichiometric subspace of a network $G$ \eqref{eq:network} is one-dimensional, then a nondegenerate steady state $x^*$ is stable if and only if $\sum_{i=1}^s\frac{\partial f_i}{\partial x_i}|_{x=x^*}<0$, where
 $f_i$'s are the polynomials defined in \eqref{eq:f}.
\end{lemma}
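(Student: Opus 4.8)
The plan is to exploit the rank-one structure that one-dimensionality of $S$ forces on the Jacobian. First I would set $w := (\beta_{11}-\alpha_{11}, \ldots, \beta_{s1}-\alpha_{s1})^{\top}$, a nonzero vector spanning $S$ by Assumption \ref{assumption}, and abbreviate $\Phi(x) := \sum_{j=1}^m \lambda_j \kappa_j \prod_{k=1}^s x_k^{\alpha_{kj}}$, so that \eqref{eq:f} reads $f_i = (\beta_{i1}-\alpha_{i1})\,\Phi(x)$. Differentiating then shows that the Jacobian is the outer product ${\rm Jac}_f(x) = w\,(\nabla\Phi(x))^{\top}$, which has rank at most one. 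Since $x^*$ is a steady state and $w\neq 0$, the equality $f(x^*)=0$ forces $\Phi(x^*)=0$.

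Next I would read off the spectrum. A matrix of the form $w\,u^{\top}$ has $0$ as an eigenvalue of multiplicity at least $s-1$, and its only possibly nonzero eigenvalue is its trace $u^{\top}w$. Computing the trace here gives
\[
{\rm tr}\big({\rm Jac}_f(x^*)\big) \;=\; \sum_{i=1}^s (\beta_{i1}-\alpha_{i1})\,\frac{\partial\Phi}{\partial x_i}(x^*) \;=\; \sum_{i=1}^s \frac{\partial f_i}{\partial x_i}\Big|_{x=x^*} \;=:\; \lambda,
\]
so the quantity in the statement is exactly this single real candidate eigenvalue.

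I would then tie $\lambda$ to the nondegeneracy hypothesis. For $tw\in S$ one computes ${\rm Jac}_f(x^*)(tw) = t\,(\nabla\Phi(x^*)^{\top}w)\,w = t\lambda\,w$, so the restriction ${\rm Jac}_f(x^*)|_S$ is multiplication by $\lambda$ on the generator $w$ of $S$; hence ${\rm im}({\rm Jac}_f(x^*)|_S)=S$ if and only if $\lambda\neq 0$. In particular, the assumed nondegeneracy of $x^*$ guarantees $\lambda\neq 0$, so $\lambda$ is genuinely the unique nonzero eigenvalue. Since every other eigenvalue vanishes and $\lambda$ is real, the defining condition for stability---that all nonzero eigenvalues have negative real part---reduces to $\lambda<0$, which is precisely $\sum_{i=1}^s \frac{\partial f_i}{\partial x_i}(x^*)<0$; both directions of the ``if and only if'' then follow at once.

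The main obstacle, modest as it is, lies in the eigenvalue bookkeeping for the rank-one matrix: one must argue that the trace is the \emph{only} nonzero eigenvalue and is not hidden by a nilpotent part, and that the restriction to $S$ (which governs nondegeneracy) and the full spectrum (which governs stability) are controlled by the same scalar $\lambda$. Nondegeneracy is exactly what excludes the nilpotent case and makes these two viewpoints coincide, so I would be careful to invoke it at that precise point rather than assume the trace is nonzero a priori.
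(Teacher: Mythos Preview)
Your proposal is correct and is precisely the natural argument: the one-dimensionality forces the Jacobian to be the rank-one outer product $w\,(\nabla\Phi)^{\top}$, whose sole candidate nonzero eigenvalue is the trace $\lambda=\sum_i\partial f_i/\partial x_i$, and nondegeneracy is exactly the condition $\lambda\neq 0$ that rules out the nilpotent case. The paper itself omits the proof here, citing \cite[Lemma~3.4]{tx2020} and remarking that the argument for two-reaction networks extends directly to the one-dimensional stoichiometric case; your write-up is that extended argument in full, so there is nothing to contrast.
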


\begin{example}\label{ex:counter2}
Consider the network in Example \ref{ex:counter}.
 It is straightforward to check that
the equality \eqref{eq:scalar} holds for $\lambda_1=1$ and $\lambda_2=-1$.
  Let $\kappa_1=1$, $\kappa_2=\frac{8}{3}$, $c_1=-3$, and $c_2=-\frac{11}{4}$.
  Then we have the steady-state system $h$:
\begin{align*}
h_1&\;=\;\left(\beta_{11}-\alpha_{11}\right)
\left(\lambda_1\kappa_1\prod\limits_{k=1}^s x_k^{\alpha_{k1}}+\lambda_2\kappa_2\prod\limits_{k=1}^s x_k^{\alpha_{k2}}\right)\;=\;
x_1x_3(x_1x^2_2-\frac{8}{3}x_3), \\
h_2& \;=\; (\beta_{21}-\alpha_{21})x_1 - (\beta_{11}-\alpha_{11})x_2 - c_{1}\;=\;-x_1 -x_2 +3, \;\;\;\text{and}\\
h_3 &\;=\; (\beta_{31}-\alpha_{31})x_1 - (\beta_{11}-\alpha_{11})x_3 - c_{2}\;=\;-x_1 -x_3 + \frac{11}{4}.
\end{align*}
By solving the equations $h_1(x)=h_2(x)=h_3(x)=0$, we find three nondegenerate positive steady states:
{\scriptsize
\[x^{(1)}\approx (2.577,\;
    0.423,\;
    0.173), \;\;
x^{(2)}=(2,\;1,\;\frac{3}{4}),\;\;
x^{(3)}\approx(1.423,\;
   1.577,\;
     1.327). \]
    }
It is straightforward to check \textcolor{black}{by Lemma \ref{lm:stable}} that
only $x^{(2)}$ is stable.
Here, we complete  the computation  by {\tt Maple2020} \cite{maple}, see ``Example.mw" in Table \ref{tab:sup}.
\end{example}

\subsection{Nondegenerate Steady States}\label{sec:nss}

\begin{lemma}\label{lm:jh}\cite[Lemma 3.7]{tx2020}
Let $f$ and $h$ be the systems defined in \eqref{eq:f}--\eqref{eq:h}.
Denote  by $|{\rm Jac}_h|$ the determinant of the Jacobian matrix of $h$ with respect to $x$. Then, we have
\begin{align*}
|{\rm Jac}_h|
\;=\;(\alpha_{11}-\beta_{11})^{s-1}\sum_{i=1}^s\frac{\partial f_i}{\partial x_i}.
\end{align*}
\end{lemma}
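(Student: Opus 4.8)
The plan is to compute $|{\rm Jac}_h|$ directly from the explicit shape of the Jacobian by means of a single column operation. First I would introduce the shorthand $F := \sum_{j=1}^m \lambda_j\kappa_j\prod_{k=1}^s x_k^{\alpha_{kj}}$, so that by \eqref{eq:f} and \eqref{eq:h1} one has $f_i = (\beta_{i1}-\alpha_{i1})F$ for all $i$ and $h_1 = (\beta_{11}-\alpha_{11})F$, while \eqref{eq:h} gives $h_i = (\beta_{i1}-\alpha_{i1})x_1 - (\beta_{11}-\alpha_{11})x_i - c_{i-1}$ for $i\geq 2$. Writing out ${\rm Jac}_h$ with respect to $x=(x_1,\ldots,x_s)$, the first row is $\bigl((\beta_{11}-\alpha_{11})\pd{F}{x_1},\ldots,(\beta_{11}-\alpha_{11})\pd{F}{x_s}\bigr)$, and for each $i\geq 2$ the $i$-th row (coming from the affine function $h_i$) has exactly two nonzero entries: $\beta_{i1}-\alpha_{i1}$ in column $1$ and $-(\beta_{11}-\alpha_{11})$ in column $i$. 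In particular the lower-right $(s-1)\times(s-1)$ block spanned by rows and columns $2,\ldots,s$ equals $-(\beta_{11}-\alpha_{11})\,I_{s-1}$.

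The key observation is that $\pd{f_i}{x_i} = (\beta_{i1}-\alpha_{i1})\pd{F}{x_i}$ for every $i$ (taking $i=1$ as well, since the first-row entries already carry the factor $\beta_{11}-\alpha_{11}$), so that $\sum_{i=1}^s \pd{f_i}{x_i} = \sum_{i=1}^s (\beta_{i1}-\alpha_{i1})\pd{F}{x_i}$. I would then perform the column operation that replaces column $1$ by $C_1 + \sum_{k=2}^s \tfrac{\beta_{k1}-\alpha_{k1}}{\beta_{11}-\alpha_{11}}\,C_k$; this is well defined because $\beta_{11}-\alpha_{11}\neq 0$ by Assumption \ref{assumption}, and it leaves the determinant unchanged. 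For each row $i\geq 2$ the new column-$1$ entry becomes $(\beta_{i1}-\alpha_{i1}) + \tfrac{\beta_{i1}-\alpha_{i1}}{\beta_{11}-\alpha_{11}}\bigl(-(\beta_{11}-\alpha_{11})\bigr)=0$, while the top entry becomes $(\beta_{11}-\alpha_{11})\pd{F}{x_1} + \sum_{k=2}^s (\beta_{k1}-\alpha_{k1})\pd{F}{x_k} = \sum_{i=1}^s \pd{f_i}{x_i}$.

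After this operation the first column has a single nonzero entry, located in the top-left position, so I would finish by expanding the determinant along that column. The cofactor sign at position $(1,1)$ is $+1$, and the accompanying minor is the determinant of the untouched lower-right block $-(\beta_{11}-\alpha_{11})\,I_{s-1}$, namely $\bigl(-(\beta_{11}-\alpha_{11})\bigr)^{s-1} = (\alpha_{11}-\beta_{11})^{s-1}$. This yields $|{\rm Jac}_h| = (\alpha_{11}-\beta_{11})^{s-1}\sum_{i=1}^s \pd{f_i}{x_i}$, exactly the claimed identity. I do not anticipate any genuine obstacle: the argument is elementary linear algebra, and the only points demanding care are the cofactor sign bookkeeping (which introduces no extra factor thanks to the top-left placement) and the legitimacy of the column operation, which rests solely on $\beta_{11}-\alpha_{11}\neq 0$ from Assumption \ref{assumption}.
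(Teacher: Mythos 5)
Your computation is correct: the column operation $C_1 \mapsto C_1 + \sum_{k=2}^s \tfrac{\beta_{k1}-\alpha_{k1}}{\beta_{11}-\alpha_{11}}C_k$ zeroes out the first column below the top entry, turns the $(1,1)$-entry into $\sum_{i=1}^s \pd{f_i}{x_i}$, and the remaining minor is $\det\bigl(-(\beta_{11}-\alpha_{11})I_{s-1}\bigr)=(\alpha_{11}-\beta_{11})^{s-1}$, exactly as claimed. The paper itself omits the proof and defers to \cite[Lemma 3.7]{tx2020}, and your direct linear-algebra argument is the standard route one would expect there; no gaps.
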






\begin{lemma}\label{lm:jac}\cite[Lemma 3.8]{tx2020}
For the system $h$ defined in \eqref{eq:h1}--\eqref{eq:h} and the polynomial $g$ defined in \eqref{eq:g},
if for a rate-constant vector $\kappa^*$ and a total-constant vector $c^*$,  $x^*$ is a solution to $h_1(x^*)=\ldots =h_s(x^*)=0$, then
\begin{align*}
\left(\alpha_{11}-\beta_{11}\right)^{s-1}\pd{g}{x_1}(\kappa^*;c^*; x^*_1) \;=\;  |{\rm Jac}_h(x^*)|.
\end{align*}
\end{lemma}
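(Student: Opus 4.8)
The plan is to compute $|{\rm Jac}_h(x^*)|$ directly by exploiting the very simple structure of the rows coming from the linear conservation laws $h_2,\ldots,h_s$, and then to recognize the resulting scalar as $\pd{g}{x_1}$ via the chain rule. Set $a := \beta_{11}-\alpha_{11}$ (nonzero by Assumption~\ref{assumption}) and $b_i := \beta_{i1}-\alpha_{i1}$ for $i \in \{2,\ldots,s\}$. From \eqref{eq:h}, each row $i \geq 2$ of ${\rm Jac}_h$ satisfies $\frac{\partial h_i}{\partial x_1}=b_i$, $\frac{\partial h_i}{\partial x_i}=-a$, and all other entries vanish. Writing $p_j := \frac{\partial h_1}{\partial x_j}(x^*)$, the Jacobian at $x^*$ is thus the bordered matrix whose first row is $(p_1,\ldots,p_s)$, whose first column below the top entry is $(b_2,\ldots,b_s)^{\top}$, and whose lower-right $(s-1)\times(s-1)$ block is $-a\,I_{s-1}$.

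First I would compute this determinant. Since the lower-right block $-a\,I_{s-1}$ is invertible, the Schur-complement formula gives
\begin{align*}
|{\rm Jac}_h(x^*)| \;=\; \det(-a\,I_{s-1})\Big(p_1 - \sum_{i=2}^s p_i\,(-a)^{-1} b_i\Big) \;=\; (-a)^{s-1}\Big(p_1 + \sum_{i=2}^s \frac{b_i}{a}\,p_i\Big).
\end{align*}
Equivalently, and more elementarily, one clears the entries $b_2,\ldots,b_s$ in the first column by the determinant-preserving column operation $C_1 \mapsto C_1 + \sum_{i=2}^s \frac{b_i}{a}\,C_i$, after which the matrix is triangular with diagonal $\big(p_1 + \sum_{i=2}^s \frac{b_i}{a}\,p_i,\,-a,\ldots,-a\big)$, giving the same value.

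Next I would identify the scalar factor with $\pd{g}{x_1}(\kappa^*;c^*;x_1^*)$. By \eqref{eq:g}, $g$ is obtained from $h_1$ by substituting $x_i = \frac{b_i}{a}\,x_1 - \frac{c_{i-1}}{a}$ for $i \geq 2$; writing these as $x_i = \phi_i(x_1)$, the chain rule yields $\frac{d g}{d x_1} = \frac{\partial h_1}{\partial x_1} + \sum_{i=2}^s \frac{\partial h_1}{\partial x_i}\,\phi_i'(x_1)$ with $\phi_i'(x_1)=\frac{b_i}{a}$. The hypothesis $h_2(x^*)=\cdots=h_s(x^*)=0$ is precisely the statement that $x_i^* = \phi_i(x_1^*)$ for $i \geq 2$, so the partial derivatives of $h_1$ in the chain rule are evaluated exactly at $x^*$; hence $\pd{g}{x_1}(\kappa^*;c^*;x_1^*) = p_1 + \sum_{i=2}^s \frac{b_i}{a}\,p_i$. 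Substituting this into the determinant and using $(-a)^{s-1} = (\alpha_{11}-\beta_{11})^{s-1}$ produces the claimed identity.

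The computation is essentially mechanical, so the one point that requires care — and the step I would expect a careless reader to trip over — is the final matching: the chain-rule expression for $\frac{d g}{d x_1}$ coincides with the Schur-complement factor only after invoking the hypothesis $h_i(x^*)=0$ to guarantee that the substituted coordinates $\phi_i(x_1^*)$ agree with the actual coordinates $x_i^*$. Off the solution set this identity need not hold, so the evaluation point is what makes the two sides match.
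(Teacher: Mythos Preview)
Your proof is correct. The paper does not actually give a proof of this lemma: it cites \cite[Lemma 3.8]{tx2020} and remarks that the argument there extends directly to the one-dimensional stoichiometric setting (and that it is also implied by \cite[Propositions 9.1--9.2]{ConradiPantea_Multi}), then omits the details. Your direct computation---exploiting the bordered structure of ${\rm Jac}_h$ via the Schur complement (or equivalently the column operation that triangularizes the matrix), followed by the chain-rule identification of the scalar factor with $\pd{g}{x_1}$---is precisely the natural argument one would expect in this situation, and your emphasis on the role of the hypothesis $h_2(x^*)=\cdots=h_s(x^*)=0$ in matching the evaluation points is well placed.
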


\begin{lemma}\label{lm:nonde}\cite[Lemma 3.11]{tx2020}
Given a network $G$ \eqref{eq:network}, suppose the stoichiometric subspace of $G$ is one-dimensional.
Let $h$ and $g$ be defined as in \eqref{eq:h1}--\eqref{eq:g}.
If for a rate-constant vector $\kappa^*$,  there is a steady state $x^*$ in the stoichiometric compatibility class ${\mathcal P}_{c^*}$, then the following statements are equivalent.
\begin{itemize}
\item[(i)] The steady state $x^*$ is nondegenerate.
\item[(ii)] $|{\rm Jac}_h(x^*)|\neq 0$.
\item[(iii)] $\pd{g}{x_1}(\kappa^*;c^*; x^*_1)\neq 0$.
\end{itemize}
\end{lemma}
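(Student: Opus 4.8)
The plan is to establish the chain of equivalences in two stages: first the ``bookkeeping'' equivalence (ii)$\Leftrightarrow$(iii), which is immediate from the earlier lemmas, and then the substantive equivalence (i)$\Leftrightarrow$(ii), which requires unwinding the definition of nondegeneracy using the one-dimensionality of $S$.

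For (ii)$\Leftrightarrow$(iii) I would simply invoke Lemma \ref{lm:jac}: since $x^*$ solves $h_1(x^*)=\cdots=h_s(x^*)=0$, we have $(\alpha_{11}-\beta_{11})^{s-1}\pd{g}{x_1}(\kappa^*;c^*;x_1^*)=|{\rm Jac}_h(x^*)|$. By Assumption \ref{assumption}, $\beta_{11}-\alpha_{11}\neq 0$, so the prefactor $(\alpha_{11}-\beta_{11})^{s-1}$ is nonzero and the two quantities vanish simultaneously, giving (ii)$\Leftrightarrow$(iii) at once.

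For (i)$\Leftrightarrow$(ii), the key observation is that for a one-dimensional stoichiometric subspace the nondegeneracy condition collapses to a single scalar condition, namely the nonvanishing of the trace $\sum_{i=1}^s\pd{f_i}{x_i}$. Write $\psi(x):=\sum_{j=1}^m\lambda_j\kappa_j\prod_{k=1}^s x_k^{\alpha_{kj}}$, so that by \eqref{eq:f} each $f_i=(\beta_{i1}-\alpha_{i1})\psi$, and let $v:=(\beta_{11}-\alpha_{11},\ldots,\beta_{s1}-\alpha_{s1})^{\top}$ be the vector spanning $S$ (by \eqref{eq:scalar} every column of ${\mathcal N}$ is a multiple of this one). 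Because every $f_i$ is the common function $\psi$ scaled by the $i$-th entry of $v$, we get $({\rm Jac}_f(x^*)\,w)=\bigl(\sum_{\ell}\pd{\psi}{x_\ell}(x^*)\,w_\ell\bigr)v$ for every $w$, so ${\rm Jac}_f(x^*)$ sends all of $\R^s$ into the line $S$ and its restriction ${\rm Jac}_f(x^*)|_S$ is a well-defined endomorphism of $S$. Evaluating at $w=v$ yields
$$
{\rm Jac}_f(x^*)\,v \;=\; \Bigl(\sum_{\ell=1}^s\pd{\psi}{x_\ell}(x^*)(\beta_{\ell1}-\alpha_{\ell1})\Bigr)v \;=\; \Bigl(\sum_{i=1}^s\pd{f_i}{x_i}(x^*)\Bigr)v,
$$
so the restricted map acts as multiplication by the trace. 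Hence ${\rm im}({\rm Jac}_f(x^*)|_S)=S$ holds if and only if this scalar is nonzero, i.e. (i) is equivalent to $\sum_{i=1}^s\pd{f_i}{x_i}(x^*)\neq 0$.

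Finally I would close the loop with Lemma \ref{lm:jh}, which states $|{\rm Jac}_h|=(\alpha_{11}-\beta_{11})^{s-1}\sum_{i=1}^s\pd{f_i}{x_i}$. Again using $\beta_{11}-\alpha_{11}\neq 0$, the determinant $|{\rm Jac}_h(x^*)|$ is nonzero precisely when the trace is nonzero, giving (i)$\Leftrightarrow$(ii) and completing the chain. The only genuinely nontrivial point, and the step I expect to be the main obstacle, is the reduction of the nondegeneracy condition to the scalar trace condition; it hinges on the fact, special to one-dimensional $S$, that all components $f_i$ share the single factor $\psi$, so that the Jacobian acts on the line $S$ as multiplication by $\sum_i\pd{f_i}{x_i}$. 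Everything else is a routine application of Lemmas \ref{lm:jh} and \ref{lm:jac} together with Assumption \ref{assumption}.
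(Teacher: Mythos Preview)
Your proof is correct. The paper itself omits the proof of this lemma, citing \cite[Lemma 3.11]{tx2020} and remarking that it also follows from \cite[Propositions 9.1--9.2]{ConradiPantea_Multi} on the reduced Jacobian matrix; your argument is the natural self-contained version, using exactly the two ingredients the paper has already set up (Lemmas \ref{lm:jh} and \ref{lm:jac}) together with the rank-one structure ${\rm Jac}_f=v\,(\nabla\psi)^\top$ to reduce nondegeneracy to the scalar trace condition.
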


\subsection{Sign condition}\label{sec:sign}

\begin{theorem}\label{thm:sign}
\textcolor{black}{
Given a network $G$,
suppose the stoichiometric subspace of $G$ is one-dimensional.}
Let $h$ be the system defined in \eqref{eq:h1}--\eqref{eq:h}.
If for a rate-constant vector $\kappa^*$, $G$ has $N$ distinct positive steady states $x^{(1)}, \ldots, x^{(N)}$ in ${\mathcal P}_{c^*}$, where these steady states are ordered  according to their first coordinates (i.e., $x_1^{(1)}<\ldots<x_1^{(N)}$),
then  for $i\in \{1, \ldots, N-1\}$, we have $|{\rm Jac}_h(x^{(i)})||{\rm Jac}_h(x^{(i+1)})|\leq 0$.
In addition, if all these positive steady states are nondegenerate, then we have $|{\rm Jac}_h(x^{(i)})||{\rm Jac}_h(x^{(i+1)})|<0$.
\end{theorem}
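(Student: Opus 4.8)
The plan is to transfer the sign statement about the Jacobian determinants into a statement about the derivative of the univariate polynomial $g$ from \eqref{eq:g}, and then invoke Lemma \ref{lm:uni}. By Lemma \ref{lm:jac}, for each positive steady state $x^{(i)}$ one has $|{\rm Jac}_h(x^{(i)})| = (\alpha_{11}-\beta_{11})^{s-1}\,\frac{\partial g}{\partial x_1}(\kappa^*;c^*;x_1^{(i)})$, so that
$$|{\rm Jac}_h(x^{(i)})|\,|{\rm Jac}_h(x^{(i+1)})| \;=\; (\alpha_{11}-\beta_{11})^{2(s-1)}\,\frac{\partial g}{\partial x_1}(x_1^{(i)})\,\frac{\partial g}{\partial x_1}(x_1^{(i+1)}).$$
Since $\beta_{11}-\alpha_{11}\neq 0$ by Assumption \ref{assumption}, the prefactor $(\alpha_{11}-\beta_{11})^{2(s-1)}$ is strictly positive, so it suffices to control the sign of $\frac{\partial g}{\partial x_1}(x_1^{(i)})\,\frac{\partial g}{\partial x_1}(x_1^{(i+1)})$.

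First I would record that each steady state is recovered from its first coordinate through the affine substitution in \eqref{eq:g}: writing $x_k = \frac{\beta_{k1}-\alpha_{k1}}{\beta_{11}-\alpha_{11}}x_1 - \frac{c^*_{k-1}}{\beta_{11}-\alpha_{11}}$ for $k\geq 2$, the whole vector $x(x_1)$ depends affinely on $x_1$, the map $x_1\mapsto x(x_1)$ is injective, and $x_1^{(1)}<\cdots<x_1^{(N)}$ are distinct roots of $g(\kappa^*;c^*;x_1)=0$.

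The geometric crux, which I expect to be the main step, is to show that $g$ has no root in the open interval $(x_1^{(i)}, x_1^{(i+1)})$. Each coordinate $x_k(x_1)$ is an affine function of $x_1$; since it takes strictly positive values at both endpoints $x_1^{(i)}$ and $x_1^{(i+1)}$ (both $x^{(i)}$ and $x^{(i+1)}$ being positive steady states), it is strictly positive on the whole segment $[x_1^{(i)}, x_1^{(i+1)}]$, because an affine function positive at both endpoints of an interval is positive throughout. Consequently every root of $g$ in $(x_1^{(i)}, x_1^{(i+1)})$ yields a point $x(x_1)$ in the open positive orthant, i.e.\ a positive steady state. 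But $x^{(1)},\dots,x^{(N)}$ are the positive steady states listed in increasing order of first coordinate, so no positive steady state has first coordinate strictly between $x_1^{(i)}$ and $x_1^{(i+1)}$; hence $g$ has no root there.

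With this no-root observation in hand, the hypothesis of Lemma \ref{lm:uni} (that every real root of $g$ in $(x_1^{(i)}, x_1^{(i+1)})$ has even multiplicity) holds vacuously with $p=g$, $z_1=x_1^{(i)}$, and $z_2=x_1^{(i+1)}$. Lemma \ref{lm:uni} then gives $\frac{\partial g}{\partial x_1}(x_1^{(i)})\,\frac{\partial g}{\partial x_1}(x_1^{(i+1)})\leq 0$, and combined with the displayed identity this yields $|{\rm Jac}_h(x^{(i)})|\,|{\rm Jac}_h(x^{(i+1)})|\leq 0$. For the strict inequality, I would invoke Lemma \ref{lm:nonde}: when all positive steady states are nondegenerate, $\frac{\partial g}{\partial x_1}(x_1^{(i)})\neq 0$ and $\frac{\partial g}{\partial x_1}(x_1^{(i+1)})\neq 0$, so the nonpositive product is in fact strictly negative, giving $|{\rm Jac}_h(x^{(i)})|\,|{\rm Jac}_h(x^{(i+1)})|<0$.
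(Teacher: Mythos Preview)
Your proof is correct and follows essentially the approach the paper's lemmas are designed to support: reduce via Lemma~\ref{lm:jac} to the sign of $\frac{\partial g}{\partial x_1}$ at consecutive roots, use the affine positivity of each coordinate $x_k(x_1)$ on $[x_1^{(i)},x_1^{(i+1)}]$ to conclude $g$ has no roots in the open interval, apply Lemma~\ref{lm:uni} (with the interior-root hypothesis satisfied vacuously), and finish the strict case with Lemma~\ref{lm:nonde}. The paper itself only cites \cite[Theorem~3.5]{tx2020} for the argument, but your write-up is exactly the natural proof using the tools the paper provides.
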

\begin{proof}
  The argument  is the similar to the proof of \cite[Theorem 3.5]{tx2020}.
\end{proof}

\begin{corollary}\label{cry:sign-stable}
\textcolor{black}{
  Given a network $G$ \eqref{eq:network}, suppose the stoichiometric subspace of $G$ is one-dimensional. If
 for a rate-constant vector $\kappa^*$ and a total-constant vector $c^*$, $G$ has exactly  $N$  positive steady states $x^{(1)}, \ldots, x^{(N)}$ $(x_1^{(1)}<\ldots<x_1^{(N)})$ in the stoichiometric compatibility class ${\mathcal P}_{c^*}$,
  then
  \begin{itemize}
\item[(a)] for any $1\leq i\leq N-1$, $x^{(i)}$ and $x^{(i+1)}$ cannot be both stable, and
\item[(b)] at most $\lceil \frac{N}{2} \rceil$ of these positive steady states are stable.
 \end{itemize}
 }
\end{corollary}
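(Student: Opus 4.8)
The plan is to prove part (a) first and then read off part (b) as a short combinatorial consequence. For (a) I would argue by contradiction: suppose that for some $i$ both $x^{(i)}$ and $x^{(i+1)}$ are stable. By the definition of stability each of them is nondegenerate, so Lemma \ref{lm:nonde} gives $|{\rm Jac}_h(x^{(i)})|\neq 0$ and $|{\rm Jac}_h(x^{(i+1)})|\neq 0$. The whole argument then reduces to controlling the \emph{signs} of these two determinants and colliding them with the alternation guaranteed by Theorem \ref{thm:sign}.

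The sign bookkeeping proceeds as follows. By Lemma \ref{lm:stable}, stability of $x^{(i)}$ means $\sum_{j=1}^s \frac{\partial f_j}{\partial x_j}\big|_{x=x^{(i)}}<0$, and likewise at $x^{(i+1)}$. By Lemma \ref{lm:jh},
\[
|{\rm Jac}_h(x^*)| \;=\; (\alpha_{11}-\beta_{11})^{s-1}\sum_{j=1}^s \frac{\partial f_j}{\partial x_j}\Big|_{x=x^*},
\]
where the prefactor $(\alpha_{11}-\beta_{11})^{s-1}$ is a single fixed nonzero constant (nonzero by Assumption \ref{assumption}) that does not depend on the steady state. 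Hence both $|{\rm Jac}_h(x^{(i)})|$ and $|{\rm Jac}_h(x^{(i+1)})|$ equal this same constant times a negative number, so they carry the same nonzero sign and their product is strictly positive. This contradicts Theorem \ref{thm:sign}, which asserts $|{\rm Jac}_h(x^{(i)})||{\rm Jac}_h(x^{(i+1)})|\leq 0$. Therefore no two consecutive steady states can both be stable, proving (a).

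For (b) I would translate (a) into a statement about indices. Ordering the steady states by their first coordinates, part (a) says that the set of indices of the stable steady states contains no two consecutive integers in $\{1,\dots,N\}$. The largest such subset is $\{1,3,5,\dots\}$, of cardinality $\lceil N/2\rceil$; hence at most $\lceil N/2\rceil$ of the $x^{(j)}$ are stable.

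The only delicate point is the sign bookkeeping in (a): one must notice that the common prefactor $(\alpha_{11}-\beta_{11})^{s-1}$ in Lemma \ref{lm:jh} is a single fixed constant, so it cannot flip the relative sign between $|{\rm Jac}_h(x^{(i)})|$ and $|{\rm Jac}_h(x^{(i+1)})|$. Once this is observed, joint stability forces the two determinants to agree in sign, in direct conflict with the alternation forced by Theorem \ref{thm:sign}; everything else, including the combinatorial bound in (b), is routine.
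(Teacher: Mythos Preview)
Your proposal is correct and follows essentially the same route as the paper: invoke Lemma~\ref{lm:stable} to express stability as negativity of the trace, use Lemma~\ref{lm:jh} to convert this into a sign statement about $|{\rm Jac}_h|$, and then apply the alternation from Theorem~\ref{thm:sign} to rule out consecutive stable states; part~(b) is then the obvious pigeonhole. The paper's proof is terser but structurally identical.
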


\begin{proof}
(a)
 By Lemma \ref{lm:stable},  a nondegenerate steady state $x^*$ is stable if and only if $\sum_{i=1}^s\frac{\partial f_i}{\partial x_i}(x^*)<0$. By Lemma \ref{lm:jh}, we have $$|{\rm Jac}_h(x^*)|=(\alpha_{11}-\beta_{11})^{s-1}\sum_{i=1}^s\frac{\partial f_i}{\partial x_i}(x^*).$$
So, the conclusion follows from Theorem \ref{thm:sign}.

(b) The conclusion directly follows from (a).
\end{proof}
\subsection{Networks Admitting Positive Steady States}\label{sec:apss}
In this section, we prove a list of lemmas for the networks admitting at least one positive steady state. First, we will factor $g(\kappa;c; x_1)$ (see \eqref{eq:sg}) for a given total-constant vector $c^*$, and we deduce a polynomial
$q(\kappa; x_1)$ (see \eqref{eq:q}). In Lemmas \ref{lm:qroot}--\ref{lm:jactildeg}, we show that
the first coordinate of a (nondegenerate) positive steady state is a (simple) real solution of the equation $q(\kappa; x_1)=0$. Following from Lemma \ref{lm:qroot}, Lemma \ref{lm:assumption2} explains why Assumption \ref{assumption2} is reasonable (also see Remark \ref{rmk:assumption2} and Example \ref{ex:assumption2}). From the equation $q(\kappa; x_1)=0$, we can symbolically solve a carefully-chosen (see \eqref{eq:ell}) parameter $\kappa_{\ell}$, and we deduce a function
$\phi$ (see \eqref{eq:phi}). In Lemma \ref{lm:well-defined}, we show a list of properties of this function, which will be useful in the next two sections.

Suppose a network $G$ \eqref{eq:network} (with $s$ species and $m$ reactions) has one-dimensional stoichiometric subspace.
Recall Section \ref{sec:ss}.
By the steady-state system $h$ augmented with conservation laws defined in \eqref{eq:h1}-\eqref{eq:h}, we can deduce a polynomial $g(\kappa;c; x_1)$ in \eqref{eq:g}.
Suppose for a total constant-vector $c^*$, the notions $[i]$, $A_i$, $B_i$, $r$, $\varphi_k$, $\gamma_{kj}$, ${\mathcal J}$, ${\mathcal H}$, $I_i$, $I$, $\tau$, and ${\mathcal L}$ are defined in \eqref{eq:iec}--\eqref{eq:L}. For every $i\in \{1, \ldots, s\}$, we define
\begin{align}
Y_i(x_1) \;:=\;
\begin{cases}
\frac{1}{|A_i|}(A_ix_1 + B_i), & i\in {\mathcal J} \; (i.e., A_i\neq 0) \\
1, & i\not\in {\mathcal J} \; (i.e., A_i=0).
\end{cases}\label{eq:y}
\end{align}
For every $j\in \{1, \ldots, m\}$, we define
\begin{align}
{\mathcal C}_j \;:=\; \prod\limits_{i\in {\mathcal J}}|A_i|^{\alpha_{ij}}\prod\limits_{i\in \{1, \ldots, s\}\backslash{\mathcal J}}B_i^{\alpha_{ij}}.\label{eq:mathcalc}
\end{align}
When $c=c^*$, 
we can write $g(\kappa; c^*; x_1)$  in \eqref{eq:g} as
\begin{align}\label{eq:sg}
g(\kappa; c^*; x_1) \;=\; q(\kappa; x_1)\prod\limits_{k=1}^rY_k(x_1)^{\varphi_k},
\end{align}
where
\begin{align}
q(\kappa; x_1) &\;:=\;(\beta_{11}-\alpha_{11})\sum_{j=1}^m\lambda_j\kappa_j\prod\limits_{i\in {\mathcal J}}|A_i|^{\alpha_{ij}}\prod\limits_{i\in \{1, \ldots, s\}\backslash{\mathcal J}}B_i^{\alpha_{ij}}
\prod\limits_{k=1}^rY_k(x_1)^{\gamma_{kj}}, \notag \\
&\;=\;(\beta_{11}-\alpha_{11})\sum_{j=1}^m\lambda_j\kappa_j{\mathcal C}_j
\prod\limits_{k\in \{1, \ldots r\}\cap {\mathcal J}}Y_k(x_1)^{\gamma_{kj}} \notag \\
 &\;=\;(\beta_{11}-\alpha_{11})\sum_{j=1}^m\lambda_j\kappa_j{\mathcal C}_j
\prod\limits_{k\in {\mathcal H}}Y_k(x_1)^{\gamma_{kj}}. \label{eq:q}
\end{align}
In \eqref{eq:q}, the second equality follows from the definition of ${\mathcal C}_j$ \eqref{eq:mathcalc} and the fact that when $k\not \in {\mathcal J}$, $Y_k(x_1)=1$ (see \eqref{eq:y}). The last equality holds because when $k\not \in {\mathcal H}$,
$\gamma_{kj}=0$. In fact, if $k\not\in {\mathcal H}$, then by \eqref{eq:nindexi},
$\gamma_{k1}, \ldots, \gamma_{km}$ are all the same. So, by \eqref{eq:varphi} and \eqref{eq:gamma},
for any $j\in \{1, \ldots, m\}$, $\gamma_{kj}=0$.

Now, for the index $\tau$ defined in \eqref{eq:setk}, we define
 \begin{align}\label{eq:ell}
 \ell\; :=\; \argmin \limits_{j\in \{1, \ldots, m\}}  \{\gamma_{\tau j}\}.
 \end{align}
 and for every $j\in \{1, \ldots, m\}$, define
 \begin{align}\label{eq:pj}
P_j(x_1)\;:=\; {\mathcal C}_j
\prod\limits_{k\in {\mathcal H}}Y_k(x_1)^{\gamma_{kj}}
 \end{align}
 Then $q(\kappa; x_1)$ in \eqref{eq:q} can be written as
 \begin{align}\label{eq:pq}
 q(\kappa; x_1) &~=~ (\beta_{11}-\alpha_{11})\sum^m\limits_{j=1}\lambda_j\kappa_jP_{j}(x_1).
\end{align}
We solve $\kappa_{\ell}$ from $q(\kappa; x_1)=0$, and we  define  the symbolic solution as a function
\begin{align}\label{eq:phi}
 \phi(\hat\kappa; x_1) &~:=~ -\frac{\sum^m\limits_{j=1, j\neq \ell}\lambda_j\kappa_jP_{j}(x_1)}{\lambda_{\ell}P_{\ell}(x_1)},
\end{align}
where $\hat \kappa := (\kappa_1, \ldots, \kappa_{\ell-1}, \kappa_{\ell+1}, \ldots, \kappa_{m})$.
Here, we recall that $\lambda_\ell\neq 0$ by \eqref{eq:scalar}.
For any $z\in {\mathbb R}$, we say $\phi(\hat\kappa; x_1)$ is \defword{well-defined at $z$} if
$P_{\ell}(z)\neq 0$.
\begin{lemma}\label{lm:ysign}
Given a network $G$,
   suppose the stoichiometric subspace of $G$ is one-dimensional.
   For a total-constant vector $c^*$,  let $I_i$ be the interval defined in \eqref{eq:iinterval}, and let $Y_i(x_1)$ be the function defined in \eqref{eq:y}.
   If $x^*_1\in I_i$,  then $Y_i(x^*_1)>0$.
\end{lemma}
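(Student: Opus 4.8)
The plan is to prove the claim by a direct case analysis on the sign of $A_i$, mirroring the case split used in the definitions of both $Y_i$ in \eqref{eq:y} and $I_i$ in \eqref{eq:iinterval}. Since each case fixes the sign of $A_i$, I can resolve the absolute value $|A_i|$ explicitly and reduce the statement to an elementary inequality comparing $x_1^*$ with the point $-\frac{B_i}{A_i}$.

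First I would dispose of the degenerate case $A_i=0$ (i.e. $i\notin{\mathcal J}$): here the second branch of \eqref{eq:y} gives $Y_i(x_1)\equiv 1$, so $Y_i(x_1^*)>0$ holds unconditionally and the hypothesis $x_1^*\in I_i$ is not even needed. Next, for $A_i>0$ I would use $|A_i|=A_i$ to rewrite $Y_i(x_1^*)=\frac{1}{A_i}(A_i x_1^*+B_i)=x_1^*+\frac{B_i}{A_i}$; the hypothesis $x_1^*\in I_i=(-\frac{B_i}{A_i},+\infty)$ says precisely $x_1^*>-\frac{B_i}{A_i}$, which is equivalent to $Y_i(x_1^*)>0$. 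Finally, for $A_i<0$ I would use $|A_i|=-A_i$ to get $Y_i(x_1^*)=\frac{1}{-A_i}(A_i x_1^*+B_i)=-\bigl(x_1^*+\frac{B_i}{A_i}\bigr)$, so that $Y_i(x_1^*)>0$ is equivalent to $x_1^*<-\frac{B_i}{A_i}$; this is exactly the upper-endpoint condition contained in $x_1^*\in I_i=(0,-\frac{B_i}{A_i})$.

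The only point requiring care, and the nearest thing to an obstacle, is tracking the sign flip that occurs when $A_i<0$: dividing by $|A_i|=-A_i$ reverses the inequality, and this is precisely why the interval $I_i$ in \eqref{eq:iinterval} is defined with reversed orientation $(0,-\frac{B_i}{A_i})$ in that branch. Matching this reversal against the corresponding branch of $Y_i$ is what makes all three cases line up, and no deeper ingredient is needed.
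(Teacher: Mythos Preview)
Your proposal is correct and follows essentially the same approach as the paper: a direct case split on the sign of $A_i$, reducing each case to an elementary inequality. The only cosmetic difference is that the paper handles the two nonzero cases $A_i>0$ and $A_i<0$ simultaneously by observing that $A_ix_1^*+B_i>A_i(-\tfrac{B_i}{A_i})+B_i=0$ (the inequality $x_1^*\gtrless -\tfrac{B_i}{A_i}$ becomes the same inequality after multiplying by $A_i$ in either case), whereas you treat them separately; the content is identical.
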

\begin{proof}
Clearly, if $A_i=0$, then by \eqref{eq:y},  we have $Y_i(x^*_1)=1>0$.
Suppose $A_i\neq 0$.
 By \eqref{eq:y},
we have $Y_i(x_1)=\frac{1}{|A_i|}(A_ix_1+B_i)$. If  $x^*_1\in I_i$, then by \eqref{eq:iinterval},
$$A_ix^*_1+B_i\;>\;A_i(-\frac{B_i}{A_i})+B_i\;=\;0.$$
So, we have $Y_i(x^*_1)>0$.
\end{proof}

\begin{lemma}\label{lm:qroot}
Given a network $G$, suppose the stoichiometric subspace of $G$ is one-dimensional.
For a total-constant vector $c^*\in {\mathbb R}^{s-1}$,
let $q(\kappa;x_1)$ be the polynomial defined in \eqref{eq:q}.
 Then, for a  rate-constant vector $\kappa^*\in {\mathbb R}_{>0}^s$, $G$ has a positive steady state  in the stoichiometric compatibility class ${\mathcal P}_{c^*}$ if and only if
 $q(\kappa^*;x_1)=0$ has a solution in $\bigcap\limits_{i=1}^sI_i$, where $I_i$ is the open interval defined in \eqref{eq:iinterval}.
\end{lemma}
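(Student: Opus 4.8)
The plan is to establish a chain of equivalences connecting the existence of a positive steady state to a root of $q$ in the intersection of the intervals $I_i$, passing through the intermediate object $g(\kappa;c^*;x_1)$ and using the factorization \eqref{eq:sg}. First I would recall from Section \ref{sec:ss} that for a fixed rate-constant vector $\kappa^*$ and total-constant vector $c^*$, a positive steady state $x^*\in{\mathcal P}_{c^*}$ corresponds exactly to a common solution of $h_1(x^*)=\cdots=h_s(x^*)=0$ with $x^*\in{\mathbb R}_{>0}^s$. Since the equations $h_2=\cdots=h_s=0$ are solved by $x_i=A_ix_1+B_i$ (by \eqref{eq:b} and the construction of $g$ in \eqref{eq:g}), the vector $x^*$ is determined by its first coordinate $x_1^*$, and $x^*$ is a steady state if and only if $x_1^*$ is a root of $g(\kappa^*;c^*;x_1)$. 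The crux is then to translate the positivity requirement $x^*\in{\mathbb R}_{>0}^s$ into the condition $x_1^*\in\bigcap_{i=1}^s I_i$.

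The key step is the observation that $x_i^*>0$ is equivalent to $x_1^*\in I_i$. I would argue this coordinate by coordinate using the definition \eqref{eq:iinterval}: when $A_i>0$, the condition $A_ix_1^*+B_i>0$ reduces to $x_1^*>-B_i/A_i$, i.e.\ $x_1^*\in(-B_i/A_i,+\infty)=I_i$; when $A_i<0$ it reduces to $x_1^*<-B_i/A_i$, i.e.\ $x_1^*\in(0,-B_i/A_i)=I_i$ after intersecting with the $i=1$ constraint $x_1^*>0$ (recall $A_1=1$, $B_1=0$, so $I_1=(0,+\infty)$); and when $A_i=0$ the coordinate $x_i^*=B_i$ is a constant whose positivity is handled separately (the interval $I_i=(0,+\infty)$ is imposed on $x_1^*$ but the genuine constraint is $B_i>0$, and one checks that when $x^*$ is an honest positive steady state this holds automatically, which is why Lemma \ref{lm:ysign} phrases things through the functions $Y_i$). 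Collecting these equivalences gives $x^*\in{\mathbb R}_{>0}^s\iff x_1^*\in\bigcap_{i=1}^s I_i$.

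Next I would connect roots of $g$ to roots of $q$ via the factorization \eqref{eq:sg}, namely $g(\kappa;c^*;x_1)=q(\kappa;x_1)\prod_{k=1}^r Y_k(x_1)^{\varphi_k}$. The point is that on the set $\bigcap_{i=1}^s I_i$ every factor $Y_k(x_1)$ is strictly positive by Lemma \ref{lm:ysign}, so the product $\prod_{k=1}^r Y_k(x_1)^{\varphi_k}$ never vanishes there. Consequently, for $x_1^*\in\bigcap_{i=1}^s I_i$ we have $g(\kappa^*;c^*;x_1^*)=0$ if and only if $q(\kappa^*;x_1^*)=0$. Chaining this with the previous paragraph yields: $G$ has a positive steady state in ${\mathcal P}_{c^*}$ $\iff$ there exists $x_1^*\in\bigcap_{i=1}^s I_i$ with $g(\kappa^*;c^*;x_1^*)=0$ $\iff$ there exists $x_1^*\in\bigcap_{i=1}^s I_i$ with $q(\kappa^*;x_1^*)=0$, which is exactly the claim.

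The main obstacle I anticipate is the careful bookkeeping around the indices $i$ with $A_i=0$ (those outside ${\mathcal J}$). For such $i$, the coordinate $x_i^*=B_i$ does not depend on $x_1^*$, so its positivity is not literally captured by a constraint on $x_1^*$; one must verify that these coordinates are automatically positive whenever a genuine positive steady state exists, or equivalently absorb them into the constants ${\mathcal C}_j$ of \eqref{eq:mathcalc} and check that the factorization \eqref{eq:sg} remains valid and sign-definite. The cleanest route is to rely on the already-established positivity of the $Y_i$ on the relevant intervals (Lemma \ref{lm:ysign}) and to treat the $A_i=0$ factors as positive constants, so that no spurious sign changes are introduced; the remaining verification is routine algebraic substitution into \eqref{eq:g} and comparison with \eqref{eq:q}.
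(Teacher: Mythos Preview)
Your proposal is correct and follows essentially the same route as the paper's proof: both directions go through the factorization $g=q\cdot\prod_k Y_k^{\varphi_k}$ together with Lemma~\ref{lm:ysign}, and the positivity condition $x^*\in\mathbb{R}_{>0}^s$ is translated into $x_1^*\in\bigcap_i I_i$ via the linear relations $x_i^*=A_ix_1^*+B_i$. The paper's proof is terser and simply asserts $x_i^*>0$ ``by \eqref{eq:iinterval}'' in the $\Leftarrow$ direction without singling out the $A_i=0$ case; your concern about that case is legitimate bookkeeping, but the paper handles it no more carefully than you do, so there is no difference in approach.
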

\begin{proof}
``$\Rightarrow$:"
Suppose $G$ has a positive steady state  $x^*:=(x^*_1, \ldots, x^*_s)$ in ${\mathcal P}_{c^*}$.
By \eqref{eq:h1}, \eqref{eq:h} and \eqref{eq:g}, we have $x^*_1$ is a solution of $g(\kappa^*;c^*;x_1)=0$.
By \eqref{eq:a}, \eqref{eq:b}, \eqref{eq:h1},  and \eqref{eq:h}, for any $i$ $(1\leq i\leq s)$,
$x^*_i=A_ix^*_1+B_i>0.$
So $x^*_1$ is contained in the interval $\bigcap\limits_{i=1}^sI_i$.
By  \eqref{eq:sg} and Lemma \ref{lm:ysign}, we know
$x^*_1$ is a solution of $q(\kappa^*;x_1)=0$.

``$\Leftarrow$:" Let $x^*_1$ be a solution of $q(\kappa^*;x_1)=0$ in $\bigcap\limits_{i=1}^sI_i$. Then, by \eqref{eq:sg},
 $x^*_1$ is a solution of $g(\kappa^*;c^*;x_1)=0$. For every $i$ $(2\leq i\leq s)$, let $x^*_i=A_ix^*_1+B_i$. Then, by \eqref{eq:iinterval}, we have $x^*_i>0$ for every $i$. By \eqref{eq:h1}, \eqref{eq:h} and \eqref{eq:g},
$x^*$ is a positive solution of $h_1=\cdots=h_s=0$ for $\kappa=\kappa^*$  and for $c=c^*$, i.e.,
it is a positive steady state in the stoichiometric compatibility class ${\mathcal P}_{c^*}$
\end{proof}

\begin{lemma}\label{lm:jactildeg}
Given a network $G$, suppose the stoichiometric subspace of $G$ is one-dimensional.
Let $g(\kappa; c; x_1)$ be the polynomial defined in \eqref{eq:g}. For a total-constant vector $c^*\in {\mathbb R}^{s-1}$,
let $q(\kappa;x_1)$ be the polynomial defined in \eqref{eq:q}.
If for a  rate-constant vector $\kappa^*\in {\mathbb R}_{>0}^s$, $G$ has a positive steady state $x^*$ in the stoichiometric compatibility class ${\mathcal P}_{c^*}$, then $\pd{g}{x_1}(\kappa^*;c^*;x^*_1)$ has the same sign with
$\pd{q}{x_1}(\kappa^*;x^*_1)$,
and additionally, the steady state
 $x^*$ is stable  if and only if $\pd{q}{x_1}(\kappa^*;x^*_1)<0$.
\end{lemma}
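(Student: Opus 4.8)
The plan is to exploit the factorization \eqref{eq:sg}, $g(\kappa^*; c^*; x_1) = q(\kappa^*; x_1)\prod_{k=1}^r Y_k(x_1)^{\varphi_k}$, and differentiate it with respect to $x_1$. By the product rule,
\[
\pd{g}{x_1}(\kappa^*; c^*; x_1) = \pd{q}{x_1}(\kappa^*; x_1)\prod_{k=1}^r Y_k(x_1)^{\varphi_k} + q(\kappa^*; x_1)\,\frac{d}{dx_1}\Big(\prod_{k=1}^r Y_k(x_1)^{\varphi_k}\Big).
\]
Since $x^*$ is a positive steady state, the proof of Lemma \ref{lm:qroot} gives $q(\kappa^*; x^*_1) = 0$, so the second summand vanishes at $x_1 = x^*_1$, leaving
\[
\pd{g}{x_1}(\kappa^*; c^*; x^*_1) = \pd{q}{x_1}(\kappa^*; x^*_1)\prod_{k=1}^r Y_k(x^*_1)^{\varphi_k}.
\]

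The key step is then to show that the product factor is strictly positive. First I would note that, because $x^*$ is a positive steady state, each coordinate $x^*_i = A_i x^*_1 + B_i > 0$ forces its first coordinate into $\bigcap_{i=1}^s I_i$, and in particular $x^*_1 \in I_k$ for every representative $k \in \{1, \ldots, r\}$. Lemma \ref{lm:ysign} then yields $Y_k(x^*_1) > 0$ for each such $k$. Since every exponent $\varphi_k = \min_{1 \le j \le m}\{\sum_{i \in [k]}\alpha_{ij}\}$ is a nonnegative integer (the $\alpha_{ij}$ being nonnegative), each factor $Y_k(x^*_1)^{\varphi_k}$ is positive, hence so is their product. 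This establishes that $\pd{g}{x_1}(\kappa^*; c^*; x^*_1)$ has the same sign with $\pd{q}{x_1}(\kappa^*; x^*_1)$, which is the first assertion.

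For the stability claim, I would chain together the preceding lemmas. Combining Lemma \ref{lm:jac}, which gives $(\alpha_{11}-\beta_{11})^{s-1}\pd{g}{x_1}(\kappa^*;c^*;x^*_1) = |{\rm Jac}_h(x^*)|$, with Lemma \ref{lm:jh}, which gives $|{\rm Jac}_h(x^*)| = (\alpha_{11}-\beta_{11})^{s-1}\sum_{i=1}^s\pd{f_i}{x_i}(x^*)$, and cancelling the common factor $(\alpha_{11}-\beta_{11})^{s-1}$ (nonzero by Assumption \ref{assumption}), I would obtain the identity $\pd{g}{x_1}(\kappa^*;c^*;x^*_1) = \sum_{i=1}^s\pd{f_i}{x_i}(x^*)$. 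By the first part this common value has the same sign as $\pd{q}{x_1}(\kappa^*;x^*_1)$. If $\pd{q}{x_1}(\kappa^*;x^*_1) < 0$, then $\pd{g}{x_1}(\kappa^*;c^*;x^*_1) \neq 0$, so $x^*$ is nondegenerate by Lemma \ref{lm:nonde}, while $\sum_{i=1}^s\pd{f_i}{x_i}(x^*) < 0$ makes $x^*$ stable by Lemma \ref{lm:stable}. Conversely, if $x^*$ is stable, then nondegeneracy forces $\pd{g}{x_1}(\kappa^*;c^*;x^*_1) \neq 0$ and Lemma \ref{lm:stable} forces $\sum_{i=1}^s\pd{f_i}{x_i}(x^*) < 0$, whence $\pd{q}{x_1}(\kappa^*;x^*_1) < 0$; this yields the equivalence.

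I do not expect a serious obstacle, as the argument is largely bookkeeping once \eqref{eq:sg} is differentiated. The one point needing care is the positivity of $\prod_{k=1}^r Y_k(x^*_1)^{\varphi_k}$, resting on the two facts just isolated: that the first coordinate of a positive steady state lies in each $I_k$ (so Lemma \ref{lm:ysign} applies) and that the exponents $\varphi_k$ are nonnegative. A second detail to handle cleanly is the interplay, in the definition of stability, between nonvanishing of the derivative (nondegeneracy, via Lemma \ref{lm:nonde}) and the negative-trace criterion (Lemma \ref{lm:stable}), so that the single inequality ``$\pd{q}{x_1}(\kappa^*;x^*_1)<0$'' simultaneously encodes both.
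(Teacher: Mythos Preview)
Your proposal is correct and follows essentially the same approach as the paper: differentiate the factorization \eqref{eq:sg}, use $q(\kappa^*;x_1^*)=0$ to kill the cross term, invoke Lemma \ref{lm:ysign} for positivity of the $Y_k(x_1^*)$, and then chain Lemmas \ref{lm:jac}, \ref{lm:jh}, \ref{lm:nonde}, \ref{lm:stable} for the stability equivalence. Your write-up is in fact more explicit than the paper's on two points the paper leaves implicit (the product-rule computation and the nonnegativity of the exponents $\varphi_k$), and your careful separation of the nondegeneracy and trace conditions in the stability argument is a nice touch.
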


\begin{proof}
By \eqref{eq:sg}, we have
$\pd{g}{x_1}(\kappa^*;c^*;x_1^*)=\pd{q}{x_1}(\kappa^*; x_1^*)\prod\limits_{k=1}^rY_k(x_1^*)^{\varphi_k}$.
By the proof of Lemma \ref{lm:qroot}, $x_1^*\in \bigcap\limits_{i=1}^sI_i$, where $I_i$ is defined in \eqref{eq:iinterval}.
By Lemma \ref{lm:ysign}, we have $Y_k(x_1^*)>0$ for any $k$.
Therefore, ${\tt sign}(\pd{g}{x_1}(\kappa^*;c^*;x_1^*))={\tt sign}(\pd{q}{x_1}(\kappa^*; x_1^*))$. By Lemma \ref{lm:stable}, Lemma \ref{lm:jh} and Lemma \ref{lm:jac}, the steady state $x^*$ is stable if and only if $\pd{g}{x_1}(\kappa^*;c^*;x_1^*)<0$, and thus, we have the conclusion.
\end{proof}

 \begin{lemma}\label{lm:assumption2}
Given a network $G$,
   suppose the stoichiometric subspace of $G$ is one-dimensional.
   If for a total-constant vector $c^*\in {\mathbb R}^{s-1}$,
   the set of indices ${\mathcal H}$ defined \eqref{eq:nindexi} is empty,
   then for any rate-constant vector $\kappa\in {\mathbb R}^{s}$,
   $G$ has
     either no positive steady states, or infinitely many positive steady states in the stoichiometric compatibility
    class ${\mathcal P}_{c^*}$.
\end{lemma}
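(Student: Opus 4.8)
The plan is to exploit the explicit factored form of $q(\kappa;x_1)$ in \eqref{eq:q} together with the root criterion of Lemma \ref{lm:qroot}. The key observation is that the only place where the variable $x_1$ enters $q(\kappa;x_1)$ is through the product $\prod_{k\in{\mathcal H}}Y_k(x_1)^{\gamma_{kj}}$. Hence, when ${\mathcal H}=\emptyset$, this product is empty (equal to $1$) for every $j$, and \eqref{eq:q} collapses to
\[
q(\kappa;x_1)\;=\;(\beta_{11}-\alpha_{11})\sum_{j=1}^m\lambda_j\kappa_j{\mathcal C}_j,
\]
which is a constant independent of $x_1$. So first I would record this reduction.

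Next I would split into two cases according to whether this constant is zero. If $(\beta_{11}-\alpha_{11})\sum_{j=1}^m\lambda_j\kappa_j{\mathcal C}_j\neq 0$, then $q(\kappa;x_1)=0$ has no solution at all, so in particular none in $\bigcap_{i=1}^sI_i$; by the ``only if'' direction of Lemma \ref{lm:qroot}, $G$ has no positive steady state in ${\mathcal P}_{c^*}$. If instead the constant is zero, then $q(\kappa;x_1)\equiv 0$, so every $x_1\in\bigcap_{i=1}^sI_i$ is a solution of $q(\kappa;x_1)=0$ lying in $\bigcap_{i=1}^sI_i$, and the ``if'' direction of Lemma \ref{lm:qroot} produces a positive steady state from each such $x_1$. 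Since the first coordinate $x_1$ determines the whole steady state (the remaining coordinates are forced by $x_i=A_ix_1+B_i$), distinct values of $x_1$ give distinct positive steady states.

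Finally I would observe that $\bigcap_{i=1}^sI_i$ is a finite intersection of open intervals (see \eqref{eq:iinterval}) and hence is itself an open interval, possibly empty. If it is empty there are no positive steady states; if it is nonempty it contains infinitely many points, and by the previous paragraph these yield infinitely many distinct positive steady states. Combining the cases, $G$ has either no positive steady states or infinitely many, as claimed.

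There is essentially no hard step here; the argument is a direct consequence of the $x_1$-independence of $q$ when ${\mathcal H}=\emptyset$. The only point requiring a little care is the bookkeeping of the degenerate sub-cases, namely that an empty intersection $\bigcap_{i=1}^sI_i$ still falls under the ``no positive steady states'' alternative, so that no intermediate finite-but-positive count can occur.
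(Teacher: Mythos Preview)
Your proof is correct and follows essentially the same route as the paper: both use \eqref{eq:q} to see that $q(\kappa;x_1)$ becomes the constant $(\beta_{11}-\alpha_{11})\sum_{j}\lambda_j\kappa_j{\mathcal C}_j$ when ${\mathcal H}=\emptyset$, and then invoke Lemma~\ref{lm:qroot} together with the fact that $\bigcap_{i=1}^s I_i$ is an open interval. The only organizational difference is that the paper starts by assuming one positive steady state exists (forcing the constant to vanish and the interval to be nonempty) and then deduces infinitely many, whereas you do a direct case split on the value of the constant; the content is the same.
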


\begin{proof}
Suppose for a rate-constant vector $\kappa^*\in {\mathbb R}^{s}$,
$x^*$ is a positive steady state in the stoichiometric compatibility class ${\mathcal P}_{c^*}$.
By the proof of Lemma \ref{lm:qroot},
 the first coordinate of $x^*$, say $x_1^*$, is a solution of the equation $q(\kappa^*; x_1)=0$  in $\bigcap\limits_{i=1}^sI_i$, where $I_i$ is the open interval defined in \eqref{eq:iinterval}.
If ${\mathcal H}=\emptyset$, then by \eqref{eq:q},
we have
$$q(\kappa^*; x_1^*)\;=\; (\beta_{11}-\alpha_{11})\sum_{j=1}^m\lambda_j\kappa^*_j{\mathcal C}_j.$$
By Assumption \eqref{assumption}, $\beta_{11}-\alpha_{11}\neq 0$.
So, we must have  $\sum_{j=1}^m\lambda_j\kappa^*_j{\mathcal C}_j=0$.
Note also $\bigcap\limits_{i=1}^sI_i$ is nonempty since it contains $x^*_1$.
Therefore, any point in $\bigcap\limits_{i=1}^sI_i$ is a solution of $q(\kappa^*; x_1)=0$, and so, the conclusion follows from
Lemma \ref{lm:qroot}.
\end{proof}


\begin{remark}\label{rmk:assumption2}
In this remark, we explain why Assumption \ref{assumption2} is reasonable.
If for a  rate-constant vector $\kappa^*$, $G$ has $N$ positive steady states in  ${\mathcal P}_{c^*}$, where $0<N<+\infty$, then
Lemma \ref{lm:assumption2} guarantees that there exists one index in ${\mathcal H}$, say $k$.
If $1\not\in {\mathcal H}$, then we can relabel the species $X_1, \ldots, X_s$ as
$\tilde X_1, \ldots, \tilde X_s$ such that $\tilde X_1 = X_k$, $\tilde X_k = X_1$ and for any $i\not\in \{1, k\}$,
$X_i=\tilde X_i$. We illustrate this relabelling  by Example \ref{ex:assumption2}.
\end{remark}


\begin{example}\label{ex:assumption2}
 Consider the following network $G$
 $$ X_1+2X_2+X_3+2X_4\xrightarrow{\kappa_1} 2X_1+X_2+3X_4,\;\;\;
 X_1+2X_3+X_4\xrightarrow{\kappa_2} X_2+3X_3. $$
 The conservation-law equations are
 \begin{align}\label{eq:cl1}
 -x_1-x_2-c_1=0, \;\; -x_1-x_3-c_2=0,\;\; x_1-x_4-c_3=0.
 \end{align}
\textcolor{black}{
We choose a total-constant vector $c^*:=(c_1^*, c_2^*, c_3^*)=(-3, 1, 2)$ ($\in \mathbb{R}^3$). It is straightforward to check that the set of indices defined in \eqref{eq:nindexj1} is ${\mathcal J}=\{1,2, 3, 4\}$, and there are $4$ equivalence classes defined in \eqref{eq:iec}:
$$[1]=\{1\},\; [2]=\{2\}, \; [3]=\{3\}, \; [4]=\{4\}.$$
By \eqref{eq:varphi}, for each equivalence class, we have
\begin{align*}
 \begin{array}{cc}
 \varphi_1=\min\limits_{1\leq j\leq 2} \{\alpha_{1j}\}=1,\;&\varphi_2=\min\limits_{1\leq j\leq 2} \{\alpha_{2j}\}=0, \\ \varphi_3=\min\limits_{1\leq j\leq 2} \{\alpha_{3j}\}=1, \;&\varphi_4=\min\limits_{1\leq j\leq 2} \{\alpha_{4j}\}=1,
 \end{array}
 \end{align*}
 and so,  by \eqref{eq:gamma}, we have
 \begin{align*}
 \begin{array}{cccc}
 \gamma_{11}=0, &\gamma_{21}=2 ,&\gamma_{31}=0 ,&\gamma_{41}=1, \\
 \gamma_{12}=0, &\gamma_{22}=0 ,&\gamma_{32}=1 ,&\gamma_{42}=0. \\
 \end{array}
 \end{align*}
 }Hence, the set of indices defined in \eqref{eq:nindexi} is ${\mathcal H}=\{2, 3, 4\}$, and $1\not\in {\mathcal H}$.
However, we can relabel $X_1$ and $X_4$ as what is suggested in Remark \ref{rmk:assumption2}. Consider
the network after relabelling:
 $$ \tilde X_4+2\tilde X_2+ \tilde X_3+2 \tilde X_1\xrightarrow{\kappa_1} 2\tilde X_4+\tilde X_2+3 \tilde X_1,\;\;\;
 \tilde X_4+2 \tilde X_3+\tilde X_1\xrightarrow{\kappa_2}\tilde X_2+3 \tilde X_3. $$
 For this network,
 the conservation-law equations are
 \begin{align}\label{eq:cl2}
 -\tilde x_1-\tilde x_2-\tilde c_1=0, \;\; -\tilde x_1-\tilde x_3-\tilde c_2=0,\;\; \tilde x_1-\tilde x_4-\tilde c_3=0.
 \end{align}
 Comparing \eqref{eq:cl1} and \eqref{eq:cl2}, and noting that
 $x_1=\tilde x_4$, $x_4=\tilde x_1$, $x_2=\tilde x_2$, and $x_3=\tilde x_3$,
 we have $\tilde c_1=c_1+c_3$, $\tilde c_2=c_2+c_3$, and  $\tilde c_3=-c_3$.
 So, for $c^*=(-3, 1, 2)$, the total-constant vector  becomes
 $\tilde c^*=(-1, 3, -2)$ after the relabelling.
 It is straightforward to check that the set of indices defined in \eqref{eq:nindexi} is $\tilde {\mathcal H}=\{1, 2,3\}$ \textcolor{black}{for $\tilde c^*$}. Now, we do have $1\in \tilde {\mathcal H}$.
\end{example}


\begin{lemma}\label{rmk:mathcalc}
Given a network $G$,
   suppose the stoichiometric subspace of $G$ is one-dimensional.
   For a total-constant vector $c^*$,  let ${\mathcal C}_j$ be the number defined in \eqref{eq:mathcalc}.
If for a  rate-constant vector $\kappa^*$, $G$ has at least one positive steady state $x^*=(x^*_1, \ldots, x^*_s)$ in the stoichiometric compatibility class ${\mathcal P}_{c^*}$, then for any $j\in \{1, \ldots, m\}$, the number ${\mathcal C}_j$  is positive.
\end{lemma}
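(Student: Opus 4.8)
The plan is to split ${\mathcal C}_j$ into its two defining products (recall \eqref{eq:mathcalc}) and to argue that each factor is strictly positive, uniformly in $j$. The first product runs over $i\in {\mathcal J}$, and by the definition \eqref{eq:nindexj1} these are precisely the indices with $A_i\neq 0$; hence $|A_i|>0$, and $|A_i|^{\alpha_{ij}}>0$ for every non-negative integer exponent $\alpha_{ij}$. Thus $\prod_{i\in {\mathcal J}}|A_i|^{\alpha_{ij}}>0$ with no further hypothesis, so this factor is never the source of a sign problem and does not even require the existence of a positive steady state.

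The real content is the second product $\prod_{i\in \{1,\ldots,s\}\backslash {\mathcal J}} B_i^{\alpha_{ij}}$, where the exponents $\alpha_{ij}$ may be odd, so I must show $B_i>0$ for each $i\notin {\mathcal J}$. This is where the hypothesis that $G$ has a positive steady state $x^*$ enters. First I would note that since $A_1=1\neq 0$ we always have $1\in{\mathcal J}$, so every index $i\notin{\mathcal J}$ lies in $\{2,\ldots,s\}$, where $B_i$ and the conservation-law equation $h_i$ are defined. Then, exactly as in the proof of Lemma \ref{lm:qroot}, a positive steady state in ${\mathcal P}_{c^*}$ satisfies $h_i(x^*)=0$ for $2\le i\le s$ (see \eqref{eq:h}), which together with the definitions \eqref{eq:a}--\eqref{eq:b} of $A_i$ and $B_i$ yields
\[ x^*_i \;=\; A_i x^*_1 + B_i, \qquad 1\le i\le s. \]
For $i\notin {\mathcal J}$ we have $A_i=0$, so this relation collapses to $x^*_i=B_i$. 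Since $x^*$ is a \emph{positive} steady state, $x^*_i>0$, and therefore $B_i>0$.

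With $B_i>0$ established for every $i\notin {\mathcal J}$, each factor $B_i^{\alpha_{ij}}$ is positive, so the second product is positive as well; multiplying the two positive products gives ${\mathcal C}_j>0$ for every $j\in\{1,\ldots,m\}$, as claimed. I do not anticipate a genuine obstacle: the argument is entirely a sign bookkeeping. The only subtlety worth flagging is that positivity of ${\mathcal C}_j$ is \emph{not} automatic, because an odd power of a possibly-negative $B_i$ could in principle spoil the sign, and it is exactly the existence of a positive steady state that forces $B_i=x^*_i>0$ on the relevant indices. The conclusion is uniform in $j$ because neither $A_i$ nor $B_i$ depends on $j$.
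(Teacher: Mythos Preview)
Your proof is correct and follows essentially the same approach as the paper: both arguments reduce to showing that $x^*_i=A_ix^*_1+B_i$ via the conservation laws, so that $A_i=0$ for $i\notin{\mathcal J}$ forces $B_i=x^*_i>0$. You are simply more explicit than the paper about why the first product $\prod_{i\in{\mathcal J}}|A_i|^{\alpha_{ij}}$ is automatically positive and why every $i\notin{\mathcal J}$ lies in $\{2,\ldots,s\}$.
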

\begin{proof}
By \eqref{eq:a}, \eqref{eq:b}, \eqref{eq:h1}, and \eqref{eq:h}, for any $i$ $(1\leq i\leq s)$,
$x^*_i=A_ix^*_1+B_i.$
 If $i\in \{1, \ldots, s\}\backslash{\mathcal J}$, then we have $A_i=0$ by \eqref{eq:nindexj1}, and so, we have
$B_i=A_ix^*_1+B_i=x^*_i>0$.
\end{proof}
\begin{lemma}\label{lm:well-defined}
Given a network $G$,
   suppose the stoichiometric subspace of $G$ is one-dimensional.
   For a total-constant vector $c^*$, let $I$ be the open interval defined in \eqref{eq:tinterval}, and let $\phi(\hat \kappa; x_1)$ be the function defined in \eqref{eq:phi}.
   If for a  rate-constant vector $\kappa^*$, $G$ has a positive steady state $x^*$ in the stoichiometric compatibility class ${\mathcal P}_{c^*}$, then we have the following results.
   \begin{itemize}
\item[(1)] For any $z\in I$,
$\phi(\hat \kappa; x_1)$ is well-defined at $z$.
\item[(2)] For any $k\in {\mathcal H}\backslash \{\tau\}$,
we have $Y_k(-\frac{B_{\tau}}{A_{\tau}})>0$, where ${\mathcal H}$, $\tau$ and $Y_k$ are defined in \eqref{eq:nindexi}, \eqref{eq:setk} and \eqref{eq:y}, and recall by \eqref{eq:setk} that $-\frac{B_{\tau}}{A_{\tau}}$ is the left endpoint of $I$.
\item[(3)] The function $\phi(\hat \kappa; x_1)$ is well-defined at the
left endpoint of $I$.
\end{itemize}
\end{lemma}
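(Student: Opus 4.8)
The plan is to reduce all three parts to sign computations involving the functions $Y_k$ and the constants ${\mathcal C}_j$, exploiting two facts already available: Lemma \ref{lm:ysign} (so that $Y_k(z)>0$ whenever $z\in I_k$) and Lemma \ref{rmk:mathcalc} (so that every ${\mathcal C}_j>0$ once a positive steady state exists). Since $\phi(\hat\kappa;x_1)$ is well-defined at a point exactly when $P_\ell$ is nonzero there, and $P_\ell(x_1)={\mathcal C}_\ell\prod_{k\in{\mathcal H}}Y_k(x_1)^{\gamma_{k\ell}}$, it suffices in each case to show every factor of $P_\ell$ is positive. Throughout I would use that ${\mathcal H}\subseteq{\mathcal J}$ by \eqref{eq:nindexi}, so $A_k\neq0$ for every $k\in{\mathcal H}$.

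For part (1), I would observe that for any $z\in I=\bigcap_{k\in{\mathcal H}}I_k$ we have $z\in I_k$ for every $k\in{\mathcal H}$, so Lemma \ref{lm:ysign} gives $Y_k(z)>0$; together with ${\mathcal C}_\ell>0$ this yields $P_\ell(z)>0$, hence $\phi$ is well-defined at $z$.

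The heart of the argument is part (2), where I must evaluate $Y_k$ at the left endpoint $-\frac{B_\tau}{A_\tau}$ of $I$, a point on the boundary of $I$ rather than inside it, so Lemma \ref{lm:ysign} no longer applies directly. I would split into the cases $A_k>0$ and $A_k<0$. When $A_k>0$ the left endpoint of $I_k$ is $-\frac{B_k}{A_k}$, and since $-\frac{B_\tau}{A_\tau}$ is the maximum of these left endpoints I get $-\frac{B_k}{A_k}\le-\frac{B_\tau}{A_\tau}$; the strictness I need comes from the fact that $1,\dots,r$ represent distinct equivalence classes, so $k\neq\tau$ forces $\frac{B_k}{A_k}\neq\frac{B_\tau}{A_\tau}$, whence $-\frac{B_k}{A_k}<-\frac{B_\tau}{A_\tau}$ and $A_k(-\frac{B_\tau}{A_\tau})+B_k>0$. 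When $A_k<0$ the interval $I_k=(0,-\frac{B_k}{A_k})$ bounds the right endpoint of $I$ from above, so the right endpoint of $I$ is at most $-\frac{B_k}{A_k}$; since $I$ is nonempty (it contains $x_1^*$, because $x_1^*\in\bigcap_{i=1}^sI_i\subseteq I$ by the proof of Lemma \ref{lm:qroot}) its left endpoint $-\frac{B_\tau}{A_\tau}$ lies strictly below its right endpoint, giving $-\frac{B_\tau}{A_\tau}<-\frac{B_k}{A_k}$ and again $Y_k(-\frac{B_\tau}{A_\tau})>0$. I expect the bookkeeping of these two cases, together with the appeal to distinct equivalence classes to upgrade $\le$ to $<$, to be the main obstacle.

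For part (3), I would combine part (2) with the key observation that $\gamma_{\tau\ell}=0$: by \eqref{eq:varphi}--\eqref{eq:gamma} one has $\min_j\gamma_{\tau j}=0$, and by \eqref{eq:ell} the index $\ell$ attains this minimum, so $\gamma_{\tau\ell}=0$. Thus in $P_\ell(-\frac{B_\tau}{A_\tau})={\mathcal C}_\ell\prod_{k\in{\mathcal H}}Y_k(-\frac{B_\tau}{A_\tau})^{\gamma_{k\ell}}$ the only potentially vanishing factor, namely $Y_\tau(-\frac{B_\tau}{A_\tau})=0$, is raised to the power $0$ and contributes $1$, while every remaining factor with $k\neq\tau$ is positive by part (2) raised to the nonnegative exponent $\gamma_{k\ell}\ge0$, and ${\mathcal C}_\ell>0$. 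Hence $P_\ell(-\frac{B_\tau}{A_\tau})>0$, so $\phi$ is well-defined at the left endpoint of $I$.
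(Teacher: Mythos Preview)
Your proposal is correct and follows essentially the same route as the paper's proof: part (1) via $P_\ell(z)={\mathcal C}_\ell\prod_{k\in{\mathcal H}}Y_k(z)^{\gamma_{k\ell}}>0$ using Lemma~\ref{lm:ysign} and Lemma~\ref{rmk:mathcalc}; part (2) by the same two-case split on the sign of $A_k$, using that $I$ is nonempty; and part (3) by observing $\gamma_{\tau\ell}=0$ so that the vanishing factor $Y_\tau$ drops out of $P_\ell$. Your justification of the strict inequality $-\frac{B_k}{A_k}<-\frac{B_\tau}{A_\tau}$ in the $A_k>0$ case (via the fact that $k,\tau\in{\mathcal H}\subseteq\{1,\dots,r\}$ are distinct representatives, hence $\frac{B_k}{A_k}\neq\frac{B_\tau}{A_\tau}$) is in fact more careful than the paper, which asserts this strictness without comment.
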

\begin{proof}

(1)\; By \eqref{eq:pj}, we have $P_{\ell}(x_1)={\mathcal C}_{\ell}
\prod\limits_{k\in {\mathcal H}}Y_k(x_1)^{\gamma_{k\ell}}$.
Recall  that $I=\bigcap\limits_{k\in {\mathcal H}}I_k$ \eqref{eq:tinterval}, where
$I_k$ is defined in \eqref{eq:iinterval}.
So, by   Lemma \ref{lm:ysign} and Lemma \ref{rmk:mathcalc},
for any $z\in I$, we have $P_{\ell}(z)>0$, and thus, $\phi(\hat \kappa; x_1)$ is well-defined at $z$.

(2)\;
Since $G$ has at least one positive steady state $x^*$ in the stoichiometric compatibility class ${\mathcal P}_{c^*}$,
by Lemma \ref{lm:qroot}, the interval $\bigcap\limits_{i=1}^sI_i$ is nonempty.
Note that by \eqref{eq:tinterval}, we have  $\bigcap\limits_{i=1}^sI_i\subset I$.
 Thus, $I$ is also nonempty.
 For any $k\in {\mathcal H}\backslash \{\tau\}$,  recall
 $Y_k(x_1)=\frac{1}{|A_k|}(A_kx_1+B_k)$ \eqref{eq:y}.
 We show
below that $Y_k(-\frac{B_{\tau}}{A_{\tau}})=\frac{1}{|A_k|}(B_k-A_k\frac{B_\tau}{A_\tau})>0$. In fact, we have the following cases.
\begin{itemize}
\item If $A_k>0$, then by \eqref{eq:iinterval}, $I_k=(-\frac{B_k}{A_k}, +\infty)$. Note that
$a=-\frac{B_\tau}{A_\tau}$ is the left endpoint of $I$ and $I\subset I_k$ by \eqref{eq:tinterval}. So,
 we have  $-\frac{B_\tau}{A_\tau}>-\frac{B_k}{A_k}$, and hence,
$$B_k-A_k\frac{B_\tau}{A_\tau}\;>\;B_k-A_k\frac{B_k}{A_k}\;=\;0.$$
\item If $A_k<0$, then by \eqref{eq:iinterval}, $I_k=(0, -\frac{B_k}{A_k})$. So, we have  $-\frac{B_\tau}{A_\tau}<-\frac{B_k}{A_k}$, and hence,
$$B_k-A_k\frac{B_\tau}{A_\tau}\;>\;B_k-A_k\frac{B_k}{A_k}\;=\;0.$$
\end{itemize}

(3)\;
 By \eqref{eq:varphi}, \eqref{eq:gamma} and \eqref{eq:ell}, we have
$\gamma_{\tau \ell}=0$. So, by \eqref{eq:pj},
we have $$P_{\ell}(x_1)\;=\;{\mathcal C}_{\ell}
\prod\limits_{k\in {\mathcal H}\backslash \{\tau\}}Y_k(x_1)^{\gamma_{k\ell}}.$$
So, by Lemma \ref{rmk:mathcalc} and (2), we have $P_{\ell}(a)>0$. Thus, $\phi(\hat \kappa; x_1)$ is well-defined at $a$.
 \end{proof}

\section{Nondegeneracy Conjecture}\label{sec:nc}
In this section, the goal is to prove Conjecture \ref{conj:nc} (Nondegeneracy Conjecture) for the networks with one-dimensional stoichiometric subspaces, see Theorem \ref{thm:nc}.
\begin{theorem}\label{thm:nc}
Given a network $G$,
if the stoichiometric subspace of $G$ is one-dimensional, and if $cap_{pos}(G)<+\infty$, then $cap_{nondeg}(G)=cap_{pos}(G)$.
\end{theorem}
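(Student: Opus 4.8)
The plan is to establish the two inequalities separately. The inequality $cap_{nondeg}(G)\le cap_{pos}(G)$ is immediate, since a nondegenerate positive steady state is in particular a positive steady state, so any parameters realizing $N$ nondegenerate positive steady states realize $N$ positive steady states. For the reverse inequality I would show that \emph{every} witness for $cap_{pos}(G)$ is automatically a witness for $cap_{nondeg}(G)$. Write $N:=cap_{pos}(G)$ and, assuming $N\ge 1$ (the case $N=0$ being trivial), fix $(\kappa^*,c^*)$ realizing $N$ distinct positive steady states in $\mathcal{P}_{c^*}$; after the relabelling justified by Assumption \ref{assumption2} we may take $1\in\mathcal{H}$. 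Because a steady state is recovered from its first coordinate through $x_i=A_ix_1+B_i$, distinct steady states have distinct first coordinates, and by Lemma \ref{lm:qroot} these are precisely the $N$ distinct roots $z_1<\cdots<z_N$ of $q(\kappa^*;x_1)$ lying in $\bigcap_{i=1}^sI_i$. By Lemma \ref{lm:jactildeg} together with Lemma \ref{lm:nonde}, the steady state attached to $z_k$ is nondegenerate exactly when $z_k$ is a simple root of $q(\kappa^*;x_1)$. Hence it suffices to prove that all $N$ roots $z_k$ are simple.

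I would argue this by contradiction. Suppose some $z_k$ has multiplicity $m\ge 2$. The coefficients of $q$ depend linearly on $\kappa$ by \eqref{eq:pq}, so perturbing $\kappa^*$ (with $c^*$, the interval, and the forms $Y_k$ all held fixed) moves $q$ within the linear family spanned by the $\lambda_jP_j$. The content of the perturbation lemmas (Lemma \ref{lm:even} for even $m$, Lemma \ref{lm:odd} for odd $m\ge 3$) is that one can choose such a perturbation $\kappa'$, arbitrarily close to $\kappa^*$ and still positive, that preserves every other root $z_j$ ($j\ne k$) together with its multiplicity while splitting $z_k$ into at least two distinct simple real roots inside $\bigcap_{i=1}^sI_i$: an even-multiplicity root splits into two nearby real roots, whereas an odd-multiplicity root (necessarily $m\ge 3$) splits into three, the parity of the number of real roots being dictated by $m$. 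The natural mechanism for preserving the remaining roots is to perturb $q$ by a multiple of $\prod_{j\ne k}(x_1-z_j)^{m_j}$ whose effect is concentrated near $z_k$. Either way $q(\kappa';x_1)$ acquires at least $N+1$ distinct roots in $\bigcap_{i=1}^sI_i$, so by Lemma \ref{lm:qroot} the network has at least $N+1$ positive steady states in $\mathcal{P}_{c^*}$, contradicting $cap_{pos}(G)=N$. Therefore every $z_k$ is simple, the witness $(\kappa^*,c^*)$ realizes $N$ nondegenerate positive steady states, and $cap_{nondeg}(G)\ge N$, which combined with the first inequality gives equality.

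The crux, and the reason the two perturbation lemmas are needed, is realizability: the desired local modification of $q$ near $z_k$ must be achievable by a genuine perturbation of $\kappa$ inside the positive orthant, even though $q$ is confined to the rigid family $(\beta_{11}-\alpha_{11})\sum_j\lambda_j\kappa_jP_j(x_1)$ with the exponents $\gamma_{kj}$ and the constants $\mathcal{C}_j$ fixed by the network. One must verify both that this family contains a perturbation direction producing the required sign pattern at and near $z_k$, and that the split roots, as well as the preserved roots $z_j$, all remain inside $\bigcap_{i=1}^sI_i$ so that they still correspond to positive steady states via Lemma \ref{lm:qroot}. The even/odd dichotomy is precisely the parity constraint governing how a real root of a given multiplicity can break into real roots; although it forces different minimal split counts in the two cases, in both the number of distinct positive steady states strictly increases, which is all that is required for the contradiction here. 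The same parity phenomenon, used quantitatively rather than merely to force a contradiction, is what later separates cases (a) and (b) of Theorem \ref{thm:bdm}.
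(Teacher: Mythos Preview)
Your strategy—perturb a witness with a multiple root to obtain strictly more than $N$ positive steady states—is close in spirit to the paper's, but two misreadings of Lemmas~\ref{lm:even} and~\ref{lm:odd} leave a genuine gap. First, these lemmas perturb only the single coordinate $\kappa_\ell$, which alters $q$ globally; they describe what happens \emph{near one root} and say nothing about preserving the remaining $z_j$. The polynomial $\prod_{j\ne k}(x_1-z_j)^{m_j}$ you propose to add is generally not in the span of the $P_j$'s, so the ``localized'' perturbation need not be realizable by any $\kappa$. Second, Lemma~\ref{lm:odd} does \emph{not} split an odd-multiplicity root into three: under the one-parameter perturbation of $\kappa_\ell$, a multiplicity-$(2K{+}1)$ root moves to a \emph{single} simple root. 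So even granting that the other (simple) roots persist, an odd-multiplicity $z_k$ would contribute $1+(N-1)=N$ simple roots—no contradiction.

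The paper circumvents both issues by applying the two lemmas \emph{simultaneously} to all $N$ roots under the same perturbation of $\kappa_\ell$: each odd-or-simple root contributes one simple root in either direction, while each even-multiplicity root contributes two in one direction and zero in the other. A pigeonhole on the $v$ even-multiplicity roots then yields a direction in which at least $\lceil v/2\rceil$ of them split, producing at least $2\lceil v/2\rceil+(N-v)\ge N$ nondegenerate positive steady states for a nearby $\kappa$. This gives $cap_{nondeg}(G)\ge N$ directly, without the stronger (and here unproved) claim that the original witness $(\kappa^*,c^*)$ is already nondegenerate.
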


Below, we first present a list of lemmas (with examples). The lemmas for proving Theorem \ref{thm:nc} are also very useful in Section \ref{sec:mstab} (e.g., see the proof of Lemma \ref{lm:nobd}). After the proof, we provide Corollary \ref{cry:ww'}, which will play a key role in the next section when we prove the part (b) of Theorem \ref{thm:bdm} (the main result).
\begin{lemma}\label{lm:phi}
Given a network $G$,
   suppose the stoichiometric subspace of $G$ is one-dimensional.
For a
total-constant vector $c^*$, 
let $q(\kappa; x_1)$ be the polynomial defined in \eqref{eq:q}, let  $\ell$ be the index defined in \eqref{eq:ell}
and
let $\phi(\hat \kappa; x_1)$ be the rational function defined in \eqref{eq:phi}.
If for a rate-constant vector $\kappa^*$,
 $x_1^*$  is a  multiplicity-$N$ ($N\geq 2$) solution of
 $q(\kappa^*;  x_1)=0$, and if $\phi(\hat \kappa; x_1)$ is well-defined at $x_1^*$, then
 we have
 $\phi(\hat \kappa^*; x_1^*)=\kappa_{\ell}^*$, $\frac{\partial \phi}{\partial x_1}(\hat \kappa^*; x_1^*)=0$, $\ldots, \frac{\partial^{N-1} \phi}{\partial x_1^{N-1}}(\hat \kappa^*; x_1^*)=0$  and $\frac{\partial^N \phi}{\partial x_1^N}(\hat \kappa^*; x_1^*)\neq 0$, where $\hat \kappa^* = (\kappa^*_1, \ldots, \kappa^*_{\ell-1}, \kappa^*_{\ell+1}, \ldots, \kappa^*_{m})$.
 \end{lemma}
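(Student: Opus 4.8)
The plan is to recast $q$ as a product in which a single factor $\kappa_\ell-\phi$ carries all the vanishing at $x_1^*$, and then to compare orders of vanishing through the Leibniz rule. First I would establish an explicit factorization identity. Starting from \eqref{eq:pq}, I separate the $\ell$-th summand and substitute the definition \eqref{eq:phi} of $\phi$; since $\lambda_\ell P_\ell(x_1)\phi(\hat\kappa;x_1)=-\sum_{j\neq\ell}\lambda_j\kappa_jP_j(x_1)$, this gives
\begin{align*}
q(\kappa;x_1)=(\beta_{11}-\alpha_{11})\,\lambda_\ell\, P_\ell(x_1)\,\bigl(\kappa_\ell-\phi(\hat\kappa;x_1)\bigr).
\end{align*}
Specializing to $\kappa=\kappa^*$ and writing $C:=(\beta_{11}-\alpha_{11})\lambda_\ell$ (a nonzero constant, by Assumption \ref{assumption} and $\lambda_\ell\neq 0$ from \eqref{eq:scalar}) together with $\psi(x_1):=\kappa_\ell^*-\phi(\hat\kappa^*;x_1)$, I obtain $q(\kappa^*;x_1)=C\,P_\ell(x_1)\,\psi(x_1)$.

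Next I would read off the first claim. Because $\phi$ is well-defined at $x_1^*$, we have $P_\ell(x_1^*)\neq 0$, so $\psi$ is smooth on a neighborhood of $x_1^*$. Evaluating the factorization at $x_1^*$ and using $q(\kappa^*;x_1^*)=0$ (as $x_1^*$ is a multiplicity-$N$ root with $N\geq 2$), the nonvanishing of $C\,P_\ell(x_1^*)$ forces $\psi(x_1^*)=0$, i.e. $\phi(\hat\kappa^*;x_1^*)=\kappa_\ell^*$.

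For the derivative conditions, I would match the order of vanishing of $\psi$ with that of $q(\kappa^*;\cdot)$ at $x_1^*$. Since $P_\ell(x_1^*)\neq 0$, both $C\,P_\ell$ and its reciprocal are smooth and nonzero near $x_1^*$, and multiplying by such a factor preserves the order of vanishing. Concretely, if $\psi$ vanishes to order $M$ at $x_1^*$, then in $\frac{d^k}{dx_1^k}\bigl(C P_\ell\psi\bigr)=C\sum_{i=0}^k\binom{k}{i}\frac{d^iP_\ell}{dx_1^i}\frac{d^{k-i}\psi}{dx_1^{k-i}}$ every summand with $k-i<M$ vanishes at $x_1^*$; hence the derivatives of $q(\kappa^*;\cdot)$ of orders $0,\dots,M-1$ vanish, while the order-$M$ derivative collapses to $C\,P_\ell(x_1^*)\,\frac{d^M\psi}{dx_1^M}(x_1^*)\neq 0$. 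Thus $q(\kappa^*;\cdot)$ vanishes to order exactly $M$; since by hypothesis it vanishes to order exactly $N$, we get $M=N$. Finally, as $\kappa_\ell^*$ is constant in $x_1$, $\frac{\partial^k\phi}{\partial x_1^k}(\hat\kappa^*;x_1^*)=-\frac{d^k\psi}{dx_1^k}(x_1^*)$ for all $k\geq 1$, which yields $\frac{\partial^k\phi}{\partial x_1^k}(\hat\kappa^*;x_1^*)=0$ for $1\leq k\leq N-1$ and $\frac{\partial^N\phi}{\partial x_1^N}(\hat\kappa^*;x_1^*)\neq 0$.

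I expect the only genuine content to be the factorization identity; once it is in hand, the matching of orders of vanishing is routine Leibniz-rule bookkeeping. The single point needing care is the smoothness of $\phi$ near $x_1^*$ (equivalently $P_\ell(x_1^*)\neq 0$), which is precisely the well-definedness hypothesis and is exactly what guarantees that multiplication by $C\,P_\ell$ neither creates nor destroys vanishing at $x_1^*$.
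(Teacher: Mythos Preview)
Your proof is correct and takes a genuinely different route from the paper's. The paper introduces the auxiliary function $\mu(\kappa_\ell,x_1):=q(\kappa_1^*,\ldots,\kappa_{\ell-1}^*,\kappa_\ell,\kappa_{\ell+1}^*,\ldots,\kappa_m^*,x_1)$, observes the implicit identity $\mu(\phi(\hat\kappa^*;x_1),x_1)\equiv 0$, and then differentiates this repeatedly via the chain rule, using at each step that $\partial\mu/\partial\kappa_\ell=(\beta_{11}-\alpha_{11})\lambda_\ell P_\ell(x_1^*)\neq 0$ to peel off the derivatives of $\phi$ one by one. Your approach instead rewrites $q$ explicitly as $q(\kappa^*;x_1)=C\,P_\ell(x_1)\bigl(\kappa_\ell^*-\phi(\hat\kappa^*;x_1)\bigr)$ and then invokes the standard fact that multiplication by a smooth nonvanishing factor preserves the exact order of vanishing. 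The factorization you write down is precisely what underlies the paper's identity $\mu(\phi,\,\cdot\,)\equiv 0$, but making it explicit lets you replace an inductive chain-rule computation with a single Leibniz-rule observation; this is cleaner and makes the logical structure (order of $q$ at $x_1^*$ equals order of $\kappa_\ell^*-\phi$ at $x_1^*$) transparent. The paper's approach, on the other hand, never needs to isolate the factorization and works more in the spirit of implicit differentiation, which some readers may find more familiar.
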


 \begin{proof}

Define $\mu(\kappa_{\ell}, x_1) := q(\kappa^*_1, \ldots, \kappa^*_{\ell-1},\kappa_{\ell},\kappa^*_{\ell+1},\ldots,\kappa^*_m, x_1)$.
If for  $\kappa=\kappa^*$, $x_1^*$ is a  multiplicity-$N$ solution of
 $q(\kappa^*; x_1)=0$, then
	\begin{enumerate}[(I)]
	\item $\mu(\kappa_{\ell}^*, x_1^*)=0$,
	\item  $\pd{\mu}{x_1}(\kappa_{\ell}^*, x_1^*)=0$, \ldots, $\pd{^{N-1}\mu}{x_1^{N-1}}(\kappa_{\ell}^*, x_1^*) = 0$, and
	\item  $\pd{^N\mu}{x_1^N}(\kappa_{\ell}^*, x_1^*) \neq 0$.
	\end{enumerate}
 For any $x_1\in {\mathbb R}$ such that $\phi(\hat \kappa; x_1)$ is well-defined, by \eqref{eq:pq} and \eqref{eq:phi}, we have
 $\phi(\hat \kappa^*; x_1^*)=\kappa_{\ell}^*$ and
\begin{align} \label{eq:mu=0}
\mu(\phi(\hat \kappa^*; x_1),  x_1)~=~0. 
\end{align}
Take the derivative of equation~\eqref{eq:mu=0}, via the chain rule:
\begin{align} \label{eq:deriv-1}
\pd{\mu}{\kappa_{\ell}} \left( \phi(\hat \kappa^*; x_1),x_1 \right) \pd{\phi}{x_1}(\hat\kappa^*;x_1) + \pd{\mu}{x_1}(\phi(\hat \kappa^*;x_1),x_1) ~=~0~.
\end{align}
\textcolor{black}{
Evaluating this equation at $x_1=x_1^*$, and recalling that $\phi(\hat \kappa^*;x_1^*)=\kappa_l^*$, we obtain:
\begin{align}\label{eq:deriv-2}
\pd{\mu}{\kappa_l}(\kappa_l^*,x_1^*)\pd{\phi}{x_1}(\hat \kappa^*;x_1^*)+ \pd{\mu}{x_1}(\kappa_l^*,x_1^*) ~=~0~.
\end{align}
Note that by \eqref{eq:pq},
$$\pd{\mu}{\kappa_l}(\kappa_l^*,x_1^*)=\pd{q}{\kappa_l}(\kappa^*;x_1^*)=(\beta_{11}-\alpha_{11})\lambda_{\ell}P_{\ell}(x^*_1)\neq 0.$$
(By Assumption \ref{assumption}, $\beta_{11}-\alpha_{11}\neq 0$. Recall $\lambda_{\ell}\neq 0$ by \eqref{eq:scalar}. And
$P_{\ell}(x^*_1)\neq 0$ because $\phi(\hat \kappa; x_1)$ is well-defined at $x^*_1$.)
 Note also $\pd{\mu}{x_1}(\kappa_l^*,x_1^*)=0$, by (II).  Thus, the equality \eqref{eq:deriv-2} implies $\pd{\phi}{x_1}(\hat \kappa^*;x_1^*)=0$.
Next,  take another derivative, applying the chain rule to equation~\eqref{eq:deriv-1}, and then evaluate at $x_1=x_1^*$:
{\tiny
\begin{align*}
\pd{\mu}{\kappa_l}(\kappa_l^*,x_1^*) \pd{^2\phi}{x_1^2}(\hat \kappa^*;x_1^*) +\pd{^2 \mu}{\kappa_l^2} (\kappa_l^*,x_1^*) \left(  \pd{\phi}{x_1}(\hat \kappa^*;x_1^*)  \right)^2
+ 2\pd{^2 \mu}{x_1 \partial \kappa_l} (\kappa_l^*,x_1^*) \pd{\phi}{x_1}(\hat \kappa^*;x_1^*) + \pd{^2 \mu}{x_1^2} (\kappa_l^*,x_1^*)=0.
\end{align*}
}Thus, we deduce from $\pd{\phi}{x_1}(\hat \kappa^*;x_1^*)=0$ that  $\pd{\mu}{\kappa_l}(\kappa_l^*,x_1^*) \pd{^2\phi}{x_1^2}(\hat \kappa^*;x_1^*)+ \pd{^2 \mu}{x_1^2} (\kappa_l^*,x_1^*)= 0$.
Similarly, for any $3\leq i\leq N$, we can deduce
 $\pd{\mu}{\kappa_l}(\kappa_l^*,x_1^*) \pd{^i\phi}{x_1^i}(\hat \kappa^*;x_1^*)+ \pd{^i \mu}{x_1^i} (\kappa_l^*,x_1^*)= 0$.
 By (II)  and (III), we have the conclusion.
 }
\end{proof}

\begin{lemma} \label{lm:even}
Given a network $G$,
   suppose the stoichiometric subspace of $G$ is one-dimensional.
\textcolor{black}{
For a total-constant vector $c^*$, let $q(\kappa;x_1)$ be the polynomial defined in \eqref{eq:q}, let  $\ell$ be the index defined in \eqref{eq:ell} and let $\phi(\hat \kappa; x_1)$ be the rational function defined in \eqref{eq:phi}.
If for a rate-constant vector $\kappa^*$,
 $x_1^*$  is a multiplicity-$2N$ ($N\in {\mathbb Z}_{>0}$) solution of
 $q(\kappa^*;  x_1)=0$, and if $\phi(\hat \kappa; x_1)$ is well-defined at $x_1^*$,
 then for every $\epsilon>0$,  there exists $\delta \in {\mathbb R}$ such that for all $\delta' \in (0,|\delta|)$,
 \begin{itemize}
 \item[(1)] for
 $\kappa= (\kappa_1^*,\ldots, \kappa^*_{\ell-1}, \kappa_{\ell}^*+{\rm sign}(\delta)\delta', \kappa_{\ell+1}^*, \ldots, \kappa_m^*)$, the equation $q(\kappa;x_1)=0$ has two  simple real
solutions $x_1^{(1)}$ and $x_1^{(2)}$, for which $0<x_1^{(1)}-x_1^*<\epsilon$ and $-\epsilon<x_1^{(2)}- x_1^*<0$, and
\item[(2)]
for $\kappa= (\kappa_1^*,\ldots, \kappa^*_{\ell-1}, \kappa_{\ell}^*-{\rm sign}(\delta)\delta', \kappa_{\ell+1}^*, \ldots, \kappa_m^*)$, the equation $q(\kappa;x_1)=0$ has no real solution in the interval $(x_1^*-\epsilon, x_1^*+\epsilon)$.
\end{itemize}
}
\end{lemma}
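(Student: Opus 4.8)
The plan is to transfer the entire analysis to the scalar rational function $\phi(\hat\kappa;x_1)$ and to exploit that $q$ is affine-linear in $\kappa_\ell$. First I would invoke Lemma~\ref{lm:phi}: since $x_1^*$ is a multiplicity-$2N$ solution of $q(\kappa^*;x_1)=0$ and $\phi$ is well-defined at $x_1^*$, it yields $\phi(\hat\kappa^*;x_1^*)=\kappa_\ell^*$, $\pd{\phi}{x_1}(\hat\kappa^*;x_1^*)=\cdots=\pd{^{2N-1}\phi}{x_1^{2N-1}}(\hat\kappa^*;x_1^*)=0$, and $\pd{^{2N}\phi}{x_1^{2N}}(\hat\kappa^*;x_1^*)\neq0$. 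By Taylor's theorem, $x_1^*$ is therefore a strict local extremum of $x_1\mapsto\phi(\hat\kappa^*;x_1)$, of minimum or maximum type according to the sign $s_0:={\rm sign}\big(\pd{^{2N}\phi}{x_1^{2N}}(\hat\kappa^*;x_1^*)\big)$; this $s_0$ is exactly the sign I will assign to $\delta$.

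Next I record the two algebraic facts that drive everything. From \eqref{eq:pq}, $q$ is affine in $\kappa_\ell$, so perturbing only the $\ell$-th coordinate from $\kappa_\ell^*$ to $\kappa_\ell^*+t$ gives $q(\kappa;x_1)=q(\kappa^*;x_1)+t\,(\beta_{11}-\alpha_{11})\lambda_\ell P_\ell(x_1)$, and wherever $P_\ell(x_1)\neq0$ one has, by \eqref{eq:phi}, the exact identity
\begin{equation*}
\phi(\hat\kappa^*;x_1)-\kappa_\ell^* \;=\; -\,\frac{q(\kappa^*;x_1)}{(\beta_{11}-\alpha_{11})\lambda_\ell P_\ell(x_1)},
\end{equation*}
so that $q(\kappa;x_1)=0$ is equivalent to $\phi(\hat\kappa^*;x_1)=\kappa_\ell$. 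Differentiating the relation $\mu(\phi(\hat\kappa^*;x_1),x_1)=0$ from the proof of Lemma~\ref{lm:phi} shows that at any such root $\pd{q}{x_1}=-(\beta_{11}-\alpha_{11})\lambda_\ell P_\ell\,\pd{\phi}{x_1}$; hence, by Definition~\ref{def:multiplicity} and Assumption~\ref{assumption}, a root is simple precisely when $\pd{\phi}{x_1}\neq0$ there.

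With these in hand I fix $\epsilon>0$ and choose $\eta\in(0,\epsilon)$ so small that on $(x_1^*-\eta,x_1^*+\eta)$ we have $P_\ell\neq0$ (by continuity and well-definedness at $x_1^*$, cf.\ Lemma~\ref{lm:well-defined}), $x_1^*$ is the only root of $q(\kappa^*;\cdot)$, and $\pd{\phi}{x_1}(\hat\kappa^*;\cdot)$ vanishes only at $x_1^*$ with sign equal to $s_0\,{\rm sign}(x_1-x_1^*)$ off $x_1^*$. I then set ${\rm sign}(\delta)=s_0$ and $|\delta|=\min\{\,|\phi(\hat\kappa^*;x_1^*-\eta)-\kappa_\ell^*|,\,|\phi(\hat\kappa^*;x_1^*+\eta)-\kappa_\ell^*|\,\}>0$. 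For part (1), where $\kappa_\ell=\kappa_\ell^*+{\rm sign}(\delta)\delta'$, strict monotonicity of $\phi$ on each side together with the intermediate value theorem produces exactly one solution of $\phi(\hat\kappa^*;x_1)=\kappa_\ell$ in each of $(x_1^*-\eta,x_1^*)$ and $(x_1^*,x_1^*+\eta)$; both lie within $\epsilon$ of $x_1^*$ and are simple because $\pd{\phi}{x_1}\neq0$ away from $x_1^*$. For part (2), where $\kappa_\ell=\kappa_\ell^*-{\rm sign}(\delta)\delta'$, the \emph{even} multiplicity forces $q(\kappa^*;\cdot)$ to keep a constant sign on the punctured neighborhood, so $\phi(\hat\kappa^*;x_1)-\kappa_\ell^*$ has constant sign $s_0$ there while $\kappa_\ell-\kappa_\ell^*$ has sign $-s_0$; hence $\phi(\hat\kappa^*;x_1)=\kappa_\ell$, and therefore $q(\kappa;x_1)=0$, has no root in the neighborhood.

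I expect the main obstacle to be twofold: the sign bookkeeping that ties the admissible perturbation direction to the geometry of $\phi$ (i.e.\ pinning ${\rm sign}(\delta)=s_0$), and, for part (2), propagating the absence of roots from a small neighborhood of $x_1^*$ to the whole prescribed interval $(x_1^*-\epsilon,x_1^*+\epsilon)$. The evenness of the multiplicity is precisely what makes the latter possible: it prevents $q(\kappa^*;\cdot)$ from changing sign at $x_1^*$, so that the constant-sign version of the identity above shows one perturbation direction lifts the graph of $q$ entirely off zero while the opposite direction pushes it through zero twice. Extending this to the full interval requires $x_1^*$ to be isolated from the other (finitely many) roots of $q(\kappa^*;\cdot)$ and from the zeros of $P_\ell$ inside $(x_1^*-\epsilon,x_1^*+\epsilon)$; securing this cleanliness by choosing the working neighborhood appropriately is the delicate technical step, and it is where I would concentrate the rigor.
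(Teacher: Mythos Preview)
Your approach is essentially the same as the paper's: both invoke Lemma~\ref{lm:phi} to transfer the analysis to $\phi$, set ${\rm sign}(\delta)$ equal to the sign of $\pd{^{2N}\phi}{x_1^{2N}}(\hat\kappa^*;x_1^*)$, and exploit the even-order extremum structure of $\phi$ at $x_1^*$ to separate the two perturbation directions. The only cosmetic difference is that the paper writes out the Taylor expansion with Lagrange remainder and ``solves'' $\phi(\hat\kappa^*;x_1)=\kappa_\ell^*+\tilde\delta'$ explicitly, whereas you argue via strict monotonicity on each side of $x_1^*$ and the intermediate value theorem; your version is arguably tidier, since in the paper's formula the point $\xi$ depends on $x_1$.

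Regarding the obstacle you flag for part~(2) (extending ``no roots'' from a small neighborhood to the full interval $(x_1^*-\epsilon,x_1^*+\epsilon)$): the paper handles this by declaring, ``without loss of generality,'' that $\pd{^{2N}\phi}{x_1^{2N}}(\hat\kappa^*;\cdot)$ is nonvanishing on the entire $\epsilon$-interval, so that the Lagrange-remainder Taylor formula gives $\phi(\hat\kappa^*;x_1)-\kappa_\ell^*$ a constant sign $s_0$ throughout the interval, immediately ruling out solutions of $\phi=\kappa_\ell^*-s_0\delta'$ there. You can adopt the same device; alternatively, since every application of the lemma in the paper chooses $\epsilon$ smaller than half the distance to the nearest other root, your concern about other roots of $q(\kappa^*;\cdot)$ or zeros of $P_\ell$ intruding never actually arises.
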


\begin{proof}
\textcolor{black}{
By Lemma \ref{lm:phi}, we have
$$\phi(\hat \kappa^*;x_1^*)=\kappa_l^*, \pd{\phi}{x_1}(\hat \kappa^*;x_1^*)=0, \ldots, \pd{^{2N-1}\phi}{x_1^{2N-1}}(\hat \kappa^*;x_1^*)=0, \; \text{and}\; \pd{^{2N}\phi}{x_1^{2N}}(\hat \kappa^*;x_1^*)\neq 0.$$
Here, we recall that $\hat \kappa^* = (\kappa^*_1, \ldots, \kappa^*_{\ell-1}, \kappa^*_{\ell+1}, \ldots, \kappa^*_{m})$.
For any $\epsilon>0$,
without loss of generality, we assume $\pd{^{2N}\phi}{x_1^{2N}}(\hat \kappa^*;x_1)\neq 0$ for any
$x_1\in (x_1^*-\epsilon, x_1^*+\epsilon)$.
So, the Taylor expansion of $\phi(\hat \kappa^*;x_1)$ over $(x_1^*-\epsilon, x_1^*+\epsilon)$ is
{\footnotesize
\begin{align}
\phi(\hat \kappa^*;x_1)\;&=\;\phi(\hat \kappa^*;x_1^*)+\sum_{k=1}^{2N-1}\frac{1}{k!}\pd{^{k}\phi}{x_1^{k}}(\hat \kappa^*;x_1^*)(x_1-x^*_1)^k
+\frac{1}{(2N)!}\pd{^{2N}\phi}{x_1^{2N}}(\hat \kappa^*;\xi)(x_1-x^*_1)^{2N} \notag\\
&=\; \kappa^*_l + \frac{1}{(2N)!}\pd{^{2N}\phi}{x_1^{2N}}(\hat \kappa^*;\xi)(x_1-x^*_1)^{2N}, \; \;\;\;\;\text{where}\; \xi\in (x_1^*-\epsilon, x_1^*+\epsilon). \label{eq:tayloreven}
\end{align}
}Choose $\delta\in {\mathbb R}$ such that ${\rm sign}(\delta)={\rm sign}(\pd{^{2N}\phi}{x_1^{2N}}(\hat \kappa^*;\xi))$ and
 $\left(\delta (2N)!/\pd{^{2N}\phi}{x_1^{2N}}(\hat \kappa^*;\xi)\right)^{\frac{1}{2N}}<\epsilon$. For any
$\delta'\in (0, |\delta|)$, let
\begin{align*}
\tilde\delta'\;:=\; {\rm sign}(\delta)\delta'.
\end{align*}
By \eqref{eq:tayloreven}, we solve the equation $\phi(\hat \kappa^*;x_1)=\kappa_l^*+\tilde\delta'$, and we get two
distinct real solutions:
$$x_1^{(1)}=x^*_1+\left(\tilde\delta' (2N)!/\pd{^{2N}\phi}{x_1^{2N}}(\hat \kappa^*;\xi)\right)^{\frac{1}{2N}}, \;\text{and}\; x_1^{(2)}=x^*_1-\left(\tilde\delta' (2N)!/\frac{d^{2N} \phi}{d x_1^{2N}}(\hat\kappa^*;\xi)\right)^{\frac{1}{2N}}.$$
By \eqref{eq:pq} and \eqref{eq:phi},
$x_1^{(1)}$ and $x_1^{(2)}$ are real solutions of the equation $q(\tilde \kappa^*;x_1)=0$, where $\tilde \kappa^*:= (\kappa_1^*,\ldots,\kappa_{l-1}^*,\kappa_l^*+\tilde\delta',\kappa_{l+1}^*, \ldots, \kappa_m^*)$.
By \eqref{eq:tayloreven}, we have $\pd{\phi}{x_1}(\hat \kappa^*;x_1^{(i)})\neq 0$ for $i\in \{1, 2\}$. So, by
Lemma \ref{lm:phi}, both $x_1^{(1)}$ and $x_1^{(2)}$ are simple solutions of $q(\tilde \kappa^*;x_1)=0$.  
Clearly, by \eqref{eq:deriv-2}, the equation $\phi(\hat \kappa^*;x_1)=\kappa_l^*-\tilde\delta'$ has no real solutions for $x_1$, and so,  for $\kappa= (\kappa_1^*,\ldots,\kappa_{l-1}^*,\kappa_l^*-\tilde \delta',\kappa_{l+1}^*, \ldots, \kappa_m^*)$,
the equation $q(\kappa;x_1)=0$ has no real solutions.}
\end{proof}

\begin{lemma} \label{lm:odd}
Given a network $G$,
   suppose the stoichiometric subspace of $G$ is one-dimensional.
\textcolor{black}{
For a total-constant vector $c^*$, let $q(\kappa;x_1)$ be the polynomial defined in \eqref{eq:q}, let  $\ell$ be the index defined in \eqref{eq:ell} and let $\phi(\hat \kappa; x_1)$ be the rational function defined in \eqref{eq:phi}.
If for a rate-constant vector $\kappa^*$,
 $x_1^*$  is a simple solution, or a multiplicity-$(2N+1)$ ($N\in {\mathbb Z}_{>0}$) solution of
 $q(\kappa^*;  x_1)=0$, and if $\phi(\hat \kappa; x_1)$ is well-defined at $x_1^*$,
 then for every $\epsilon>0$,  there exists $\delta \in {\mathbb R}$ such that for all $\delta' \in (0,|\delta|)$,
 \begin{itemize}
 \item[(1)] for
 $\kappa= (\kappa_1^*,\ldots, \kappa^*_{\ell-1}, \kappa_{\ell}^*+{\rm sign}(\delta)\delta', \kappa_{\ell+1}^*, \ldots, \kappa_m^*)$, the equation $q(\kappa;x_1)=0$ has a simple real
solution $x_1^{(1)}$, for which $0<x_1^{(1)}-x_1^*<\epsilon$. 
\item[(2)]
for $\kappa= (\kappa_1^*,\ldots, \kappa^*_{\ell-1}, \kappa_{\ell}^*-{\rm sign}(\delta)\delta', \kappa_{\ell+1}^*, \ldots, \kappa_m^*)$, the equation $q(\kappa;x_1)=0$ has a simple real solution $x_1^{(2)}$, for which $-\epsilon<x_1^{(2)}- x_1^*<0$.
\end{itemize}
}
\end{lemma}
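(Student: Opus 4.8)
The plan is to mirror the structure of the even-multiplicity case (Lemma \ref{lm:even}), applying Lemma \ref{lm:phi} to the function $\phi(\hat\kappa; x_1)$ and then performing a Taylor expansion of $\phi$ about $x_1^*$, but now tracking the odd leading term rather than an even one. First I would invoke Lemma \ref{lm:phi}: since $x_1^*$ is a simple solution or a multiplicity-$(2N+1)$ solution of $q(\kappa^*; x_1)=0$, and $\phi$ is well-defined at $x_1^*$, we obtain $\phi(\hat\kappa^*; x_1^*)=\kappa_\ell^*$, all derivatives $\frac{\partial^k \phi}{\partial x_1^k}(\hat\kappa^*; x_1^*)=0$ for $1\le k\le 2N$, and $\frac{\partial^{2N+1}\phi}{\partial x_1^{2N+1}}(\hat\kappa^*; x_1^*)\neq 0$. (In the simple case $N=0$, this reads $\phi(\hat\kappa^*;x_1^*)=\kappa_\ell^*$ and $\frac{\partial\phi}{\partial x_1}(\hat\kappa^*;x_1^*)\neq 0$, which is the easy monotone case.)

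Next I would fix $\epsilon>0$ and shrink it if necessary so that $\frac{\partial^{2N+1}\phi}{\partial x_1^{2N+1}}(\hat\kappa^*; x_1)$ keeps a constant nonzero sign on $(x_1^*-\epsilon, x_1^*+\epsilon)$. Writing the Taylor expansion with Lagrange remainder, all intermediate terms vanish and we get
\begin{align}\label{eq:taylorodd}
\phi(\hat\kappa^*; x_1)\;=\;\kappa_\ell^* + \frac{1}{(2N+1)!}\frac{\partial^{2N+1}\phi}{\partial x_1^{2N+1}}(\hat\kappa^*; \xi)(x_1-x_1^*)^{2N+1},
\end{align}
for some $\xi$ in the interval. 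The crucial structural difference from Lemma \ref{lm:even} is that $(x_1-x_1^*)^{2N+1}$ is an \emph{odd} power, so $\phi(\hat\kappa^*; x_1)-\kappa_\ell^*$ is a strictly monotone, sign-changing function of $x_1$ near $x_1^*$: it is positive on one side and negative on the other. Consequently, for a small positive perturbation $\kappa_\ell^*+\tilde\delta'$ the equation $\phi(\hat\kappa^*; x_1)=\kappa_\ell^*+\tilde\delta'$ has exactly one real root on one side of $x_1^*$, and for $\kappa_\ell^*-\tilde\delta'$ it has exactly one root on the other side.

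I would then choose $\delta\in\mathbb R$ so that ${\rm sign}(\delta)={\rm sign}\big(\frac{\partial^{2N+1}\phi}{\partial x_1^{2N+1}}(\hat\kappa^*;\xi)\big)$ and so that the resulting root stays within distance $\epsilon$, and set $\tilde\delta':={\rm sign}(\delta)\delta'$ for $\delta'\in(0,|\delta|)$. Solving \eqref{eq:taylorodd} gives the single root
\[
x_1^{(1)}\;=\;x_1^*+\Big(\tilde\delta'(2N+1)!\Big/\tfrac{\partial^{2N+1}\phi}{\partial x_1^{2N+1}}(\hat\kappa^*;\xi)\Big)^{\frac{1}{2N+1}},
\]
with $0<x_1^{(1)}-x_1^*<\epsilon$ for the $+\tilde\delta'$ perturbation, and symmetrically a single root $x_1^{(2)}$ with $-\epsilon<x_1^{(2)}-x_1^*<0$ for the $-\tilde\delta'$ perturbation; here the odd root is real and unique precisely because odd roots of real numbers are single-valued and sign-preserving. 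As in Lemma \ref{lm:even}, I would translate these into solutions of $q(\kappa; x_1)=0$ via \eqref{eq:pq}--\eqref{eq:phi}, and use the fact that $\frac{\partial\phi}{\partial x_1}(\hat\kappa^*; x_1^{(i)})\neq 0$ together with Lemma \ref{lm:phi} to conclude each root is simple. I expect the main obstacle to be purely bookkeeping rather than conceptual: carefully verifying the sign assignment so that the root lands on the \emph{correct} side for each of the two perturbation directions (this is where the odd parity does the essential work and must be tracked without error), and handling the simple-solution base case $N=0$ uniformly with the higher-multiplicity case.
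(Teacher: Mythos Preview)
Your proposal is correct and follows essentially the same route as the paper: invoke Lemma~\ref{lm:phi}, shrink $\epsilon$ so the $(2N{+}1)$-th derivative of $\phi$ keeps constant sign, write the Taylor expansion with Lagrange remainder, choose $\delta$ with the matching sign, solve the resulting odd-power equation to place $x_1^{(1)}$ and $x_1^{(2)}$ on opposite sides of $x_1^*$, and finally check simplicity via $\partial\phi/\partial x_1\neq 0$ and Lemma~\ref{lm:phi}. Your explicit flagging of the $N=0$ (simple-root) case is a welcome bit of care, since Lemma~\ref{lm:phi} is stated only for multiplicity $\geq 2$; the paper glosses over this, but the chain-rule identity~\eqref{eq:deriv-2} handles it directly.
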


\begin{proof}
\textcolor{black}{
Similar to the proof of Lemma \ref{lm:even}, for any $\epsilon>0$,
without loss of generality, we assume $\pd{^{2N+1}\phi}{x_1^{2N+1}}(\hat \kappa^*;x_1)\neq 0$ for any
$x_1\in (x_1^*-\epsilon, x_1^*+\epsilon)$.
By Lemma \ref{lm:phi},  the Taylor expansion of $\phi(\hat \kappa^*; x_1)$ over $(x_1^*-\epsilon, x_1^*+\epsilon)$ is
{\footnotesize
\begin{align}
\phi(\hat \kappa^*; x_1)\;&=\;\phi(\hat \kappa^*; x_1^*)+\sum_{k=1}^{2N}\frac{1}{k!}\pd{^{k}\phi}{x_1^k}(\hat \kappa^*;x_1^*)(x_1-x^*_1)^k
+\frac{1}{(2N+1)!}\pd{^{2N+1}\phi}{x_1^{2N+1}}(\hat \kappa^*;\xi)(x_1-x^*_1)^{2N+1} \notag\\
&=\; \kappa^*_l + \frac{1}{(2N+1)!}\pd{^{2N+1}\phi}{x_1^{2N+1}}(\hat \kappa^*;\xi)(x_1-x^*_1)^{2N+1}, \;\;\;\;\; \text{where}\; \xi\in (x_1^*-\epsilon, x_1^*+\epsilon). \label{eq:taylorodd}
\end{align}
}
Choose $\delta\in {\mathbb R}$ such that ${\rm sign}(\delta)={\rm sign}(\pd{^{2N+1}\phi}{x_1^{2N+1}}(\hat \kappa^*;\xi))$ and
 $$\left(\delta (2N+1)!/\pd{^{2N+1}\phi}{x_1^{2N+1}}(\hat \kappa^*;\xi)\right)^{\frac{1}{2N+1}}<\epsilon.$$ For any
$\delta'\in (0, |\delta|)$, let
\begin{align*}
\tilde\delta'\;:=\; {\rm sign}(\delta)\delta'.
\end{align*}
Then, for $\tilde \kappa^*= (\kappa_1^*,\ldots,\kappa_{l-1}^*,\kappa_l^*+\tilde\delta',\kappa_{l+1}^*, \ldots, \kappa_m^*)$,
the equation $q(\tilde \kappa^*;x_1)=0$ has a real solution
$x_1^{(1)}=x^*_1+\left(\tilde \delta' (2N+1)!/\pd{^{2N+1}\phi}{x_1^{2N+1}}(\hat \kappa^*;\xi)\right)^{\frac{1}{2N+1}}$.
By \eqref{eq:taylorodd}, we have $\pd{\phi}{x_1}(\hat \kappa^*;x_1^{(1)})\neq 0$. So, by Lemma \ref{lm:phi}, we know $\pd{q}{x_1}(\hat \kappa^*;x_1^{(1)})\neq 0$, and hence, $x_1^{(1)}$ is a  simple solution.
Similarly,  for $\kappa= (\kappa_1^*,\ldots,\kappa_{l-1}^*,\kappa_l^*-\tilde\delta',\kappa_{l+1}^*, \ldots, \kappa_m^*)$,
the equation $q(\kappa;x_1)=0$ has a simple real solution
$$x_1^{(2)}=x^*_1-\left(\tilde\delta' (2N+1)!/\pd{^{2N+1}\phi}{x_1^{2N+1}}(\hat \kappa^*;\xi)\right)^{\frac{1}{2N+1}}.$$
}
\end{proof}

\begin{example}\label{ex:evenodd}
We illustrate by Figure \ref{fig:main1} how Lemma \ref{lm:even} and Lemma \ref{lm:odd} work.
Consider the network $G$ in Example \ref{ex:main1}.
Choose
$c_1^*=-9$. Then, we can compute that the polynomial defined in \eqref{eq:q}  for $c^*=(c_1^*)$ is
$$q(\kappa; x_1) \;=\; \kappa_1 (-x_1+9)-\kappa_2 x_1+ \kappa_3 x_1^2 (-x_1+9).$$
By the computation in Example \ref{ex:main1}, we know the index $\tau$ defined in \eqref{eq:setk} is
$\tau=1$. Also, we know
$\gamma_{11}=0$, $\gamma_{12}=1$, and $\gamma_{13}=2$.
So,  the index $\ell$ defined in \eqref{eq:ell} is $1$.
So, the function defined in \eqref{eq:phi} is
$$\phi(\hat \kappa; x_1)\;=\;\frac{\kappa_2 x_1-\kappa_3 x_1^2 (-x_1+9)}{-x_1+9}.$$
We substitute $\hat \kappa = (\kappa_2^*, \kappa_3^*) = (16,\frac{3}{2})$ into $q(\kappa; x_1)$, and we get a
bivariate polynomial
$$ \mu(\kappa_{1}, x_1)\; :=\; \kappa_1 (-x_1+9)-16 x_1+ \frac{3}{2} x_1^2 (-x_1+9).$$
\begin{itemize}
\item[(a)]
\textcolor{black}{
For $\kappa^*:=(\kappa_1^*, \kappa_2^*, \kappa_3^*)=(\frac{1}{2},16,\frac{3}{2})$, the equation $q(\kappa^*;x_1)=0$ has a multiplic-ity-$2$ solution, which is $x^*_1\approx 0.6961103652$.
Also, $x^*_1$ is  a multiplicity-$2$ solution of $\mu(\kappa^*_1, x_1)=0$.
Clearly, $\phi(\hat \kappa; x_1)$ is well-defined at $x^*_1$.
We plot the point $(\kappa_{1}^*, x^*_1)$ and
 $\mu(\kappa_{1}, x_1)=0$ for \textcolor{black}{ $0\leq \kappa_{1}\leq 0.8$ and $0\leq x_1\leq 1.5$} in Figure \ref{fig:main1} (a).
 Choose $\delta =-\frac{1}{2}$. It is seen from Figure \ref{fig:main1} (a) that for any
 $\delta'\in (0, |\delta|)$, the equation $\mu(\kappa^*_{1}-\delta', x_1)=0$ has two distinct simple real solutions (e.g., see the two solid boxes for $\kappa_1=0.3$ in Figure \ref{fig:main1} (a)), which are sufficiently close to $x_1^*$, and the equation $\mu(\kappa^*_{1}+\delta', x_1)=0$ has no real solution.
}
\item[(b)]For $\kappa^*:=(\kappa_1^*, \kappa_2^*, \kappa_3^*)=(\frac{3}{10},16,\frac{3}{2})$, the equation $q(\kappa^*;x_1)=0$ has a simple solution, which is \textcolor{black}{$x^*_1\approx 1.201246094$}.
Clearly, $\phi(\hat \kappa; x_1)$ is well-defined at $x^*_1$.
We plot the point $(\kappa_{1}^*, x^*_1)$ and
 $\mu(\kappa_{1}, x_1)=0$ for \textcolor{black}{ $0\leq \kappa_{1}\leq 0.8$ and $0.8\leq x_1\leq 1.5$} in Figure \ref{fig:main1} (b).
 Choose $\delta =\frac{1}{4}$. It is seen from Figure \ref{fig:main1} (b) that for any
 $\delta'\in (0, \delta)$, the equation $\mu(\kappa^*_{1}+\delta', x_1)=0$ has a simple real solution (e.g., see the solid box at the intersection of the blue line $\kappa_1=0.5$ and the red curve in Figure \ref{fig:main1} (b)), which is sufficiently close to $x_1^*$, and similarly, the equation $\mu(\kappa^*_{1}-\delta', x_1)=0$ has a simple real solution too.
\end{itemize}
\end{example}
 \begin{figure}[h]
\centering
\subfigure[]{
\includegraphics[width=0.48\textwidth, height=0.48\textwidth]{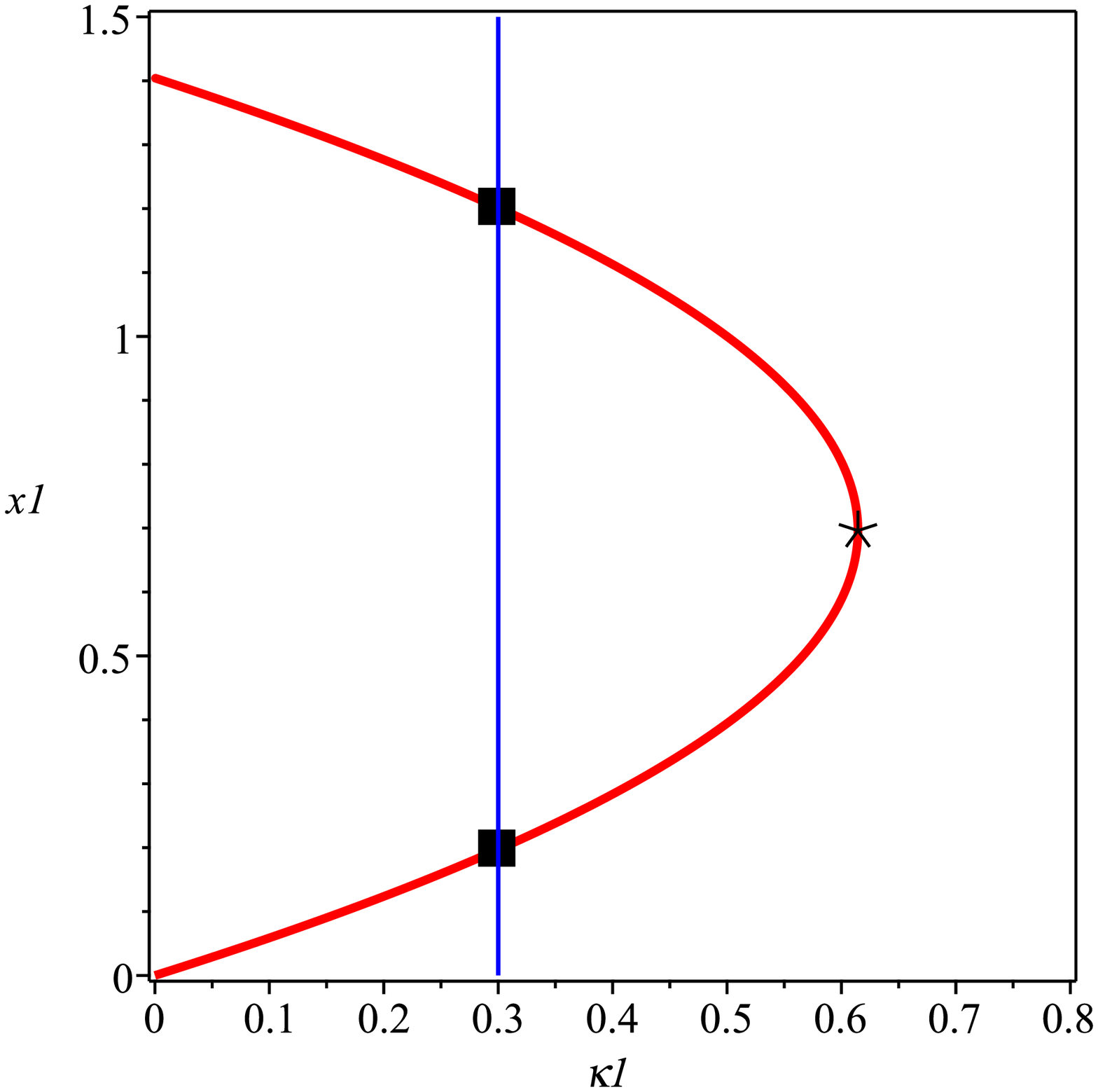}}
\subfigure[]{
\includegraphics[width=0.48\textwidth, height=0.48\textwidth]{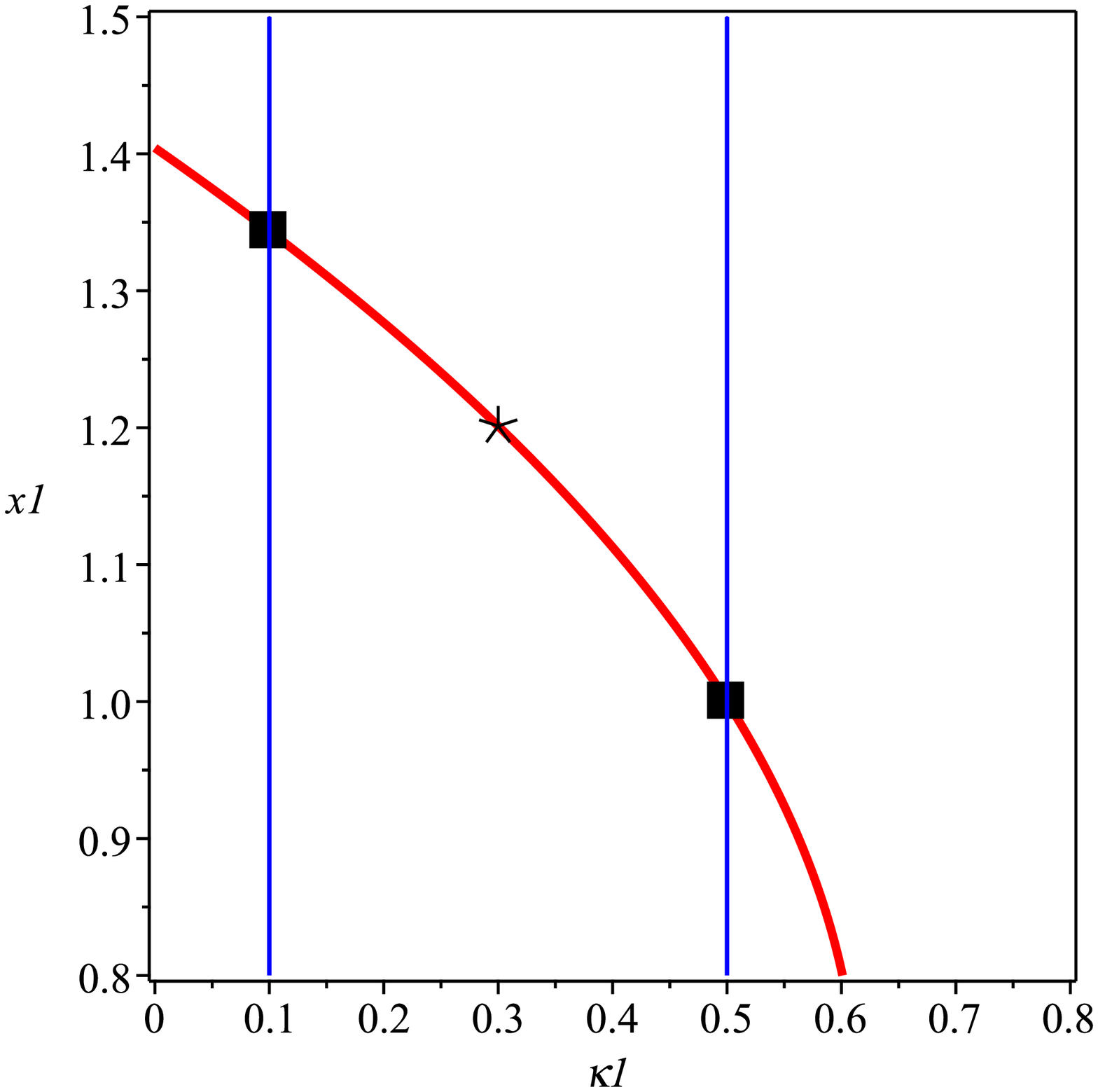}}
\caption{The two figures (a) and (b) illustrate Lemma \ref{lm:even}  and Lemma \ref{lm:odd}, respectively. See the corresponding examples in Example \ref{ex:evenodd}. These figures are generated by {\tt Maple2020}, see ``Plot.mw" in Table \ref{tab:sup}.}
\label{fig:main1}
\end{figure}

{\bf Proof of Theorem \ref{thm:nc}.}
\begin{proof}
We only need to show $cap_{pos}(G)\leq cap_{nondeg}(G)$.
 Suppose $cap_{pos}(G)=N$, i.e., there exist a rate-constant vector $\kappa^*$ and a total-constant vector $c^*$ such that $G$ has $N$ distinct positive steady states $x^{(1)},\ldots,x^{(N)}$ in ${\mathcal P}_{c^*}$.
 For the total-constant vector $c^*$, let $q(\kappa;x_1)$ be the polynomial defined in \eqref{eq:q}, let  $\ell$ be the index defined in \eqref{eq:ell} and let $\phi(\hat \kappa; x_1)$ be the rational function defined in \eqref{eq:phi}.
 By the proof of Lemma \ref{lm:qroot}, the first coordinates of those positive steady states \textcolor{black}{$x^{(1)}_1,\ldots,x^{(N)}_1$ are solutions of $q(\kappa^*;x_1)=0$ in the interval
 $\bigcap\limits_{i=1}^sI_i$, where $I_i$ is defined in \eqref{eq:iinterval}.}
 Note that $\bigcap\limits_{i=1}^sI_i\subset I$, where $I$ is defined in \eqref{eq:tinterval}. So, by Lemma \ref{lm:well-defined} (1),
 the function $\phi(\hat \kappa; x_1)$ is well-defined at $x^{(i)}_1$ for any $i\in \{1, \ldots, N\}$.
 Without loss of generality,
 assume the multiplicity of any solution in $\{x^{(1)}_1,\ldots,x^{(v)}_1\}$ $(v\leq N)$ is even,
 \textcolor{black}{and
 any solution in $\{x^{(v+1)}_1,\ldots,x^{(N)}_1\}$ is simple or has an odd multiplicity.
Denote the interval $\bigcap\limits_{i=1}^sI_i$  by $(b, E)$.  Here, we only prove the case when $E\neq +\infty$.
If $E=+\infty$, the argument  is  similar.
 Let
 $$\epsilon \;=\;\min\{\frac{\min \limits_{1\leq i\leq N}\{x^{(i)}_1\}-b}{2},\; \min\{\frac{|x^{(i)}_1-x^{(j)}_1|}{2}|1\leq i<j\leq N\},\; \frac{E-\max\limits_{1\leq i\leq N} \{x^{(i)}_1\}}{2}\}.$$
 By Lemma \ref{lm:even}, for $\epsilon>0$,  and for every $k\in \{1, \ldots, v\}$,
 there exists $\delta_k\in {\mathbb R}$ such that for any $\delta'\in (0, |\delta_k|)$,   $q(\kappa; x_1)=0$ has two distinct simple solutions in the
 interval $(x_1^{(k)}-\epsilon, x_1^{(k)}+\epsilon)$ for $$\kappa=\left(\kappa_1^*,\ldots, \kappa^*_{\ell-1}, \kappa_{\ell}^*+{\rm sign}(\delta_k)\delta', \kappa_{\ell+1}^*, \ldots, \kappa_m^*\right),$$ and
  $q(\kappa; x_1)=0$ has no real solutions in the
 interval $(x_1^{(k)}-\epsilon, x_1^{(k)}+\epsilon)$ for $$\kappa=\left(\kappa_1^*,\ldots, \kappa^*_{\ell-1}, \kappa_{\ell}^*-{\rm sign}(\delta_k)\delta', \kappa_{\ell+1}^*, \ldots, \kappa_m^*\right).$$ Note that there exist  
 $\lceil \frac{v}{2} \rceil$ indices $k_1, \ldots, k_{\lceil \frac{v}{2} \rceil}$ in $\{1, \ldots, v\}$ such that
 $\delta_{k_1}, \ldots, \delta_{k_{\lceil \frac{v}{2} \rceil}}$ have the same sign.
 Without loss of generality, assume $\delta_{k_1}>0$.
By  Lemma \ref{lm:odd}, for $\epsilon>0$,  and for every $j\in \{v+1, \ldots, N\}$,
 there exists $\delta_j>0$ such that for any $\delta'\in (0, \delta_j)$,   $q(\kappa;x_1)=0$ has a simple solution in the
 interval $(x_1^{(j)}-\epsilon, x_1^{(j)}+\epsilon)$ for $\kappa=\left(\kappa^*_1,\ldots,\kappa^*_{\ell-1}, \kappa^*_{\ell}\pm \delta', \kappa^*_{\ell+1},\ldots,\kappa^*_m\right)$. Let $\delta=\min \{|\delta_1|, \ldots, |\delta_v|, \delta_{v+1}, \ldots, \delta_N\}$.
 Therefore, for any $\delta'\in (0, \delta)$, for  $\kappa=\left(\kappa^*_1,\ldots,\kappa^*_{\ell-1}, \kappa^*_{\ell}+ \delta', \kappa^*_{\ell+1},\ldots,\kappa^*_m\right)$, $q(\kappa; x_1)=0$ has
 at least $2\lceil \frac{v}{2} \rceil + N-v$ distinct simple solutions in $(b, E)$. By Lemma \ref{lm:qroot}, the network $G$ admits at least $2\lceil \frac{v}{2} \rceil + N-v$  positive steady states. By Lemma \ref{lm:jactildeg} and Lemma \ref{lm:nonde}, these $2\lceil \frac{v}{2} \rceil + N-v$  steady states are nondegenerate. So, we have $cap_{pos}(G)=N\leq 2\lceil \frac{v}{2} \rceil + N-v \leq cap_{nondeg}(G)$.}
 \end{proof}

\begin{corollary}\label{cry:ww'}
 \textcolor{black}{
Given a network $G$ \eqref{eq:network} with a one-dimensional stoichiometric subspace}, suppose $cap_{pos}(G)<+\infty$.
 Assume ${\mathcal W}$ and ${\mathcal B}$ are the two sets of parameters defined in \eqref{eq:witness} and \eqref{eq:bwd}.
 Define a subset ${\mathcal W}_{nondeg}$ of ${\mathcal W}$:
 {\footnotesize
 \begin{align}
 {\mathcal W}_{nondeg}\;&:=\; \{(\kappa^*, c^*)\in {\mathcal W}|\text{for}\; (\kappa^*, c^*),\; \text{all positive steady states of}\; G \; \text{are nondegenerate}\}. \label{eq:witness'}
 \end{align}}Then, ${\mathcal W}\cap{\mathcal B}\neq \emptyset$ if and only if ${\mathcal W}_{nondeg}\cap{\mathcal B}\neq \emptyset$.
 \end{corollary}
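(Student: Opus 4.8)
The reverse implication is immediate: since $\mathcal{W}_{nondeg} \subseteq \mathcal{W}$, the nonemptiness of $\mathcal{W}_{nondeg} \cap \mathcal{B}$ forces that of $\mathcal{W} \cap \mathcal{B}$. The plan is therefore to prove the forward implication by fixing a pair $(\kappa^*, c^*) \in \mathcal{W} \cap \mathcal{B}$ and producing a pair $(\tilde\kappa^*, c^*) \in \mathcal{W}_{nondeg} \cap \mathcal{B}$ that differs from it only in a single rate constant. Keeping $c^*$ fixed is the crucial device, since all of the combinatorial data $A_i$, $B_i$, $\gamma_{kj}$, $\mathcal{J}$, $\mathcal{H}$, $\tau$, $\mathcal{L}$, and $\ell$ depend only on $c^*$.

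First I would rerun the perturbation argument from the proof of Theorem \ref{thm:nc}. Since $(\kappa^*, c^*) \in \mathcal{W}$, the network has $N := cap_{pos}(G)$ positive steady states in $\mathcal{P}_{c^*}$; by Lemma \ref{lm:qroot} their first coordinates are the roots of $q(\kappa^*; x_1) = 0$ in $\bigcap_{i=1}^s I_i \subseteq I$, and $\phi$ is well-defined at each of them by Lemma \ref{lm:well-defined}(1). Applying Lemma \ref{lm:even} to the even-multiplicity roots and Lemma \ref{lm:odd} to the simple or odd-multiplicity roots, and fixing the common sign of the perturbation by the pigeonhole step of Theorem \ref{thm:nc}, there is an $\eta_0 > 0$ so that replacing $\kappa_\ell^*$ by $\kappa_\ell^* + \eta$ (with $\eta$ of the chosen sign and $0 < |\eta| < \eta_0$) makes $q(\,\cdot\,; x_1) = 0$ have at least $N$ distinct \emph{simple} roots in the interval. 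By maximality of $cap_{pos}(G)$ there are then exactly $N$, and by Lemma \ref{lm:jactildeg} and Lemma \ref{lm:nonde} they correspond to $N$ nondegenerate positive steady states; shrinking $\eta_0$ also keeps the perturbed vector $\tilde\kappa^*$ in $\mathbb{R}^m_{>0}$. Thus $(\tilde\kappa^*, c^*) \in \mathcal{W}_{nondeg}$.

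The only delicate point is that the perturbation moves exactly $\kappa_\ell$, and $\ell \in \mathcal{L}$ because $\gamma_{\tau\ell} = \min_j \gamma_{\tau j} = 0$; hence $\mathcal{B}$-membership is \emph{not} automatically preserved. I would resolve this by continuity: for fixed $c^*$ the left-hand side of \eqref{eq:bwd} is an affine function of $\kappa_\ell$, since every remaining factor — the $(\beta_{1j}-\alpha_{1j})$, the $\mathcal{C}_j$-type products, and the powers $\left(\frac{B_k}{|A_k|} - \frac{A_k}{|A_k|}\frac{B_\tau}{A_\tau}\right)^{\gamma_{kj}}$ — depends only on $c^*$. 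Because $(\kappa^*, c^*) \in \mathcal{B}$ the inequality in \eqref{eq:bwd} holds \emph{strictly}, so it persists for all sufficiently small $|\eta|$. Taking $\eta$ below the minimum of the two smallness thresholds (one from the nondegeneracy step, one from preserving the strict inequality) yields $(\tilde\kappa^*, c^*) \in \mathcal{W}_{nondeg} \cap \mathcal{B}$, completing the proof. I expect this last continuity observation, forced by $\ell \in \mathcal{L}$, to be the main obstacle; it is exactly the reason the argument needs the strictness of the inequality defining $\mathcal{B}$ rather than a cruder independence claim.
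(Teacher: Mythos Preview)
Your proof is correct and follows essentially the same approach as the paper's: both perturb the single rate constant $\kappa_\ell$ via the argument of Theorem~\ref{thm:nc} to land in $\mathcal{W}_{nondeg}$, and both invoke continuity of the left-hand side of \eqref{eq:bwd} in $\kappa$ (you sharpen this to affineness in $\kappa_\ell$) to retain membership in $\mathcal{B}$. Your explicit identification of the subtlety $\ell\in\mathcal{L}$ is a nice touch that the paper leaves implicit.
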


 \begin{proof}
 Since ${\mathcal W}_{nondeg}\subset {\mathcal W}$, we only need to prove that
 if ${\mathcal W}\cap{\mathcal B}\neq \emptyset$, then ${\mathcal W}_{nondeg}\cap{\mathcal B}\neq \emptyset$.
 Suppose $(\kappa^*,c^*)\in {\mathcal W}\cap{\mathcal B}$. By the definition of ${\mathcal W}$ (see \eqref{eq:witness}), for $\kappa^*$, $G$ has $cap_{pos}(G)$ positive steady states in ${\mathcal P}_{c^*}$. If $(\kappa^*,c^*)\not\in {\mathcal W}_{nondeg}$, then
  by the proof of Theorem \ref{thm:nc}, there exist an index $\ell\in \{1, \ldots, s\}$ and a positive number $\delta$ such that for any $\delta'\in (0,\delta)$, for  $\kappa=\left(\kappa^*_1,\ldots,\kappa^*_{\ell-1}, \kappa^*_{\ell}+ \delta', \kappa^*_{\ell+1},\ldots,\kappa^*_m\right)$ or  $\kappa=\left(\kappa^*_1,\ldots,\kappa^*_{\ell-1}, \kappa^*_{\ell}- \delta', \kappa^*_{\ell+1},\ldots,\kappa^*_m\right)$, $G$
 has $cap_{pos}(G)$ nondegenerate positive steady states in ${\mathcal P}_{c^*}$. Notice that the left-hand side of the inequality in \eqref{eq:bwd} is continuous with respect to $\kappa^*$. Thus, there exists $\delta''\in (0,\delta)$ such that
 $(\kappa^{**}, c^*)\in {\mathcal B}$, where
 $\kappa^{**}:=\left(\kappa^*_1,\ldots,\kappa^*_{\ell-1}, \kappa^*_{\ell}\pm \delta'', \kappa^*_{\ell+1},\ldots,\kappa^*_m\right)$. Therefore, we have ${\mathcal W}_{nondeg}\cap{\mathcal B}\neq \emptyset$.
 \end{proof}

\section{Multistability}\label{sec:mstab}
The goal of this section is to prove Theorem \ref{thm:bdm} (the main result), Corollary \ref{cry:bdm}, and Corollary \ref{cry:bd}.
 From the proof presented later, we will see that the part (a) of Theorem \ref{thm:bdm} follows from the results in the previous two sections. So, the main task is to prove the part (b).
 The most crucial step is to show $q(\kappa^*;a)\neq 0$ for some $(\kappa^*, c^*)$ (see Lemma \ref{lm:nobd}), where the polynomial $q$ is defined in \eqref{eq:q}, and the number $a$ is the left endpoint of the interval $I$ defined in \eqref{eq:tinterval}. After this step, we can determine the number of stable positive steady states by the sign of $q(\kappa^*;a)$ (see Lemma \ref{lm:bdm} and Corollary \ref{cry:lmbdm}). From the proof, we will see
 the left-hand side of the inequality in \eqref{eq:bwd} is an explicit expression of $q(\kappa^*;a)$ in terms of $(\kappa^*, c^*)$. We first prepare a list of lemmas.
\begin{lemma}\label{lm:lmtinterval}
Given a network $G$,
   suppose the stoichiometric subspace of $G$ is one-dimensional.
    For a total-constant vector $c^*$, let $I$ be the open interval defined in \eqref{eq:tinterval}, and
  let  $q(\kappa; x_1)$ be the polynomial defined in \eqref{eq:q}.
   If  for a rate-constant vector $\kappa^*$,
   $q(\kappa^*;x_1)=0$ has  $N$ different solutions in $I$, then
   there exists another total-constant vector $\tilde c^*$ such that
   the equation  $\tilde q(\kappa^*;x_1)=0$ corresponding to $\tilde c^*$ has
   at least $N$ different solutions in the interval $\cap_{j=1}^s \tilde I_j$, where
   $\tilde I_j$ is the interval defined in \eqref{eq:iinterval} for the total-constant vector $\tilde c^*$.
\end{lemma}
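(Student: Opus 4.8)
The plan is to produce $\tilde c^*$ by relocating only the intervals $\tilde I_j$ attached to indices $j\notin\mathcal{H}$, while leaving untouched the total-constants indexed by $\mathcal{H}$ and by the indices with $A_j=0$; the whole point is that such relocations can be arranged so that they do not disturb the polynomial $q$. First I would record which indices can possibly make $\bigcap_{j=1}^s I_j$ strictly smaller than $I=\bigcap_{k\in\mathcal{H}}I_k$. By Remark~\ref{rmk:end1} we have $I\subset I_1=(0,+\infty)$, so every index $j$ with $A_j=0$ contributes $I_j=(0,+\infty)\supset I$ and is harmless, while for $k\in\mathcal{H}$ the interval $I_k$ is already one of the sets defining $I$. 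Hence the only obstructing indices are those $j\notin\mathcal{H}$ with $A_j\neq 0$, namely the members of the equivalence classes $[k]$ with $k\notin\mathcal{H}$ (together with possible opposite-sign partners sitting inside an $\mathcal{H}$-class, which I treat separately below).

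The key observation is that a class $[k]$ with $k\notin\mathcal{H}$ is invisible to $q$: by the derivation of \eqref{eq:q} we have $\gamma_{kj}=0$ for every $j$ when $k\notin\mathcal{H}$, so the factor $Y_k(x_1)^{\gamma_{kj}}\equiv 1$ and the class contributes nothing to $q$ beyond the constants already carried by $\mathcal{C}_j$. Therefore, for each such class I would rescale the total-constants of all of its members simultaneously, keeping their common ratio $B_i/A_i$ equal (so the class stays intact) but moving the common boundary point $-B_i/A_i$ far in the appropriate direction: writing the solutions as $x_1^{(1)}<\cdots<x_1^{(N)}$, I push $-B_i/A_i$ below $x_1^{(1)}$ when $A_i>0$ and above $x_1^{(N)}$ when $A_i<0$, so that the resulting $\tilde I_i$ contains $\{x_1^{(1)},\dots,x_1^{(N)}\}$ for every member $i$. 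Choosing the new common ratios generic (distinct from each other and from the ratios of the $\mathcal{H}$-classes) preserves the partition into equivalence classes, hence preserves $\mathcal{H}$ and all exponents $\gamma_{kj}$; since the total-constants indexed by $\mathcal{H}$ and those with $A_i=0$ are unchanged, the polynomial is unchanged, $\tilde q=q$. Consequently $\tilde q=q$ still vanishes at the same $x_1^{(1)},\dots,x_1^{(N)}$, and these now lie in $\bigcap_{j=1}^s\tilde I_j$, because every previously obstructing interval has been enlarged to contain them while the $\mathcal{H}$-intervals and the $A_i=0$ intervals contained them all along. This already settles the claim whenever every obstructing class is sign-homogeneous.

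The hard part will be a class containing indices $i$ with $A_i$ of both signs: there the intervals $\tilde I_i=(-\tilde B_i/A_i,+\infty)$ and $\tilde I_{i'}=(0,-\tilde B_{i'}/A_{i'})$ are oppositely oriented about the common boundary, so no single rigid shift of the class can enlarge both to cover the solution set, and splitting the class (assigning its members distinct ratios) alters the exponents $\gamma$ and hence changes $q$ itself. To handle this I would split the offending class and then argue that the perturbed polynomial still carries $N$ distinct roots inside the enlarged full intersection, by a continuity argument in the spirit of Lemma~\ref{lm:even} and Lemma~\ref{lm:odd}: deform the offending total-constants along a path, track the (possibly multiple) solutions, and show that the number of distinct roots lying in $\bigcap_{j=1}^s\tilde I_j$ never drops below $N$, because no root can escape through a moving endpoint before that intersection has opened up enough to receive it. Verifying that the forced change in $q$ does not destroy any of the $N$ roots during this deformation is the step I expect to demand the most care.
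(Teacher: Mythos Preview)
Your core strategy---leave the $\mathcal{H}$-classes and the $A_j=0$ indices alone, and relocate only the remaining total-constants so that the associated intervals swallow all $N$ solutions---is exactly the paper's. The paper makes this explicit: pick $\rho>B_k/A_k$ for every $k\in\mathcal H$, and for each index $j$ lying in a non-$\mathcal H$ class set $\tilde B_j/A_j=\rho$ when $A_j>0$ and $\tilde B_j/A_j=-(\max_\ell x_1^{(\ell)}+\rho)$ when $A_j<0$; one then checks $\tilde{\mathcal H}=\mathcal H$ and $\tilde q=q$, and that every $x_1^{(\ell)}$ lands in $\bigcap_j\tilde I_j$.

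Where you diverge is your third paragraph. You insist on moving each non-$\mathcal H$ class rigidly (preserving the common ratio $B_i/A_i$), find that a mixed-sign class cannot be shifted rigidly in both directions at once, and propose a deformation argument to salvage the situation. The rigidity constraint is unnecessary, and your stated reason for imposing it---``splitting the class alters the exponents $\gamma$ and hence changes $q$ itself''---is the misstep. Look at the last line of \eqref{eq:q}: $q$ depends only on the constants $\mathcal C_j$ (built from the $|A_i|$ and from $B_i$ with $A_i=0$, none of which you touch) and on the factors $Y_k^{\gamma_{kj}}$ with $k\in\mathcal H$. The $\mathcal H$-classes, and hence their $\gamma_{kj}$, are preserved as long as the new ratios are chosen distinct from the $\mathcal H$-class ratios; how the non-$\mathcal H$ indices are grouped among themselves is invisible to $q$. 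So the paper simply treats each non-$\mathcal H$ index individually by the sign of its $A_j$, and the continuity/deformation machinery is not needed.
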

\begin{proof}
Assume that
$q(\kappa^*;x_1)=0$ has  $N$ different solutions $x_1^{(1)},\ldots, x_1^{(N)}$ in $I$ for some
rate-constant vector $\kappa^*$.
For any $j\in \{1, \ldots, s\}$, let $A_j$ and $B_j$ be the numbers defined in \eqref{eq:a} and \eqref{eq:b} for $c^*$.
Let ${\mathcal H}$ be the set of indices defined in \eqref{eq:nindexi}
for  $c^*$.
Since ${\mathcal H}$ is finite, we can choose a positive number $\rho$ such that for any $k\in {\mathcal H}$, $\rho > \frac{B_k}{A_k}$.
For any $j\in \{2, \ldots, s\}$,
 suppose
$j\in [k]_{c^*}$, where $k\in \{1, \ldots, r\}$ (recall the definition of $[k]_{c^*}$ in \eqref{eq:iec},
and recall that $r$ denotes the number of equivalence classes).
 We define the $(j-1)$-coordinate of a new total-constant vector $\tilde c^*$:
\begin{align}\label{eq:star}
\tilde c^*_{j-1}\;:=\;
\begin{cases}
c^*_{j-1}, &\text{if}\; k\in {\mathcal H}, \;\text{or}\; A_j =0\\
-\rho(\beta_{j1}-\alpha_{j1}), & \text{if}\; k\not\in {\mathcal H}, \;\text{and}\; A_j >0\\
(\beta_{j1}-\alpha_{j1})(\max  \limits_{1\leq i\leq (N+1)} \{x_1^{(i)}\} +\rho), & \text{if}\; k\not\in {\mathcal H},\;\text{and}\;  A_j<0. 
\end{cases}
\end{align}

Now, for  $\tilde c^*$, let $\tilde {\mathcal H}$ be the set of indices defined in \eqref{eq:nindexi}.
By \eqref{eq:star}, it is easy to verify that
\begin{itemize}
\item[(I)] $\tilde {\mathcal H}={\mathcal H}$, and
\item[(II)] $\tilde q(\kappa^*;x_1)=q(\kappa^*;x_1)$.
\end{itemize}
 For any $k\in \{1, \ldots, s\}$,
So, by (II), we know
$x_1^{(1)}, \ldots, x_1^{(N)}$ are also solutions to $\tilde q(\kappa^*;x_1)=0$.

 Below, we show that for every $i\in \{1, \ldots, N\}$, we have
$x^{(i)}_1\in \bigcap\limits_{j=1}^s\tilde I_{j}$.
In fact, note that $\{1, \ldots, s\}=(\bigcup\limits_{k\in \tilde{\mathcal H}}[k]_{\tilde c^*})\cup (\bigcup\limits_{k\not\in \tilde {\mathcal H}}[k]_{\tilde c^*})$.
We know
$x^{(i)}_1\in I = \bigcap\limits_{k\in {\mathcal H}}I_{k}$.
 For every $k\in {\mathcal H}$, by \eqref{eq:star}, we have $I_{k}=\tilde I_{k}$.
 So, by (I), for every $k\in \tilde {\mathcal H}$,  $x^{(i)}_1 \in \tilde I_{k}$,
 and hence, for any $j\in [k]_{\tilde c^*}$, $x^{(i)}_1 \in \tilde I_{j}$
since by \eqref{eq:iec} and \eqref{eq:iinterval}, $\tilde I_k = \tilde I_j$.
On the other hand, for any $k\not\in \tilde {\mathcal H}$, and for every $j\in [k]_{\tilde c^*}$,
let $\tilde A_j$ and $\tilde B_j$ be the numbers defined in \eqref{eq:a} and \eqref{eq:b} for $\tilde c^*$. By \eqref{eq:a}, we see that $A_j=\tilde A_j$.
By \eqref{eq:star},
if $A_j>0$, then
we have $x^{(i)}_1>-\frac{\tilde B_j}{A_j}=-\rho$ (since $x^{(i)}_1>0$), and
if $A_j<0$, then
we have $x^{(i)}_1<-\frac{\tilde B_j}{A_j}=\max  \limits_{1\leq i\leq N} \{x_1^{(i)}\} +\rho$.
Notice that the left endpoint of $I$ is no less than $0$ (recall Remark \ref{rmk:end1}). So, we have
$x^{(i)}_1>0$.
So, by \eqref{eq:iinterval},  we always have $x^{(i)}_1\in \tilde I_{j}$. Therefore, we know that $\tilde q(\kappa^*;x_1)=0$ has at least $N$ different solutions in the interval $\bigcap\limits_{j=1}^s\tilde I_{j}$.
\end{proof}


\begin{lemma}\label{lm:tinterval}
Given a network $G$,
   suppose the stoichiometric subspace of $G$ is one-dimensional.
    For a total-constant vector $c^*$, let $I$ be the open interval defined in \eqref{eq:tinterval}, and
  let  $q(\kappa; x_1)$ be the polynomial defined in \eqref{eq:q}.
   If $cap_{pos}(G)=N$ $(0\le N<+\infty)$, then for any rate-constant vector $\kappa^*$,
   $q(\kappa^*;x_1)=0$ has at most $N$ different solutions in $I$.
\end{lemma}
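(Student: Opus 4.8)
The plan is to argue by contradiction, turning an excess of roots in $I$ into an excess of genuine positive steady states by means of the two preceding lemmas. Fix an arbitrary rate-constant vector $\kappa^*$, and let $M$ denote the number of distinct real solutions of $q(\kappa^*;x_1)=0$ lying in $I$. If $M=0$ there is nothing to prove, since $N\geq 0$; so I would assume $M\geq 1$ and suppose, toward a contradiction, that $M>N$.

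First I would apply Lemma \ref{lm:lmtinterval} to the rate-constant vector $\kappa^*$ and the total-constant vector $c^*$. Since $q(\kappa^*;x_1)=0$ has $M$ distinct solutions in $I$, the lemma yields a new total-constant vector $\tilde c^*$ for which the corresponding univariate polynomial $\tilde q(\kappa^*;x_1)$ has at least $M$ distinct solutions in the full intersection $\bigcap_{j=1}^s\tilde I_j$, where each $\tilde I_j$ is the interval from \eqref{eq:iinterval} associated to $\tilde c^*$. The role of this step is precisely that $I$ in \eqref{eq:tinterval} is only the intersection over the indices in ${\mathcal H}$, whereas the criterion of Lemma \ref{lm:qroot} for positive steady states demands membership in the full intersection $\bigcap_{j=1}^s\tilde I_j$; Lemma \ref{lm:lmtinterval} relocates the roots into this smaller set while keeping $\kappa^*$ fixed and leaving the polynomial itself unchanged.

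Next I would invoke the ``$\Leftarrow$'' direction of Lemma \ref{lm:qroot}, applied now with the total-constant vector $\tilde c^*$. Each of the $M$ distinct solutions $x_1^*\in\bigcap_{j=1}^s\tilde I_j$ of $\tilde q(\kappa^*;x_1)=0$ yields a positive steady state $x^*=(x_1^*,\tilde A_2x_1^*+\tilde B_2,\ldots,\tilde A_sx_1^*+\tilde B_s)$ in the stoichiometric compatibility class ${\mathcal P}_{\tilde c^*}$, where $\tilde A_i,\tilde B_i$ are the numbers from \eqref{eq:a}--\eqref{eq:b} computed for $\tilde c^*$. Because the first coordinate $x_1^*$ determines the entire vector $x^*$ through this affine parametrization, distinct roots give distinct positive steady states. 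Consequently, for the rate-constant vector $\kappa^*$ and the total-constant vector $\tilde c^*$, the network $G$ has at least $M$ positive steady states in ${\mathcal P}_{\tilde c^*}$, so $cap_{pos}(G)\geq M>N$, contradicting $cap_{pos}(G)=N$. Hence $M\leq N$, which is exactly the assertion. I do not expect a serious obstacle here: the entire substance is carried by Lemma \ref{lm:lmtinterval}, and the only point requiring a line of care is the distinctness of the resulting steady states, which is immediate from the affine relation $x_i=\tilde A_ix_1+\tilde B_i$.
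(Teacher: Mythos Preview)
Your proposal is correct and follows essentially the same approach as the paper: argue by contradiction, apply Lemma~\ref{lm:lmtinterval} to pass from $I$ to a full intersection $\bigcap_{j=1}^s\tilde I_j$ for some new total-constant vector $\tilde c^*$, and then use Lemma~\ref{lm:qroot} to produce more than $N$ positive steady states in ${\mathcal P}_{\tilde c^*}$, contradicting $cap_{pos}(G)=N$. Your added remark on distinctness of the resulting steady states via the affine parametrization $x_i=\tilde A_i x_1+\tilde B_i$ is a welcome clarification that the paper leaves implicit.
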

\begin{proof}

If $q(\kappa^*;x_1)=0$ has at least $N+1$ different  solutions in $I$, then by Lemma \ref{lm:lmtinterval},
   there exists a total-constant vector $\tilde c^*$ such that
   the equation  $\tilde q(\kappa^*;x_1)=0$ corresponding to $\tilde c^*$ has
   at least $N+1$ different solutions in the interval $\cap_{j=1}^s \tilde I_j$, where
   $\tilde I_j$ is the interval defined in \eqref{eq:iinterval} for the total-constant vector $\tilde c^*$.
Thus, by the proof of Lemma \ref{lm:qroot}, for $\kappa = \kappa^*$,
the network $G$ has at least $N+1$ positive steady states in ${\mathcal P}_{\tilde c^*}$, 
which is a contradiction to the hypothesis that
$cap_{pos}(G)=N$.
\end{proof}

The following definition for a degenerate steady state with a multiplicity is motivated by Lemma \ref{lm:nonde}, Lemma \ref{lm:qroot} and Lemma \ref{lm:jactildeg}.

\begin{definition}\label{def:multi}
   \textcolor{black}{
  Given a network $G$ \eqref{eq:network}, suppose the stoichiometric subspace of $G$ is one-dimensional. For a total-constant vector $c^*$, 
  let $q(\kappa; x_1)$ be the polynomial defined in \eqref{eq:q}. Suppose for a rate-constant vector $\kappa^*$,  the network $G$ has a positive steady state $x^*$  in ${\mathcal P}_{c^*}$.
   If $x^*_1$ is a multiplicity-$N$ ($N\geq 2$) solution of $q(\kappa^*; x_1)=0$, then
    we say $x^*$ is \defword{degenerate with multiplicity-$N$}.}
\end{definition}

\begin{lemma}\label{lm:nobd}
Given a network $G$, suppose the stoichiometric subspace of $G$ is one-dimensional.
  For a total-constant vector $c^*$, let $I:=(a,M)$ be the open interval defined in \eqref{eq:tinterval}, and
  let  $q(\kappa; x_1)$ be the polynomial defined in \eqref{eq:q}.
   If $cap_{pos}(G)=N$ $(0\leq N<+\infty)$, and if
 for a rate-constant vector $\kappa^*$, $G$ has $N$ \textcolor{black}{positive steady states} in the stoichiometric compatibility class ${\mathcal P}_{c^*}$, \textcolor{black}{and any degenerate positive steady state has an odd multiplicity,} then 
 we have
 $q(\kappa^*;a)\neq 0$.
\end{lemma}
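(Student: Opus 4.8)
The plan is to first identify $q(\kappa^*;a)$ at the left endpoint $a=-B_{\tau}/A_{\tau}$ of $I$, and then to run a perturbation argument showing that if $a$ were a root we could manufacture an extra root inside $I$, contradicting the cap on positive steady states.

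First I would evaluate $q(\kappa^*;x_1)$ at $x_1=a$. Since $a=-B_{\tau}/A_{\tau}$, we have $Y_{\tau}(a)=\frac{1}{|A_{\tau}|}(A_{\tau}a+B_{\tau})=0$ by \eqref{eq:y}. Writing $q$ in the form \eqref{eq:q}, every summand with $\gamma_{\tau j}>0$ carries the factor $Y_{\tau}(a)^{\gamma_{\tau j}}=0$, so only the indices $j$ with $\gamma_{\tau j}=0$ survive, i.e.\ $j\in{\mathcal L}$ (recall \eqref{eq:L}). Using $\lambda_j(\beta_{11}-\alpha_{11})=\beta_{1j}-\alpha_{1j}$ from \eqref{eq:scalar}, the definition of ${\mathcal C}_j$ in \eqref{eq:mathcalc}, and $Y_k(a)=\frac{B_k}{|A_k|}-\frac{A_k}{|A_k|}\frac{B_{\tau}}{A_{\tau}}$, one checks that $q(\kappa^*;a)$ equals exactly the left-hand side of the inequality defining ${\mathcal B}$ in \eqref{eq:bwd}; this identification is what makes ${\mathcal B}$ meaningful and will be reused for Theorem \ref{thm:bdm}(b). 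For the present claim it suffices to prove $q(\kappa^*;a)\neq 0$.

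Next I would fix the count of roots in $I$. By Lemma \ref{lm:qroot} the first coordinates $x_1^{(1)},\ldots,x_1^{(N)}$ of the $N$ positive steady states are $N$ distinct roots of $q(\kappa^*;x_1)=0$ in $\bigcap_{i}I_i\subset I$, and by Lemma \ref{lm:tinterval} there are at most $cap_{pos}(G)=N$ roots in $I$; hence these are exactly the roots in $I$, and $a\notin I$ contributes none. By hypothesis every degenerate positive steady state has odd multiplicity, so each $x_1^{(i)}$ is either simple or of odd multiplicity. Now suppose for contradiction that $q(\kappa^*;a)=0$. By Lemma \ref{lm:well-defined}(3) the function $\phi(\hat\kappa;x_1)$ is well-defined at $a$, so Lemma \ref{lm:even} and Lemma \ref{lm:odd} apply at $a$ as well as at each interior root. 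The crucial observation is that for a simple or odd-multiplicity root, Lemma \ref{lm:odd} produces a nearby simple root for \emph{both} signs of the perturbation $\kappa_{\ell}^*\mapsto\kappa_{\ell}^*\pm\delta'$ (one sign pushes the root slightly right, the other slightly left, but in either case a root survives). Thus any sufficiently small perturbation of $\kappa_{\ell}$ retains $N$ distinct roots near $x_1^{(1)},\ldots,x_1^{(N)}$, which I keep inside $I$ by choosing $\epsilon$ smaller than the gaps between consecutive roots and than $x_1^{(1)}-a$. At the endpoint $a$ I then choose the sign of the perturbation yielding an additional root strictly to the right of $a$: if $a$ has even multiplicity this is the direction of Lemma \ref{lm:even} that splits $a$ into two simple roots (the larger lying in $(a,a+\epsilon)\subset I$), and if $a$ has odd multiplicity (including simple) it is the direction of Lemma \ref{lm:odd} giving a simple root above $a$. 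Since the interior roots survive for either sign, this single choice of perturbation direction produces at least $N+1$ distinct roots of $q(\kappa;x_1)=0$ in $I$ for a positive rate-constant vector $\kappa$, contradicting Lemma \ref{lm:tinterval}. Therefore $q(\kappa^*;a)\neq 0$.

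The main obstacle is the perturbation bookkeeping in the last paragraph: one must verify that a single direction of the one-parameter perturbation of $\kappa_{\ell}$ \emph{simultaneously} preserves all $N$ interior roots and creates a new root inside $I$ near $a$. The resolution rests on the observation that simple and odd-multiplicity roots are robust under perturbation in either direction, which decouples the choice of direction required at $a$ from the behaviour of the interior roots; together with choosing $\epsilon$ small enough to keep all the perturbed roots distinct and inside $I$, the resulting count $N+1$ contradicts the bound of Lemma \ref{lm:tinterval}.
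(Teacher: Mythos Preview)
Your argument is correct and follows essentially the same route as the paper: assume $q(\kappa^*;a)=0$, use Lemma \ref{lm:well-defined} to ensure $\phi$ is well-defined at $a$ and at each interior root, then invoke Lemma \ref{lm:odd} (which applies in both perturbation directions at odd-multiplicity roots) to keep all $N$ interior roots while Lemma \ref{lm:even}(1) or \ref{lm:odd}(1) supplies an extra root just right of $a$, contradicting Lemma \ref{lm:tinterval}. The opening paragraph, where you explicitly compute $q(\kappa^*;a)$ and match it with the expression in \eqref{eq:bwd}, is not needed for the present lemma (the paper defers this computation to the proof of Lemma \ref{lm:bdm}), but it is correct and does no harm.
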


\begin{proof}
Suppose for  the total-constant vector $c^*$ and for the rate-constant vector $\kappa^*$, the $N$ positive steady states are $x^{(1)},\ldots,x^{(N)}$ such that
 $x_1^{(1)}<\ldots<x_1^{(N)}$. By the proof of Lemma \ref{lm:qroot}, for any $i\in  \{1,\ldots,N\}$, $x_1^{(i)}$ is a solution of $q(\kappa^*; x_1)=0$ in $\bigcap \limits_{j=1}^sI_j$, where $I_j$ is defined in \eqref{eq:iinterval}. By \eqref{eq:tinterval},  $\bigcap \limits_{j=1}^sI_j\subset I$. Below, we prove the conclusion by contradiction.
 Assume that $q(\kappa^*;a)=0$. 
 Let
 $$\epsilon\;=\;\frac{1}{2}\min\;\{\;|a-x_1^{(1)}|,\;\min\limits_{1\leq i\leq N-1}\{|x_1^{(i)}-x_1^{(i+1)}|\},\;|x_1^{(N)}-M|\}\;.$$
 For the total-constant vector $c^*$, let $\ell$ be the index defined in \eqref{eq:ell}, and let $\phi(\hat \kappa; x_1)$ be the function defined in \eqref{eq:phi}.
 By Lemma \ref{lm:well-defined} (1) and (3), $\phi(\hat \kappa; x_1)$ is well-defined at
 $x^{(i)}_1$ for any $i\in \{1, \ldots, N\}$, and at $a$.
By Lemma \ref{lm:jactildeg} and Lemma \ref{lm:nonde}, $x^{(i)}_1$ is a simple, or \textcolor{black}{a multiplicity-$(2K+1)$ solution} of $q(\kappa^*;x_1)=0$ (here, $K\in {\mathbb Z}_{> 0}$). So,
 by Lemma \ref{lm:odd}, for every $i\in \{1, \ldots, N\}$, for $\epsilon>0$, there exists $\delta_i>0$ such that for all $\delta' \in (0, \delta_i)$,
for
 $\kappa= (\kappa_1^*,\ldots, \kappa^*_{\ell-1}, \kappa_{\ell}^*\pm\delta', \kappa_{\ell+1}^*, \ldots, \kappa_m^*)$, the equation $q(\kappa; x_1)=0$ has a simple real
solution $y_1^{(i)}$, for which $|y_1^{(i)}-x_1^{(i)}|<\epsilon$.

 Either if $a$ is a multiplicity-$2K$ ($K\in {\mathbb Z}_{>0}$) solution of $q(\kappa^*;x_1)=0$, or if  $a$ is a simple, or a multiplicity-$(2K+1)$ ($K\in {\mathbb Z}_{>0}$) solution of  $q(\kappa^*;x_1)=0$, then by
 Lemma \ref{lm:even} (1) or Lemma \ref{lm:odd} (1), for $\epsilon>0$,
 there exists $\delta_0 \in {\mathbb R}$ such that for all $\delta' \in (0,|\delta_0|)$, for $\kappa= (\kappa_1^*,\ldots, \kappa^*_{\ell-1},\kappa_{\ell}^*+{\rm sign}(\delta_0)\delta', \kappa_{\ell+1}^*, \ldots, \kappa_m^*)$, the equation $q(\kappa; x_1)=0$ has a simple  solution $\tilde a$ such that $0<\tilde a-a<\epsilon$ (i.e., $a<\tilde a<a+\epsilon$).
 Let $\delta=\min\limits_{0\leq i\leq N}\{|\delta_i|\}$.
 We choose any $\delta'\in (0, \delta)$.
 Then, for
 $\kappa= (\kappa_1^*,\ldots, \kappa^*_{\ell-1},\kappa_{\ell}^*+{\rm sign}(\delta_0)\delta', \kappa_{\ell+1}^*, \ldots, \kappa_m^*)$,
 $q(\kappa;x_1)=0$ has at least $N+1$ different solutions ($y_1^{(1)}, \ldots, y_1^{(N)}$ and $\tilde a$)
 in the interval
 $I=(a,M)$. \textcolor{black}{This cannot happen by Lemma \ref{lm:tinterval}}.
 Therefore,
 we have $q(\kappa^*;  a)\neq 0$.
\end{proof}

\begin{remark}\label{rmk:nobd}
If the right endpoint of the interval $I=(a, M)$ stated in Lemma \ref{lm:nobd} is a finite number, then we can also add $q(\kappa^*; M)\neq 0$ into the conclusion of Lemma \ref{lm:nobd}.
 By Lemma \ref{lm:uni} and Lemma \ref{lm:signend},  if all real solutions of $q(\kappa^*;x_1)=0$ are simple, which means all positive steady states are nondegenerate, then
the sign of $q(\kappa^*;a)$ gives the Brouwer degree (i.e., the summation of the signs of derivatives at all solutions $\sum\limits_{i=1}^N {\tt sign}(\frac{d q}{d x_1}(\kappa^*; x^{(i)}_1))$, see \cite[Theorem 1]{CFMW}) of the univariate function $q(\kappa^*; x_1)$
over a bounded subset of  $(a, M)$.
\end{remark}

\begin{lemma}\label{lm:x1-nondeg}
 \textcolor{black}{
 Given a network $G$ \eqref{eq:network} with a one-dimensional stoichiometric subspace, suppose $cap_{pos}(G)=2N+1$ $(0\leq N<+\infty)$.
 If
 for a rate-constant vector $\kappa^*$ and a total-constant vector $c^*$, $G$ has exactly $2N+1$ positive steady states $x^{(1)}, \ldots, x^{(2N+1)}$ $(x_1^{(1)}<\ldots<x_1^{(2N+1)})$ in the stoichiometric compatibility class ${\mathcal P}_{c^*}$,
 and if $N+1$ of these positive steady states are stable, then
\begin{itemize}
\item[(a)]$x^{(1)}, x^{(3)},\ldots,$ $x^{(2N+1)}$ are stable, and
\item[(b)]any degenerate positive steady state has an odd multiplicity.
\end{itemize}
 }
\end{lemma}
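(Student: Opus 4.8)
The plan is to prove (a) by a short combinatorial argument about independent sets of indices, and then to prove (b) by tracking the sign of $q(\kappa^*;x_1)$ as $x_1$ crosses its roots. Throughout I write $q(x_1):=q(\kappa^*;x_1)$ and use $I=(a,M)$ from \eqref{eq:tinterval}.

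For part (a), I would start from Corollary \ref{cry:sign-stable}(a), which says that no two consecutive steady states $x^{(i)},x^{(i+1)}$ can both be stable. Hence the indices of the stable steady states form a subset of $\{1,\dots,2N+1\}$ of cardinality $N+1$ containing no two consecutive integers. If $i_1<\cdots<i_{N+1}$ denote these indices, then $i_{k+1}\ge i_k+2$ for every $k$, so $2N+1\ge i_{N+1}\ge i_1+2N\ge 1+2N=2N+1$. Every inequality must therefore be an equality, which forces $i_1=1$ and $i_k=2k-1$; that is, the stable steady states are exactly $x^{(1)},x^{(3)},\dots,x^{(2N+1)}$, as claimed.

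For part (b), I would first identify the roots of $q$ in $I$. By Lemma \ref{lm:qroot} each $x_1^{(i)}$ is a root of $q$ lying in $I$, and by Lemma \ref{lm:tinterval} the equation $q(x_1)=0$ has at most $cap_{pos}(G)=2N+1$ solutions in $I$; hence $x_1^{(1)}<\cdots<x_1^{(2N+1)}$ are \emph{precisely} the roots of $q$ in $I$, with no spurious extra roots. Since $q$ is not identically zero (otherwise $G$ would admit infinitely many positive steady states, contradicting finiteness of $cap_{pos}(G)$), it has a constant nonzero sign on each open subinterval between consecutive roots. Next, because each odd-indexed steady state is stable, it is nondegenerate, so by Lemma \ref{lm:nonde} together with Lemma \ref{lm:jactildeg} its first coordinate is a \emph{simple} root of $q$ with $\pd{q}{x_1}(\kappa^*;x_1^{(i)})<0$.

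I would then denote by $\sigma_0,\sigma_1,\dots,\sigma_{2N+1}\in\{+,-\}$ the constant signs of $q$ on the subintervals $(a,x_1^{(1)}),(x_1^{(1)},x_1^{(2)}),\dots,(x_1^{(2N+1)},M)$, respectively. At each odd index $i=2k+1$ the root is simple with negative derivative, so $q$ is decreasing through it, which forces $\sigma_{2k}=+$ and $\sigma_{2k+1}=-$. Ranging $k$ over $0,\dots,N$ determines every $\sigma_j$ and shows the signs strictly alternate ($\sigma_j=+$ for $j$ even and $\sigma_j=-$ for $j$ odd). Consequently, at each even index $i=2k$ with $1\le k\le N$ we have $\sigma_{2k-1}=-$ and $\sigma_{2k}=+$, so $q$ changes sign at $x_1^{(2k)}$; a univariate polynomial changes sign at a root if and only if that root has odd multiplicity. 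Hence every root of $q$ in $I$, and in particular every degenerate positive steady state (which must be even-indexed, since the odd-indexed ones are nondegenerate), has odd multiplicity in the sense of Definition \ref{def:multi}, establishing (b).

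The combinatorial uniqueness step in (a) is elementary. The main obstacle is the first half of part (b): correctly pinning down that the roots of $q$ in $I$ are exactly the $2N+1$ first coordinates with no additional roots — this is precisely where Lemma \ref{lm:tinterval} is indispensable — and then propagating the sign of $q$ from the simple, decreasing odd-indexed roots to conclude a genuine sign change, and therefore odd multiplicity, at each even-indexed root.
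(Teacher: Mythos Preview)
Your argument is correct. Part (a) is exactly the paper's proof, with the combinatorial pigeonhole step spelled out where the paper simply cites Corollary~\ref{cry:sign-stable}(a).

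For part (b) you take a slightly different route. The paper works in the smaller interval $\bigcap_{i=1}^s I_i$ (where the bijection of Lemma~\ref{lm:qroot} already guarantees there are no extra roots), observes that stability at the odd indices gives $\pd{q}{x_1}(\kappa^*;x_1^{(2i-1)})\pd{q}{x_1}(\kappa^*;x_1^{(2i+1)})>0$, and then appeals to the contrapositive of Lemma~\ref{lm:uni} to force any root strictly between---namely $x_1^{(2i)}$---to have odd multiplicity. You instead work in the larger interval $I$, invoke Lemma~\ref{lm:tinterval} to rule out extra roots there, and replace the use of Lemma~\ref{lm:uni} by an explicit sign-tracking of $q$ across its roots. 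Your version is more elementary in that it bypasses Lemma~\ref{lm:uni} entirely; the paper's version is shorter because that lemma is already on hand. Either interval works for this lemma, since the only roots that matter lie between consecutive odd-indexed ones and hence inside $\bigcap_i I_i\subset I$.
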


\begin{proof}
 (a) 
 The conclusion follows from Corollary \ref{cry:sign-stable} (a).
 (b) \textcolor{black}{For the total-constant vector $c^*\in {\mathbb R}^{s-1}$,
let $q(\kappa;x_1)$ be the polynomial defined in \eqref{eq:q}.
By the proof of Lemma \ref{lm:qroot}, $x_1^{(1)}, \ldots, x_1^{(2N+1)}$ are real solutions of the equation $q(\kappa^*;x_1)=0$ in an open interval, and the equation has no other real solutions in this interval.
 So, for any $i\in \{1, \ldots, N\}$, by (a) and Lemma \ref{lm:jactildeg}, we have
 $$\pd{q(\kappa^*; x_1)}{x_1}(x^{(2i-1)}_1)\pd{q(\kappa^*; x_1)}{x_1}(x_1^{(2i+1)})>0.$$
 Thus, by Lemma \ref{lm:uni}, for any $i=1,\dots,N$, if $x^{(2i)}$ is degenerate, then it has an odd multiplicity. Then, the conclusion follows from (a).}
\end{proof}

\begin{lemma}\label{lm:bdm}
Given a network $G$ \eqref{eq:network} with a one-dimensional stoichiometric subspace, suppose $cap_{pos}(G)=2N+1$ $(0\leq N<+\infty)$.
 If
 for a rate-constant vector $\kappa^*$ \textcolor{black}{and a total-constant vector $c^*$}, $G$ has $2N+1$ positive steady states in the stoichiometric compatibility class ${\mathcal P}_{c^*}$, \textcolor{black}{and if $N+1$ of these positive steady states are stable, then}
 \begin{align}\label{eq:bwd'}
 \sum\limits_{j\in \mathcal L}(\beta_{1j}-\alpha_{1j})\kappa^*_{j}\prod\limits_{i\in {\mathcal J}}|A_i|^{\alpha_{ij}}\prod\limits_{i\in \{1, \ldots, s\}\backslash{\mathcal J}}B_i^{\alpha_{ij}} \prod \limits_{k\in {\mathcal H}, k\neq \tau}(\frac{B_k}{|A_k|}-\frac{A_k}{|A_k|}\frac{B_{\tau}}{A_{\tau}})^{\gamma_{kj}}>0.
 \end{align}
 where for the total constant-vector $c^*$, the notions $A_i$, $B_i$, $\gamma_{kj}$, ${\mathcal J}$, ${\mathcal H}$, $\tau$, and ${\mathcal L}$ are defined in \eqref{eq:a}--\eqref{eq:L}.
\end{lemma}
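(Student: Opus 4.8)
The plan is to reduce the claimed inequality \eqref{eq:bwd'} to the single statement $q(\kappa^*; a) > 0$, where $q$ is the polynomial in \eqref{eq:q} and $a = -\frac{B_\tau}{A_\tau}$ is the left endpoint of the interval $I$ in \eqref{eq:tinterval} (recall $\tau$ from \eqref{eq:setk}). First I would verify that the left-hand side of \eqref{eq:bwd'} is literally the value $q(\kappa^*; a)$. Indeed, by the choice of $\tau$ we have $Y_\tau(a) = \frac{1}{|A_\tau|}(A_\tau a + B_\tau) = 0$, so in the sum \eqref{eq:q} every term indexed by $j$ with $\gamma_{\tau j} > 0$ vanishes at $x_1 = a$; only the indices $j \in {\mathcal L}$ (those with $\gamma_{\tau j} = 0$, see \eqref{eq:L}) survive. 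For $k \in {\mathcal H} \setminus \{\tau\}$ one computes $Y_k(a) = \frac{B_k}{|A_k|} - \frac{A_k}{|A_k|}\frac{B_\tau}{A_\tau}$, and substituting the factorization ${\mathcal C}_j = \prod_{i \in {\mathcal J}}|A_i|^{\alpha_{ij}}\prod_{i \notin {\mathcal J}} B_i^{\alpha_{ij}}$ from \eqref{eq:mathcalc} together with the identity $(\beta_{11}-\alpha_{11})\lambda_j = \beta_{1j}-\alpha_{1j}$ (the first row of \eqref{eq:scalar}) turns $q(\kappa^*; a)$ exactly into the expression on the left of \eqref{eq:bwd'}. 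This evaluation is elementary but needs careful bookkeeping, and I regard it as the main technical obstacle.

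It then remains to prove $q(\kappa^*; a) > 0$. By Lemma \ref{lm:x1-nondeg}, the stability hypothesis forces the odd-indexed steady states $x^{(1)}, x^{(3)}, \ldots, x^{(2N+1)}$ to be stable and every degenerate positive steady state to have odd multiplicity. Because $x^{(1)}$ is stable it is in particular nondegenerate, so by Lemma \ref{lm:nonde} and Lemma \ref{lm:jactildeg} its first coordinate $x_1^{(1)}$ is a simple root of $q(\kappa^*; x_1)$ with $\pd{q}{x_1}(\kappa^*; x_1^{(1)}) < 0$.

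Next I would apply Lemma \ref{lm:signend} on the interval $I = (a, M)$. By the proof of Lemma \ref{lm:qroot} the first coordinates $x_1^{(1)} < \cdots < x_1^{(2N+1)}$ are roots of $q(\kappa^*; x_1)$ lying in $I$, and by Lemma \ref{lm:tinterval} (using $cap_{pos}(G) = 2N+1$) there are no further roots of $q(\kappa^*;x_1)$ in $I$; thus these are exactly the roots in $I$. Since $x_1^{(1)}$ is simple and every multiple root has odd multiplicity, and since $q(\kappa^*; a) \neq 0$ by Lemma \ref{lm:nobd} (whose odd-multiplicity hypothesis is supplied by Lemma \ref{lm:x1-nondeg}(b)), Lemma \ref{lm:signend} yields ${\tt sign}(q(\kappa^*; a)) = -{\tt sign}\!\left(\pd{q}{x_1}(\kappa^*; x_1^{(1)})\right)$. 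As the derivative is negative, $q(\kappa^*; a) > 0$, which is precisely \eqref{eq:bwd'}. The conceptual heart is this sign-pinning step, but it is driven entirely by the earlier lemmas, so the genuine work lies in the explicit identification of $q(\kappa^*;a)$ with the formula in \eqref{eq:bwd'}.
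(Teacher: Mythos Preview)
Your proposal is correct and follows essentially the same route as the paper: identify the left-hand side of \eqref{eq:bwd'} with $q(\kappa^*;a)$ via \eqref{eq:q}, \eqref{eq:mathcalc}, \eqref{eq:scalar}, and the definition of ${\mathcal L}$; invoke Lemma \ref{lm:x1-nondeg} to get that $x^{(1)}$ is stable and all degenerate steady states have odd multiplicity; then combine Lemma \ref{lm:nobd}, Lemma \ref{lm:signend}, and Lemma \ref{lm:jactildeg} to conclude $q(\kappa^*;a)>0$. Your explicit appeal to Lemma \ref{lm:tinterval} to rule out extra roots in $I$ is a welcome clarification that the paper leaves implicit.
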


\begin{proof}
 Suppose the positive steady states are $x^{(1)}, \ldots, x^{(2N+1)}$ such that
 $x_1^{(1)}<\ldots<x_1^{(2N+1)}$. Then by Lemma \ref{lm:x1-nondeg}, $x^{(1)}$ is stable and any degenerate steady state has an odd multiplicity. For the total-constant vector $c^*$, let $q(\kappa;x_1)$ be the polynomial defined in \eqref{eq:q}.
  By the proof of Lemma \ref{lm:qroot}, and by Lemma \ref{lm:jactildeg}, $x^{(1)}_1$ is a simple solution of $q(\kappa^*;x_1)=0$.
 Note that
   the left endpoint of the interval $I$ defined in \eqref{eq:tinterval} is $a=-\frac{B_{\tau}}{A_{\tau}}$.
 By Lemma \ref{lm:nobd},  we have $q(\kappa^*;a)\neq 0$.
   Thus, by Lemma \ref{lm:signend}, we have $${\tt sign}(\pd{q}{x_1}(\kappa^*;x_1^{(1)}))=-{\tt sign}(q(\kappa^*;a)).$$
By Lemma \ref{lm:uni}, for every $i\in \{2, \ldots, 2N+1\}$,
\begin{align}\label{eq:lmbdm}
{\tt sign}(\pd{q}{x_1}(\kappa^*;x_1^{(i)})){\tt sign}(\pd{q}{x_1}(\kappa^*;x_1^{(i-1)}))\leq 0.
\end{align}
So, by
Lemma \ref{lm:jactildeg}, if
there exist $N+1$ stable positive steady states,
then $q(\kappa^*;a)>0$.
Therefore, the condition \eqref{eq:bwd'} follows from the fact that
$$q(\kappa^*;a)\;=\;q(\kappa^*;-\frac{B_{\tau}}{A_{\tau}})\;=\; (\beta_{11}-\alpha_{11})\sum\limits_{j\in \mathcal L}\lambda_j\kappa^*_{j}{\mathcal C}_j \prod \limits_{k\in {\mathcal H}, k\neq \tau}(\frac{B_k}{|A_k|}-\frac{A_k}{|A_k|}\frac{B_{\tau}}{A_{\tau}})^{\gamma_{kj}},$$
and the equality $(\beta_{11}-\alpha_{11})\lambda_j =\beta_{1j}-\alpha_{1j}$ (see \eqref{eq:scalar}).
\end{proof}

\begin{corollary}\label{cry:lmbdm}
Given a network $G$ \eqref{eq:network} with a one-dimensional stoichiometric subspace, suppose $cap_{pos}(G)=2N+1$ $(0\leq N<+\infty)$.
 If
 for a rate-constant vector $\kappa^*$ \textcolor{black}{and a total-constant vector $c^*$}, $G$ has $2N+1$ nondegenerate positive steady states in  ${\mathcal P}_{c^*}$,  then \textcolor{black}{$N+1$ of these positive steady states are stable if the condition \eqref{eq:bwd'} holds, and $N$ of these positive steady states are stable if the condition \eqref{eq:bwd'} does not hold.}
\end{corollary}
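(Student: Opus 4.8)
The plan is to recognize this corollary as the sharp, two-sided refinement of Lemma \ref{lm:bdm}: whereas that lemma extracts \eqref{eq:bwd'} from the \emph{assumption} of $N+1$ stable states, here I use nondegeneracy to upgrade the weak sign-alternation of Corollary \ref{cry:sign-stable} to a \emph{strict} one, which then pins down the exact count from the sign of $q$ at the left endpoint. First I would order the steady states as $x^{(1)}, \ldots, x^{(2N+1)}$ with $x_1^{(1)} < \cdots < x_1^{(2N+1)}$. Since each is nondegenerate, Lemma \ref{lm:nonde} together with Lemma \ref{lm:jactildeg} shows that every $x_1^{(i)}$ is a \emph{simple} root of $q(\kappa^*; x_1)=0$, and by the proof of Lemma \ref{lm:qroot} these are precisely the roots of $q$ lying in $I=(a,M)$ of \eqref{eq:tinterval}. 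The nondegenerate case of Theorem \ref{thm:sign}, transported to $q$ via Lemma \ref{lm:jactildeg}, then yields the strict alternation ${\tt sign}(\pd{q}{x_1}(\kappa^*;x_1^{(i)})){\tt sign}(\pd{q}{x_1}(\kappa^*;x_1^{(i+1)}))<0$ for every $i$.

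The second step is to anchor this alternating pattern by the value $q(\kappa^*;a)$ at the left endpoint $a=-\frac{B_\tau}{A_\tau}$. Because all steady states are nondegenerate, they have multiplicity one, so in particular every degenerate steady state (of which there are none) trivially has odd multiplicity; hence Lemma \ref{lm:nobd}, applied with its steady-state count taken to be $2N+1$, gives $q(\kappa^*;a)\neq 0$. Lemma \ref{lm:signend} then delivers ${\tt sign}(q(\kappa^*;a))=-{\tt sign}(\pd{q}{x_1}(\kappa^*;x_1^{(1)}))$. Combining this with the stability criterion of Lemma \ref{lm:jactildeg} (a nondegenerate positive steady state is stable if and only if $\pd{q}{x_1}(\kappa^*;x_1^{(1)})<0$), I conclude that $x^{(1)}$ is stable precisely when $q(\kappa^*;a)>0$.

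The final step is the parity count. With strictly alternating signs of $\pd{q}{x_1}$ across the $2N+1$ simple roots, the stable states are either exactly the odd-indexed ones $\{x^{(1)},x^{(3)},\ldots,x^{(2N+1)}\}$, of size $N+1$, when $x^{(1)}$ is stable, or exactly the even-indexed ones $\{x^{(2)},x^{(4)},\ldots,x^{(2N)}\}$, of size $N$, when $x^{(1)}$ is unstable. To close the argument I would invoke the closed-form identity established at the end of the proof of Lemma \ref{lm:bdm}, namely that the left-hand side of \eqref{eq:bwd'} equals $q(\kappa^*;a)$ (using $(\beta_{11}-\alpha_{11})\lambda_j=\beta_{1j}-\alpha_{1j}$ from \eqref{eq:scalar} and the definition \eqref{eq:mathcalc} of ${\mathcal C}_j$). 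Thus \eqref{eq:bwd'} holds exactly when $q(\kappa^*;a)>0$, which is exactly when $x^{(1)}$ is stable, i.e.\ when there are $N+1$ stable steady states; and the failure of \eqref{eq:bwd'}, which under $q(\kappa^*;a)\neq 0$ forces $q(\kappa^*;a)<0$, corresponds to $N$ stable states. I do not expect a genuine obstacle here: all of the analytic content—simplicity of the roots, strict sign alternation, nonvanishing of $q$ at the endpoint, and the explicit evaluation of $q(\kappa^*;a)$—is already supplied by the preceding lemmas, so the only care required is to track the index parity correctly and to note that nondegeneracy is exactly what converts the weak inequalities of Lemma \ref{lm:uni} and Corollary \ref{cry:sign-stable} into the strict alternation that makes the count exact rather than merely an upper bound.
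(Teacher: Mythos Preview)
Your proposal is correct and follows essentially the same route as the paper: the paper's proof simply states that under nondegeneracy the weak alternation \eqref{eq:lmbdm} in the proof of Lemma~\ref{lm:bdm} becomes strict, and then refers back to that argument, which is exactly what you have unpacked in detail. One small point worth making explicit is that to apply Lemma~\ref{lm:signend} you need $x_1^{(1)},\ldots,x_1^{(2N+1)}$ to be \emph{all} the roots of $q(\kappa^*;x_1)$ in $I$, not just those in $\bigcap_{i=1}^s I_i$; this follows from Lemma~\ref{lm:tinterval} combined with $cap_{pos}(G)=2N+1$, rather than from Lemma~\ref{lm:qroot} alone.
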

\begin{proof}
Notice that if for $(\kappa^*, c^*)$, all positive steady states of $G$ are nondegenerate, then the equality \eqref{eq:lmbdm} in the proof of Lemma \ref{lm:bdm} can be replaced with
\begin{align*}
{\tt sign}(\pd{q}{x_1}(\kappa^*;x_1^{(i)})){\tt sign}(\pd{q}{x_1}(\kappa^*;x_1^{(i-1)}))<0.
\end{align*}
Then, the conclusion follows from a similar argument with the proof of Lemma \ref{lm:bdm}.
\end{proof}

\textcolor{black}{
{\bf Proof of Theorem \ref{thm:bdm}.}}
\begin{proof}
(a) 
If $cap_{pos}(G)=2N$ $(0\leq N< +\infty)$, then by Theorem \ref{thm:nc}, we have $cap_{nondeg}(G)=2N$. Thus, there exists a choice of $(\kappa^*, c^*)$ such that $G$ has exactly $2N$ nondegenerate positive steady states in ${\mathcal P}_{c^*}$ (note here, there is no degenerate positive steady states since $cap_{pos}(G)=2N$). By Lemma \ref{lm:stable}, Lemma \ref{lm:jh}, and Theorem \ref{thm:sign},
$N$ of these $2N$ positive steady states are stable. Note that by Corollary \ref{cry:sign-stable} (b) and the fact that $cap_{pos}(G)=2N$, we have $cap_{stab}(G)\leq N$. Therefore, we conclude $cap_{stab}(G)= N$.

(b) Assume $cap_{pos}(G)=2N+1$ $(0\leq N<+\infty)$.
Notice that  by Corollary \ref{cry:sign-stable} (b) and by the fact that $cap_{pos}(G)=2N+1$,
we have
\begin{align}\label{eq:proofbdm}
cap_{stab}(G)\leq N+1.
\end{align}
 We have the following two cases.
\begin{itemize}
\item[] ({\bf Case 1.}) If ${\mathcal W}\cap{\mathcal B}\neq \emptyset$, then
by Corollary \ref{cry:ww'}, ${\mathcal W}_{nondeg}\cap{\mathcal B}\neq \emptyset$ (recall the definition of ${\mathcal W}_{nondeg}$ in \eqref{eq:witness'}).
That means there exists a choice of parameters $(\kappa^*, c^*)$ such that
$G$ has $2N+1$ nondegenerate positive steady states and the condition \eqref{eq:bwd'} holds (recall that the inequality stated in the definition of ${\mathcal B}$ \eqref{eq:bwd} is exactly the condition \eqref{eq:bwd'}).
By Corollary \ref{cry:lmbdm},  there are $N+1$ stable positive steady states. So, by \eqref{eq:proofbdm}, we have $cap_{stab}(G)= N+1$.
\item[] ({\bf Case 2.})
 Suppose ${\mathcal W}\cap{\mathcal B}= \emptyset$.
 By \eqref{eq:proofbdm}, it is sufficient to prove that $cap_{stab}(G)\leq N$. For any $(\kappa^*,c^*)$, if $(\kappa^*,c^*)\notin{\mathcal W}$, then $G$ has at most $2N$ positive steady states. By Corollary \ref{cry:sign-stable} (b), at most $N$ positive steady states are stable. If $(\kappa^*,c^*)\in{\mathcal W}$, then $G$ has $2N+1$ positive steady states.
 Note that $(\kappa^*,c^*)\notin{\mathcal B}$ since ${\mathcal W}\cap{\mathcal B}= \emptyset$. By Corollary \ref{cry:sign-stable} (b) and Lemma \ref{lm:bdm}, at most $N$ positive steady states are stable. Therefore, we conclude that $cap_{stab}(G)\leq N$.
\end{itemize}
\end{proof}

\textcolor{black}{
{\bf Proof of Corollary \ref{cry:bdm}.}}
\begin{proof}
If $cap_{pos}(G)=2N$ $(0\leq N< +\infty)$, then the conclusion follows from the proof of the part (a) in Theorem \ref{thm:bdm}.

If $cap_{pos}(G)=2N+1$ $(0\leq N<+\infty)$, we have the following two cases.
\begin{itemize}
\item[] ({\bf Case 1.}) If ${\mathcal W}\cap{\mathcal B}\neq \emptyset$, then
the conclusion follows from {\bf Case 1} in the  proof of Theorem \ref{thm:bdm}.
\item[] ({\bf Case 2.})
 If ${\mathcal W}\cap{\mathcal B}= \emptyset$, then by Theorem \ref{thm:bdm} (b), $cap_{stab}(G)=N$.
 Note that by Theorem \ref{thm:nc}, $cap_{nondeg}(G)=2N+1$, and hence,
 there exists a choice of parameters $(\kappa^*, c^*)$ such that $G$ has exactly $2N+1$ nondegenerate positive steady states in ${\mathcal P}_{c^*}$. Since ${\mathcal W}\cap{\mathcal B}= \emptyset$, we know the condition \eqref{eq:bwd'} does not hold for $(\kappa^*, c^*)$ (again, recall the the inequality stated in the definition of ${\mathcal B}$ \eqref{eq:bwd} is the condition \eqref{eq:bwd'}). By Corollary \ref{cry:lmbdm}, $N$ of these positive steady states are stable.
 \end{itemize}
\end{proof}
 \begin{lemma}\label{lm:bd}
 \textcolor{black}{
 Given a network $G$ \eqref{eq:network} with two reactions,
  suppose $cap_{pos}(G)=2N+1$ $(0\leq N<+\infty)$. If
 for a rate-constant vector $\kappa^*$ and a total-constant vector $c^*$, $G$ has $2N+1$ positive
 steady states in ${\mathcal P}_{c^*}$, and if $N+1$ of these steady states are stable, then
 \begin{align}\label{eq:bd'}
 (\gamma_{\tau1}-\gamma_{\tau2})(\beta_{\tau1}-\alpha_{\tau1})<0,
 \end{align}
 where $\tau$ and $\gamma_{\tau j}$ $(j\in \{1,2\})$ are  defined in \eqref{eq:setk} and \eqref{eq:gamma} for the total constant-vector $c^*$. Especially, if $\tau$ is the only index in $[\tau]$, then
 the condition \eqref{eq:bd'} becomes \eqref{eq:sbd}, i.e.,
 $(\alpha_{\tau1}-\alpha_{\tau2})(\beta_{\tau1}-\alpha_{\tau1})<0$.
 }\end{lemma}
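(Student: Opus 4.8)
The plan is to derive \eqref{eq:bd'} directly from Lemma \ref{lm:bdm} by exploiting that the network has only $m=2$ reactions, so that the condition \eqref{eq:bwd'} collapses to a single term whose sign is easy to read. First I would invoke Lemma \ref{lm:bdm}: since $G$ has $2N+1$ positive steady states of which $N+1$ are stable, the inequality \eqref{eq:bwd'} holds at $(\kappa^*,c^*)$. The entire task then reduces to rewriting \eqref{eq:bwd'} in the claimed form, which is a matter of tracking signs.

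Next I would analyze the index set ${\mathcal L}=\{j\mid\gamma_{\tau j}=0\}$ appearing in \eqref{eq:bwd'}. Because $\tau\in{\mathcal H}$, the definition \eqref{eq:nindexi} forces $\gamma_{\tau 1},\ldots,\gamma_{\tau m}$ not to be all equal; with $m=2$ this means $\gamma_{\tau 1}\neq\gamma_{\tau 2}$. Since every $\gamma_{\tau j}\geq 0$ by \eqref{eq:gamma}, with equality exactly at the minimizer, precisely one of $\gamma_{\tau 1},\gamma_{\tau 2}$ vanishes, so ${\mathcal L}=\{\ell\}$ is a singleton with $\ell=\argmin_{j}\gamma_{\tau j}$. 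The sum in \eqref{eq:bwd'} then retains only its $j=\ell$ term. In that term $\kappa^*_\ell>0$, each $|A_i|^{\alpha_{i\ell}}>0$ for $i\in{\mathcal J}$, each $B_i^{\alpha_{i\ell}}>0$ for $i\notin{\mathcal J}$ (by Lemma \ref{rmk:mathcalc}, using that $G$ has a positive steady state), and each factor $\frac{B_k}{|A_k|}-\frac{A_k}{|A_k|}\frac{B_\tau}{A_\tau}=Y_k(-\tfrac{B_\tau}{A_\tau})>0$ for $k\in{\mathcal H}\setminus\{\tau\}$ (by Lemma \ref{lm:well-defined}(2)). Hence \eqref{eq:bwd'} is equivalent to $\beta_{1\ell}-\alpha_{1\ell}>0$, which by \eqref{eq:scalar} reads $\lambda_\ell(\beta_{11}-\alpha_{11})>0$.

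It then remains to translate $\lambda_\ell(\beta_{11}-\alpha_{11})>0$ into \eqref{eq:bd'}, and this rests on two sign facts. Since $A_\tau>0$ in \eqref{eq:setk}, formula \eqref{eq:b} gives ${\tt sign}(\beta_{\tau 1}-\alpha_{\tau 1})={\tt sign}(\beta_{11}-\alpha_{11})$. And I would establish ${\tt sign}(\lambda_\ell)=-{\tt sign}(\gamma_{\tau 1}-\gamma_{\tau 2})$: if $\ell=1$ then $\gamma_{\tau 1}<\gamma_{\tau 2}$ and $\lambda_1=1>0$, while if $\ell=2$ then $\gamma_{\tau 2}<\gamma_{\tau 1}$ and $\lambda_2<0$. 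The inequality $\lambda_2<0$ is the one genuinely new input, and the point I expect to require the most care: the first coordinate of any positive steady state is a root of $q(\kappa^*;x_1)=(\beta_{11}-\alpha_{11})(\kappa^*_1 P_1(x_1)+\lambda_2\kappa^*_2 P_2(x_1))$ lying in $I$ (Lemma \ref{lm:qroot}), where both $P_1,P_2>0$ (Lemma \ref{lm:ysign} and Lemma \ref{rmk:mathcalc}); since $\lambda_1=1>0$ and $\kappa^*_1,\kappa^*_2>0$, cancellation at such a root forces $\lambda_2<0$. Combining the two sign facts with $\lambda_\ell(\beta_{11}-\alpha_{11})>0$ yields ${\tt sign}(\beta_{\tau 1}-\alpha_{\tau 1})=-{\tt sign}(\gamma_{\tau 1}-\gamma_{\tau 2})$, which is exactly \eqref{eq:bd'}. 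Finally, for the special case $[\tau]=\{\tau\}$ I would simply substitute $\gamma_{\tau j}=\alpha_{\tau j}-\varphi_\tau$ from \eqref{eq:gamma}, so that $\gamma_{\tau 1}-\gamma_{\tau 2}=\alpha_{\tau 1}-\alpha_{\tau 2}$ and \eqref{eq:bd'} becomes \eqref{eq:sbd}.
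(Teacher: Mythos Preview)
Your proof is correct and follows essentially the same route as the paper's: reduce \eqref{eq:bwd'} to a single term via $|{\mathcal L}|=1$, verify positivity of all auxiliary factors through Lemmas~\ref{rmk:mathcalc} and~\ref{lm:well-defined}(2), and extract the sign relation using $A_\tau>0$. The one minor difference is that the paper invokes \cite[Lemma~4.1]{Joshi:Shiu:Multistationary} both to ensure the stoichiometric subspace is one-dimensional (so that Lemma~\ref{lm:bdm} applies) and to obtain $(\beta_{11}-\alpha_{11})(\beta_{12}-\alpha_{12})<0$; your self-contained derivation of $\lambda_2<0$ from the existence of a positive steady state is a clean alternative to the second citation, but you should still add the one-dimensionality check before invoking Lemma~\ref{lm:bdm}.
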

 \begin{proof}
 By \cite[Lemma 4.1]{Joshi:Shiu:Multistationary}, if  $G$ contains only two reactions (i.e., $m=2$), and if $cap_{pos}(G)\geq 1$, then the stoichiometric subspace is one-dimensional.
 Since $G$ contains  two reactions,  the set of indices ${\mathcal  L}$ defined in \eqref{eq:L} has only one element. So, the left-hand side of the inequality \eqref{eq:bwd'} is
 \begin{align}\label{eq:proofga}
 \begin{cases}
 \left(\beta_{11}-\alpha_{11}\right)
\kappa^*_1{\mathcal C}_1 \prod \limits_{k\in {\mathcal H}, k\neq \tau}(\frac{B_k}{|A_k|}-\frac{A_k}{|A_k|}\frac{B_{\tau}}{A_{\tau}})^{\gamma_{k1}}, & \text{if}\; \gamma_{\tau1}<\gamma_{\tau2} \\
\left(\beta_{12}-\alpha_{12}\right)
\kappa^*_2{\mathcal C}_2 \prod \limits_{k\in {\mathcal H}, k\neq \tau}(\frac{B_k}{|A_k|}-\frac{A_k}{|A_k|}\frac{B_{\tau}}{A_{\tau}})^{\gamma_{k2}}, & \text{if}\; \gamma_{\tau1}>\gamma_{\tau2}
\end{cases}.
\end{align}
By Lemma \ref{lm:well-defined} (2), for any  $k\in {\mathcal H}\backslash \{\tau\}$,
we have $Y_k(-\frac{B_{\tau}}{A_{\tau}})=\frac{1}{|A_k|}(B_k-A_k\frac{B_\tau}{A_\tau})>0$.
By \cite[Lemma 4.1]{Joshi:Shiu:Multistationary}, we have $(\beta_{11}-\alpha_{11})(\beta_{12}-\alpha_{12})<0$.
By Lemma \ref{rmk:mathcalc}, we have ${\mathcal C}_j>0$ for any $j\in \{1,2\}$.
Therefore, the sign of \eqref{eq:proofga} is equal to
$-{\tt sign}(\gamma_{\tau1}-\gamma_{\tau2})(\beta_{11}-\alpha_{11})$.
Note also,
by \eqref{eq:a} and \eqref{eq:setk}, we know $A_{\tau}=\frac{\beta_{\tau1}-\alpha_{\tau1}}{\beta_{11}-\alpha_{11}}>0$.
So,  the inequality \eqref{eq:bwd'} stated in  Lemma \ref{lm:bdm} becomes
$(\gamma_{\tau1}-\gamma_{\tau2})(\beta_{\tau1}-\alpha_{\tau1})<0$ (i.e., the condition \eqref{eq:bd'}).

If there is only one index in the equivalence class $[\tau]$, then by
\eqref{eq:gamma}, we have
$(\gamma_{\tau1}-\gamma_{\tau2})=(\alpha_{\tau1}-\alpha_{\tau2})$. So,
the inequality in \eqref{eq:bd}
(i.e., the condition \eqref{eq:bd'}) becomes \eqref{eq:sbd}.
 \end{proof}

\textcolor{black}{
{\bf Proof of Corollary \ref{cry:bd}.}}
\begin{proof}
  By lemma \ref{lm:bd}, the argument is similar to the proof of Theorem \ref{thm:bdm}.
\end{proof}

\section{Discussion}\label{sec:dis}
Given a network $G$ with a one-dimensional stoichiometric subspace,
   suppose $cap_{pos}(G)=3$. We assume
 for a rate-constant vector $\kappa^*$, $G$ has $3$ nondegenerate positive
 steady states in a stoichiometric compatibility class ${\mathcal P}_{c^*}$.
 Let $q(\kappa; x_1)$ be the function defined in \eqref{eq:q} for the total-constant vector $c^*$.
From Remark \ref{rmk:nobd} and the proof of Theorem \ref{thm:bdm}, we see that the sign of
the left-hand side of the inequality in \eqref{eq:bwd} is the Brouwer degree of the function $q(\kappa^*; x_1)$
over a bounded subset of the interval $I$ defined in \eqref{eq:tinterval}.
By \eqref{eq:bwd}, we see this Brouwer degree depends on the choice of the total-constant vector
$c$ and the rate-constant vector $\kappa$.
We remark that it is proved in \cite[Theorem 3]{CFMW}, if
a dissipative network  admits no boundary steady states, then
the Brouwer degree does not depend on the choice of
$c$ and $\kappa$.
So, a natural question is for a general network $G$,  if there exist two choices
of parameters $c$ and $\kappa$ such that their corresponding Brouwer degrees have different signs but
 the network has
three nondegenerate positive steady states for both choices of parameters. To put it simply, we wonder if there exist two choices of parameters yielding three nondegenerate positive steady states such that one yields multistability but the other one does not.


\bigskip

\begin{center}
{\large\bf SUPPLEMENTARY MATERIAL}
\end{center}

Table \ref{tab:sup} lists all files at the online repository:
\url{https://github.com/ZhishuoCode/small-network}


\begin{table}[h!]
\centering
\caption{Supporting Information Files}
\label{tab:sup}
{\scriptsize
\begin{tabular}{||c c c||}
 \hline
\cellcolor[HTML]{FFCE93} Name &\cellcolor[HTML]{FFCE93} File Type & \cellcolor[HTML]{FFCE93}Results \\ [0.5ex]
 \hline\hline
 \texttt{Example.mw/.pdf} & \texttt{Maple/PDF} & Example \ref{ex:counter}/Example \ref{ex:counter2} \\
  \texttt{Plot.mw/.pdf} & \texttt{Maple/PDF} & Example \ref{ex:evenodd}/Figure \ref{fig:main1} \\
 \hline
\end{tabular}
}
\end{table}

\end{document}